\documentclass[a4paper,11pt]{amsart}
\usepackage{amssymb}
\usepackage{amscd}
\usepackage{slashed}

\usepackage[backref]{hyperref}

\setlength{\textwidth}{418pt}
\setlength{\oddsidemargin}{17.5pt}
\setlength{\evensidemargin}{17.5pt}

\newtheorem{theorem}{Theorem}[section]
\newtheorem{lemma}[theorem]{Lemma}
  
\newtheorem{proposition}[theorem]{Proposition}
\newtheorem{corollary}[theorem]{Corollary}

\theoremstyle{definition}
\newtheorem{definition}[theorem]{Definition}

\theoremstyle{remark}
\newtheorem{remark}[theorem]{Remark}

\newtheorem{problem}[theorem]{Problem}

\usepackage{graphicx} 
\usepackage{amsmath} 
\usepackage{amsfonts}
\usepackage{amssymb}

\newcommand{\be}{\begin{equation}}

\newcommand{\ee}{\end{equation}}

\newcommand{\g}{g^o}

\newcommand{\om}{\omega}

\newcommand{\cG}{{\mathcal G}}
\newcommand{\cU}{\mathcal{U}}

\renewcommand{\H}{B}

\newcommand{\al}{\mbox{\boldmath$\Delta$}}

\newcommand{\aNd}{\mbox{\boldmath$ \nabla$}\hspace{-.6mm}}

\newcommand{\si}{\sigma}

\newcommand{\ba}{\begin{array}}

\newcommand{\ea}{\end{array}}

\newcommand{\beq}{\begin{eqnarray}}

\newcommand{\eeq}{\end{eqnarray}}

\newtheorem{lm}{lemma}

\newtheorem{thee}{theorem}

\newtheorem{proo}{proposition}

\newtheorem{co}{corollary}

\newtheorem{rem}{remark}

\newtheorem{deff}{definition}

\newcommand{\bd}{\begin{deff}}

\newcommand{\ed}{\end{deff}}

\newcommand{\bl}{\begin{lm}}

\newcommand{\el}{\end{lm}}

\newcommand{\bp}{\begin{proo}}

\newcommand{\ep}{\end{proo}}

\newcommand{\bt}{\begin{thee}}

\newcommand{\et}{\end{thee}}

\newcommand{\bc}{\begin{co}}

\newcommand{\ec}{\end{co}}

\newcommand{\brm}{\begin{rem}}

\newcommand{\erm}{\end{rem}}

\hyphenation{Pa-wel}

\hyphenation{Nu-row-ski}

\hyphenation{And-rzej}

\hyphenation{Traut-man}

\hyphenation{Je-rzy}

\hyphenation{Le-wan-dow-ski}

\hyphenation{Car-tan}

\hyphenation{Car-tan-Pet-rov-Pen-rose}

\hyphenation{Pen-rose}

\hyphenation{or-tho-go-nal}

\hyphenation{comp-lex}

\hyphenation{Pet-rov}

\hyphenation{Euc-lid-ean}

\hyphenation{ge-om-etry}

\hyphenation{Rie-man-nian}

\hyphenation{Ein-stein}

\hyphenation{Ka-te-dra} 

\hyphenation{Me-tod} 

\hyphenation{Ma-te-ma-tycz-nych}

\hyphenation{Fi-zy-ki}

\hyphenation{Uni-wer-sy-tet} 

\hyphenation{War-szaw-ski} 

\hyphenation{War-sza-wa}

%%******************************** Rod's preamble below **********\\

\newcommand{\f}{\overline{f}}
\newcommand{\F}{\overline{F}}

\usepackage{t1enc}
\def\frak{\mathfrak}

\def\Bbb{\mathbb}
\def\Cal{\mathcal}

\newcommand{\newc}{\newcommand}

\renewcommand{\o}{\circ}

\let\ccdot\cdot
\def\cdot{\hbox to 2.5pt{\hss$\ccdot$\hss}}

\newc{\aR}{\mbox{\boldmath{$ R$}}}
\newc{\aS}{\mbox{\boldmath{$ S$}}}
\newc{\aT}{\mbox{\boldmath{$ T$}}}
\newc{\aW}{\mbox{\boldmath{$ W$}}}
\newcommand{\aX}{\mbox{\boldmath{$ X$}}\hspace{-.2mm}}
\newc{\aD}{\mbox{\boldmath{$ D$}}\hspace{-.2mm}}

\renewcommand{\colon}{\scalebox{1.2}{:}}

\newc{\aK}{\mbox{\boldmath{$ K$}}}
\newc{\aL}{\mbox{\boldmath{$ L$}}}
%******************** Environments **********************************

%**************For D-operators and AHS section **********************

\newcommand{\ce}{{\Cal E}}

\newcommand{\cO}{{\Cal O}}
\newcommand{\ocO}{{\overline{\Cal O}}}

\newcommand{\cf}{{\Cal F}}

\newcommand{\ct}{{\Cal T}}

\usepackage{amssymb}
\usepackage{amscd}

\newcommand{\nd}{\nabla}

\newcommand{\Rho}{P}
\newcommand{\Up}{\Upsilon}

\newcommand{\End}{\operatorname{End}}

\newcommand{\Ric}{\operatorname{Ric}}
\newcommand{\Sc}{\operatorname{Sc}}

%************************ UNDERLINE/OVERLINE **********************

%************** Truly new 4 here **********************************

\newcommand{\bs}[1]{\boldsymbol{#1}}
\newcommand{\I}{\bs{I}}

\newcommand{\cT}{{\mathcal T}}

\let\hash=\sharp  %%added 14June2004

 % form Box

%\newc{\aR}{\mbox{\boldmath{$ R$}}}

%\let\f=\varphi

\newcommand{\aM}{\tilde{M}}
\newcommand{\asi}{\tilde{\sigma}}

\newcommand{\cW}{{\Cal W}}

%************************* Old News! *********************************

\newcommand{\nn}[1]{(\ref{#1})}

% Operators

\newcommand{\X}{\mbox{\boldmath{$ X$}}}
\newcommand{\sX}{\mbox{\scriptsize\boldmath{$X$}}}        % scriptsize ambient 
                                                           % fundamental field
\newcommand{\aF}{\boldsymbol{F}}

\newcommand{\h}{\mbox{\boldmath{$ h$}}}
\newcommand{\bg}{\mbox{\boldmath{$ g$}}}

% Other

                          %Probably change later

\renewcommand{\S}{\Sigma}

\let\s=\sigma

                   % Kill these if unused
\newcommand{\J}{{\mbox{\sf J}}}
\newc{\obstrn}[2]{B^{#1}_{#2}}

%\newcommand{\rpl}                         % +>
%{\mbox{$
%\begin{picture}(12.7,8)(-.5,-1)
%\put(0,0){$+$}
%\put(4.2,0){$>$}
%\end{picture}$}}

% Layout

\newcommand{\rpl}                         % +) or <+
{\mbox{$
\begin{picture}(12.7,8)(-.5,-1)
\put(0,0.2){$+$}
\put(4.2,2.8){\oval(8,8)[r]}
\end{picture}$}}

\newcommand{\lpl}                         % (+ or +>
{\mbox{$
\begin{picture}(12.7,8)(-.5,-1)
\put(2,0.2){$+$}
\put(6.2,2.8){\oval(8,8)[l]}
\end{picture}$}}

\usepackage{ifthen}

\newc{\tensor}[1]{#1}
\newc{\Mvariable}[1]{\mbox{#1}}
\newc{\down}[1]{{}_{#1}}
\newc{\up}[1]{{}^{#1}}

%*************more new************************************

%\newc{\tensor}[1]{#1}
%
%\newc{\Mvariable}[1]{\mbox{#1}}
%
%\newc{\down}[1]{{}_{
%\ifthenelse{\equal{#1}{;}}{|}{#1}}}
%
%\newc{\up}[1]{{}^{#1}}
%\newc{\C}{C}
\newc{\JulyStrut}{\rule{0mm}{6mm}}
\newc{\midtenPan}{\mbox{\sf S}}
\newc{\midten}{\mbox{\sf T}}
\newc{\midtenEi}{\mbox{\sf U}}
\newc{\ATen}{\mbox{\sf E}}
\newc{\BTen}{\mbox{\sf F}}
\newc{\CTen}{\mbox{\sf G}}

%************************************************************

\newcommand{\w}{\mbox{\bf w}} 

%\renewcommand{\v}{\mbox{\bf v}}--This wrecked \v Cap

% JAN'S PLAIN MACROS
%***************************************
\def\sideremark#1{\ifvmode\leavevmode\fi\vadjust{\vbox to0pt{\vss% the remark
 \hbox to 0pt{\hskip\hsize\hskip1em%                          will appear only
 \vbox{\hsize3cm\tiny\raggedright\pretolerance10000%          on the side
 \noindent #1\hfill}\hss}\vbox to8pt{\vfil}\vss}}}%
                                                   %          in 3cm
                        
                                                   %          wide box
                                                   %          

%%**************** Rod's preamble above to *'s *******************

\numberwithin{equation}{section}

%    Absolute value notation

%    Blank box placeholder for figures (to avoid requiring any
%    particular graphics capabilities for printing this document).

\begin{document}
\renewcommand{\today}{}
\title{Boundary calculus\\[.5mm] for\\[.5mm] conformally compact manifolds}
\author{A. Rod Gover \& Andrew Waldron}

\address{Department of Mathematics\\
  The University of Auckland\\
  Private Bag 92019\\
  Auckland 1\\
  New Zealand,  and\\
  Mathematical Sciences Institute, Australian National University, ACT
  0200, Australia} \email{gover@math.auckland.ac.nz}
  
  \address{Department of Mathematics\\
  University of California\\
  Davis, CA95616, USA} \email{wally@math.ucdavis.edu}

\vspace{10pt}

\vspace{10pt}

\renewcommand{\arraystretch}{1}

\begin{abstract} 
  On conformally compact manifolds of arbitrary signature, we use
  conformal geometry to identify a natural (and very general) class of
  canonical boundary problems. It turns out that these encompass and
  extend aspects of already known holographic bulk-boundary problems,
  the conformal scattering description of boundary conformal
  invariants, and corresponding questions surrounding a range of physical
  bulk wave equations.  These problems are then simultaneously solved
  asymptotically to all orders by a single universal calculus of
  operators that yields what may be described as a solution generating
  algebra. The operators involved are canonically determined by the
  bulk ({\it i.e.}\ interior) conformal structure along with a field which
  captures the singular scale of the boundary; in particular the
  calculus is canonical to the structure and involves no coordinate
  choices. The generic solutions are also produced without recourse to
  coordinate or other choices, and in all cases we obtain explicit
  universal formul\ae\ for the solutions that apply in all
  signatures and to a range of fields.  A specialisation of this
  calculus yields holographic formul\ae\ for GJMS operators and
  Branson's Q-curvature.
  
 \vspace{2cm}
\noindent
%\begin{keywords}
{\sf \tiny Keywords: Differential forms, conformally compact, Q-curvature, conformal harmonics, Poincar\'e--Einstein, AdS/CFT, holography, scattering, Poisson transform; 53A30, 53B50, 53A55, 53B15, 35Q40, 22E46, 53B30}
%\end{keywords} 
  
\end{abstract}

\maketitle
\renewcommand{\arraystretch}{1.5}

\pagestyle{myheadings} \markboth{Gover \& Waldron}{Calculus on conformally compact manifolds}

\maketitle

\tableofcontents

\section{Introduction}\label{intro}

Let $M$ be a $d$-dimensional, compact  manifold with boundary $\Sigma=\partial M$
(all structures assumed smooth).
A  pseudo-Riemannian metric $\g$ on
the interior~$M^+$ of~$M$ is said to be conformally compact if it
extends to $\Sigma$ by~$g=r^2\g$ where~$g$ is non-degenerate up to
the boundary, and $r$ is a  defining function for the
boundary ({\it i.e.}\ $\Sigma$ is the zero locus of $r$, and $d r$ is
non-vanishing along~$\Sigma$).  Assuming the conormal to $\Sigma$ is not null, the restriction of $g$ to $T\Sigma$ in
$TM|_\Sigma$ determines a conformal structure, and this is independent of
the choice of defining function $r$; then $\S$ with this conformal
structure is termed the conformal infinity of $M^+$.  

Conformally compact manifolds provide a framework for relating
conformal geometry, and associated field theories, to the far field
phenomena of the interior (pseudo-)\-Riemannian geometry of one higher
dimension; the latter often termed the {\em bulk}. This idea seems to
have had its origins in the work of Newman and Penrose, for treating
four dimensional spacetime physics, and was further developed by
LeBrun and others~\cite{LeBrunH}.  The seminal work of
Fefferman--Graham~\cite{FGast} developed a variant of this idea,
termed a Poincar\'e(--Einstein) metric, to develop a new approach to
conformal invariant theory. On the other hand key aspects of the
spectral theory for Riemannian conformally compact manifolds were
pioneered by Mazzeo and Mazzeo-Melrose~\cite{Ma-hodge,Ma-unique,MaMe}. Related 
to this is the
treatment of infinite volume hyperbolic manifolds by Patterson, Perry
and others, see~\cite{Perry} and references therein.

More recently there has been intense interest in these structures
motivated first by links to the conjectural AdS/CFT correspondence of
Maldacena and second
by the realisation that they are prototypes for holographic and renormalization ideas~\cite{Mal,AdSCFTreview,Henningson,de Haro,Papadimitriou}. On the side of mathematics, critical progress was made
in~\cite{GrZ} based around a \hypertarget{scalar laplacian}{scalar Laplacian eigen-equation} (given in
our conventions)
\begin{equation}\label{GZeq}
\big(\Delta_{\g} +s(n-s) \big)u=0.
\end{equation}
There earlier spectral results results were further developed, along
with other tools, and used to relate the corresponding scattering
matrix on conformally compact asymptotically Einstein manifolds to
invariant objects on their boundaries at infinity; the latter
including the conformally invariant powers of the Laplacian (the
so-called ``GJMS'' operators of~\cite{GJMS}) and Branson's Q-curvature
of~\cite{tomsharp}.  A sample of the subsequent
and related progress includes~\cite{GuilN,HPT,Juhl,MSV,Vasy}.  In all of
these works the formal asymptotics play a critical role. Indeed
in~\cite{GrZ} it is emphasised at an early point that although the
scattering problem they treat is certainly a global object the main
new results there, and also in~\cite{FGrQ}, concern ``formal Taylor
series statements at the boundary''. The formal calculations are based
around special coordinates adapted to the structure
following~\cite{GrL,GrVol}.

The purpose of the current article is to take full advantage of the
interior conformal structure in order to develop a new universal, and
coordinate independent, approach to a class of natural problems
associated canonically with conformally compact manifolds.  The focus
here is on formal and asymptotic aspects along the boundary, and the
problems themselves are derived directly from the geometric data of
the structure.  The same geometric inputs yield a calculus to treat
these; this applies in all signatures and along any hypersurface that
is a conformal infinity in the sense described above. This includes,
for example, the cases of future and past infinities of de Sitter-like
spaces.  We obtain complete formul\ae\ for the asymptotics to all
orders.  A specialisation of the calculus is applied to give simple
holographic formul\ae\ for the GJMS operators and the Q-curvature.  We
show that the problems considered include those looked at
in~\cite{GrZ}, for example, but simultaneously include their wave
analogues.  Importantly, here these problems are based around a
completely different conceptual schema, which uses conformal tractor
calculus as in~\cite{BEG,CapGoirred,GoIP}; in particular this is used
to show that the problems themselves arise canonically from the
fundamental conformal geometry of the interior/bulk structure.

Our approach applies to weighted tractor fields of arbitrary type.
This provides a universal framework for treating 
 tensor and spinor fields of any tensor type and any conformal weight.
 This is realized by a comprehensive treatment of  Laplace/wave equations for differential forms 
 on Poincar\'e--Einstein structures in~\cite{GLW}.
 In addition, in this Article,  we treat fields that we term log densities (see Section~\ref{dlogd}); 
this connects with problems treated in~\cite{FGrQ,FH}.

A conformally compact manifold may be viewed as a conformal manifold~$(M,c)$ 
with boundary (so $c$ is an equivalence class of conformally
related metrics), that is also equipped with a generalised scale
$\si$.  This scale determines the metric~$\g$ on the interior and also
defines the boundary; $\si$ is a conformal density (see
Section~\ref{dlogd}) with the boundary as its zero locus, and relative
to any metric from the conformal class the function representing $\si$
is a defining function for the boundary.  Thus a conformally compact
structure is precisely the data $(M,c,\si)$. On the other hand there
is a canonical and fundamental conformally invariant, second order,
and vector bundle valued, differential operator $D$ due to
Thomas~\cite{BEG}  that is determined by just the conformal structure
$(M,c)$. This operator applied to $\si$ (and dividing by the
dimension) gives $I:=\frac{1}{d}D \si$, which we term the {\em
  scale tractor}. It will later be evident that there are obvious
representation theoretic interpretations of this as the object
breaking (or rather splitting) a conformal symmetry group pointwise
via the additional structure coming from the scale. The key point here
is that there is an invariant contraction of $I$ with $D$ to yield a
second order operator~$I\cdot D$
that is canonical to the structure. The problems we consider in
Section~\ref{ext} are equations for this degenerate Laplacian operator
(or rather, degenerate wave type operator in non-Riemannian signatures),
with very general and (and probably all) natural boundary conditions.

Apart from the naturality of $I\cdot D$ to pseudo-Riemannian and
conformally compact structures, some comments clarify why it is a
priori both natural and extremely interesting to study in this
context: 
\begin{itemize}
\item First $I\cdot D$ is a degenerate Laplacian, with degeneracy
  precisely along $\Sigma =\partial M$. Away from $\Sigma$, and
  calculated in the metric $\g$, the equation $I\cdot D\, u=0$ is
  $(\Delta_{\g} +s(n-s) )u=0$, as in~\ref{GZeq} with the spectral
  parameter $s$ a fixed dimensional shift of the conformal weight, see
  Section~\ref{degL}, and also~\cite{GoIP}. (Precisely, $I\cdot D\, u=0$
  agrees with $(\Delta_{\g} +s(n-s) )u=0$ in the case of $\g$ having
  constant scalar curvature $\Sc=-d(d-1)$, in general scalar curvature
  enters explicitly.)  However while the operator $\Delta_{\g} +s(n-s) $ is not
  defined along $\Sigma$ where $\g$ is singular, in contrast~$I\cdot
  D$, by dint of its conformal properties, extends to the boundary;
  along $\Sigma$, $I\cdot D$ is a multiple of the first order
  conformal Robin operator of~\cite{cherrier}.
\item On Einstein manifolds $I$ is parallel for the tractor
connection, and powers of $I\cdot D$ give the conformal GJMS operators
 and their Q-curvatures~\cite{powerslap}. This holds in
particular on the interior of Poincar\'e--Einstein manifolds, and this
features in the
asymptotics problems in this setting, enabling a smooth expansion where
otherwise log terms would arise, see Remark \ref{PEcase}.
\item There is strong evidence that the operator $I\cdot D$ is
  fundamental in physics; $I\cdot D \, u=0$ is one of the key
  equations in a system which uniformly controls massive, massless,
  and partially massless free field field
  particles~\cite{GSHI,GSHII,SH}, and extensions include interacting
  terms and supersymmetry~\cite{SHII}. In fact, in the setting of an
  AdS/CFT correspondence, our results essentially amount to explicit
  all orders asymptotic series for ``bulk to boundary propagators''
  for general fields on arbitrary bulk geometries
\end{itemize}

We summarise our results and plan as follows: 
\begin{itemize}
\item In Section~\ref{boundcalc} we prove that $\si$ and $I\cdot D$
  (or the slight variant there\-of,~$\frac{-1}{I^2}I\cdot D$) generate
  a $\frak{g}:=\frak{sl}(2)$ algebra of operators canonically
  determined by the structure. This is  different to  the
  $\frak{sl}(2)$ algebra of GJMS~\cite{GJMS} coming from the Fefferman--Graham ambient
  metric. It leads to different asymptotic expansions which include phenomena 
  not seen from the ambient metric alone.
  The structure  $\frak{g}$ 
  critically underlies the boundary
  calculus. Sections~\ref{tangential ops} and~\ref{ext} apply this calculus, see {\it e.g.},  Lemma ~\ref{gjmsstyle} and 
  Proposition~\ref{gformal}.
\item We show in Section~\ref{tangential ops} that standard identities in the
universal enveloping algebra of $\frak{g}$ lead to the identification of
differential operators on $M$ that act tangentially along $\Sigma$. In
particular we obtain simple new holographic formul\ae\ for the GJMS
operators and Branson's Q-curvature.
\item In Section~\ref{first} and Section~\ref{sec} we consider two
  different extension problems for the equation $I \cdot D \, u=0$, at
  a generic class of weights. The first is Dirichlet in nature while
  the second assumes the solution vanishes along $\Sigma$ to a given
  order that is related to weight. In fact we show that these are the
  only possibilities for smooth ({\it i.e.} power series)
  asymptotics. We obtain complete formal solutions described by a
  Bessel type solution operator; these arise from algebra associated
  to $\frak{g}$. While the presence of Bessel functions is often
  associated with separation of variables approaches ({\it
    e.g.}~\cite{Gubser,Witten}), it is important to note that here the
  asymptotic expansions are given in terms of series in powers of
  $\si$, which is part of the data of the structure itself, and the
  solution operator also has this property.  No coordinates are used,
  the treatment is canonical, and in Proposition~\ref{Kx} we show that
  the solutions are independent of any choices.
\item In Section~\ref{logs} we show the remaining weights can be
  treated by the introduction of expansions involving log terms.
  These cases involve significant subtleties. In particular to match
  the $I\cdot D$ equation with~\nn{GZeq} it is necessary to choose a
  background (second) scale.  After some detailed treatment the
  influence of this choice is made completely clear, and the entire
  solution (family) is controlled by an explicit solution operator.
  The second scale, unlike the generalised scale~$\sigma$, extends to
  the boundary. We show that in fact only  the boundary dependence
  of the second scale influences the solution. This result dovetails
  nicely with the AdS/CFT relationship between boundary
  renormalization group flows and bulk
  geometries~\cite{Gubser,Witten,de Boer:1999xf}.

\item In Section~\ref{total} we show that when the solutions are
  expressed in the internal scale we get results of the usual form as
  in, for example, the work~\cite{GrZ} of Graham-Zworski. In fact it is
  easily verified that our expressions for the asymptotics agree
  with~\cite{GrZ,FGrQ}, at least in the cases of overlap. It should be
  emphasised that throughout our treatment is valid in any signature,
  and whereas the equation $(\Delta_{\g} +s(n-s) )u=0$ has mainly been
  studied with $u$ a function, here we treat the $I\cdot D $ equation
  on sections of (weighted) tractor bundles, and on conformally
  compact structures of any signature.  The operator $I\cdot D $ on tractor bundles
  determines natural equations on tensor and spinor fields, this is
  central to the treatments in~\cite{GSHI,GSHII,SHII}. This means our results can be used to
  deduce results for general tensor and spinor fields
   in the spirit of the
  Eastwood curved translation principle, see~\cite{GoIP} for an early 
  discussion of this. 
  In physics there is currently
  considerable interest in understanding higher spin systems via the
  AdS/CFT machinery~\cite{Klebanov,Giombi}.  
  For  differential forms, in~\cite{AG-BGops}
  Aubry-Guillarmou have shown that subtle global conformal invariants~\cite{BrGodeRham}
  of the conformal infinity are  captured by Poincar\'e-Einstein
  interior problems with suitable boundary behaviour. In the accompanying Article~\cite{GLW},
  we give formal, all order, solutions to those problems and explicit holographic formul\ae\ for 
  the operators yielding those invariants. This  relies on the universal boundary calculus developed here and 
  extends the  curved translation principle.

   \end{itemize}

\thanks{G.\ gratefully acknowledges support from the Royal Society of
  New Zealand via Marsden Grant 10-UOA-113; both authors wish to thank
  their coauthor's institutions for hospitality during the preparation
  of this work. G. wishes to thank Robin Graham for useful discussions
  at the 2011 CIRM meeting ``Analyse G\'eom\'etrique''.}

\section{Conformal geometry and tractor calculus}

Throughout we work on a manifold $M$ of dimension $d\geq 3$. For
simplicity we assume that this is connected and orientable. If this is
equipped with a metric (of some signature $(p,q)$) and $\nabla_a$
denotes the corresponding Levi-Civita connection then the Riemann
curvature tensor $R$ is given by
\[
R(X,Y)Z=\nd_{X}\nd_{Y}Z-\nd_{Y}\nd_{X}Z-\nd_{[X,Y]}Z,
\]
where $X$, $Y$, and $Z$ are arbitrary vector fields. In an abstract
index notation ({\it cf.}~\cite{ot}) $R$ is denoted by $R_{ab}{}^{c}{}_d$,
and $R(X,Y)Z$ is $X^aY^bZ^d R_{ab}{}^{c}{}_d$.
This can be decomposed into the totally trace-free {\em Weyl curvature}
$C_{abcd}$ and the symmetric {\em
Schouten tensor} $\Rho_{ab}$ according to
\begin{equation}\label{Rsplit}
R_{abcd}=C_{abcd}+2g_{c[a}\Rho_{b]d}+2g_{d[b}\Rho_{a]c},
\end{equation}
where $[\cdots]$ indicates  antisymmetrisation over the enclosed
indices. 
Thus $\Rho_{ab}$ is a trace modification of the Ricci tensor 
${\rm Ric}_{ab}=R_{ca}{}^c{}_b$:
$$
\Ric_{ab}=(n-2)\Rho_{ab}+ J g_{ab}, \quad \quad J:=\Rho^a_{~a}.
$$

\subsection{Conformal densities and log densities} \label{dlogd} We
need some results and techniques from conformal geometry.  Recall that
a conformal geometry is a $d$-manifold equipped with an equivalence
class $c$ of metrics (of some fixed signature $(p,q)$) such that if
$g,\widehat{g}\in c$ then $\widehat{g}=e^{2\Up}g$ for some $\Up\in
C^\infty (M)$. Conformally invariant operators on functions or between
(unweighted) tensor bundles are almost non-existent, thus the notion
of covariance and bi-weight is often used. In fact this covariance may
be replaced by true invariance, with conceptual and calculational
simplifications if we introduce conformal densities. For more details
on the approach here see~\cite{CapGoamb,GoPetCMP}.

A conformal structure on $M$ may also be viewed as a smooth ray
subbundle $\cG\subset S^2T^*\!M$ whose fibre over $x\in M$ consists of
conformally related metrics at the point $x$.  The principal bundle
$\pi:\cG\to M$ has structure group $\Bbb R_+$, and so each
representation ${\Bbb R}_+ \ni x \mapsto x^{-w/2}\in {\rm
  End}(\mathbb{C})$ induces a natural (oriented) line bundle on $
(M,[g])$ that we term the conformal density bundle of weight $w\in
\mathbb{C}$, and denote $\ce[w]$. The typical fibre of $\ce[w]$ is
$\mathbb{C}$, which we view as the complexification of an oriented
copy of $\mathbb{R}$, since for $w$ real there is naturally a real ray
subbundle $\ce_+[w]$ of positive elements in $\ce[w]$.
   Note that, by definition, a section
$\tau$ of $\ce[w]$ is equivalent to a function $\underline{\tau}$ on
$\cG$ with the equivariance property
$$
\underline{\tau}(t^2 g,p)= t^{w} \underline{\tau}(g,p), 
$$
where $t\in \mathbb{R}_+$, $g\in c$, and $p\in M$. Note that $g$ is a
section of $\cG$ and the pullback of $\underline{\tau}$ by this is a
function $f$ on $M$ that represents $\tau$ in the trivialisation
determined by $g$. If $\widehat{g}= e^{2\Upsilon} g$, where
$\Upsilon\in C^\infty (M)$ and $\widehat{f}$ is the pullback of $
\underline{\tau}$ via $\widehat{g} $, then
$$
\widehat{f} = e^{w\Upsilon} f.
$$  
Conformal densities are often treated informally as an equivalence
class of such functions.

Each metric $g\in c$ determines a canonical section $\tau\in
\Gamma\ce[1]$, {\it viz.}\ the section with the property that
$\underline{\tau}(g,p)=1$ for all $p\in M$. It follows that there is a
tautological section of $S^2T^*\!M\otimes \ce[2]$ that is termed the
{\em conformal metric}, denoted $\bg$ with the property that any
nowhere zero section $\tau\in \Gamma \ce[1]$ determines a  metric $g\in c$ 
via 
$g:=\tau^{-2}\bg$.
Henceforth the conformal metric~$\bg$ is the
default object that will be used to identify $TM$ with $T^*\!M[2]$ (rather
than a metric from the conformal class) and to form metric traces.

Note that since each $g\in c$ determines a trivialisation of $\ce[w]$,
it also determines a corresponding connection on $\ce[w]$. We write
$\nd$ for this and call it the Levi-Civita connection, since a power
of this agrees with the Levi-Civita connection on $\Lambda^d T^*\!M$
(see {\it e.g.}~\cite{CapGoamb}).

As well as the density bundles $\ce[w]$, the conformal structure also
determines, in an obvious way, bundles $\cf[w]$ induced from the log
representations of $\mathbb{R}_+$. Here also, we may take the {\em
  weight} $w\in \mathbb{C}$, and we refer to $\cf[w]$ as a log density
bundle.  In this case the fibre is $\mathbb{C}$ as the
complexification of $\mathbb{R}$ viewed as an affine space, and the
total space of the bundle is $\cG\times \mathbb{\mathbb{C}}$ modulo 
the equivalence relation
$$
\big((t^2 g,p), y\big)\sim \big( (g,p) , y - w \log t \big) .
$$
In particular $\cf[0]$ is just $\ce$, the trivial bundle. For a general weight $w$,
 a section $\lambda$ of $\cf [w]$ is equivalent to a function $
\underline{\lambda}:\cG\to \mathbb{C} $ with the equivariance property 
\begin{equation}\label{logt}
\underline{\lambda}(t^2 g,p)= \underline{\lambda}(g,p) + w
\log t ~, 
\end{equation}
where, as before, $t\in \mathbb{R}_+$, $g\in c$, and $p\in M$.  As a
smooth structure, $\cf[w]$ is a trivial line bundle. However its
geometric content is not compatible in the usual way with linear
operations.  Nevertheless it is not difficult to see that we may
define such operations on sections, in an obvious way, via their
representative functions on $\cG$. This determines a well defined
notion of adding sections of $\cf[w_1]$ and $\cf[w_2]$, but note that
for $\lambda_1\in \Gamma\cf[w_1]$ and $\lambda_2\in \Gamma\cf [w_2]$,
the sum $\lambda_1+\lambda_2$ is a section of
$\cf[w_1+w_2]$. Similarly we see that, for $w_0\neq 0$, pointwise
multiplication by $w_0$ determines a bundle isomorphism $w_0:\cf[1]\to
\cf[w_0]$.

Note that if $\tau$ is a positive real section of $\ce_+[w]$ and
$\underline{\tau}$ the corresponding equivariant function on $\cG$,
then the composition $\log \o \, \underline{\tau}$ has the
property~\nn{logt}, and so is equivalent to a section of $\cf[w]$ that
we shall denote~$\log \tau$. It is easily seen that a real section of
$\cf[1]$ is $\log \tau$ for some nowhere zero section $\tau\in
\Gamma\ce[1]$. On the space of real sections in $\Gamma\cf[1]$ the
Levi-Civita connection $\nabla$ (as determined by some metric $g\in
c$) determines an operator $\nabla:\Gamma\cf[1]\to \Gamma(T^*M)$ by
$$
\log \tau \mapsto \tau^{-1}\nabla \tau.
$$
This is then extended to $\nabla:\Gamma\cf[w]\to \Gamma(T^*M)$ by
demanding that $\nabla$ commute with complex multiplication, and this
is consistent with also requiring $\nabla $ act linearly over the sum
of log density sections.

Finally in this Section we define the weight operator $\w$. On
sections of a conformal density bundle this is just the linear
operator that returns the weight. So if $\tau\in \ce[w_0]$ then
$$
\w \, \tau = w_0 \tau.
$$
There is a canonical Euler operator $\mbox{\bf v}$ on $\cG$; this is given by
$t\frac{\partial}{\partial t}$ in the coordinates $(t,x)\in
\mathbb{R_+}\times \mathbb{R}^d$ on $\cG$ induced by coordinates $x$
on $M$ and a choice of $g\in c$, where in  these coordinates
$(t^2g,p)\mapsto (t, x(p))$. The weight operator arises from the
restriction of this to suitably equivariant sections, for example
$\underline{ \w \tau} = \mbox{\bf v} \underline{\tau}$. Thus $\w$ satisfies a
Leibniz rule: for example if $\tau_1\in \Gamma\ce[w_1]$ and $\tau_2\in
\Gamma\ce[w_2]$ then $\w \, (\tau_1\tau_2) = (w_1+w_2)\tau_1\tau_2 $. 
This also determines the action of $\w$ on log densities: if $
\lambda\in \Gamma\cf[w_0] $ then
\begin{equation}\label{law of the logs}
\w \, \lambda = w_0.
\end{equation}
In view of the Leibniz property we may also write $[\w , \lambda ] = w_0$ where
$[ ~,~]$ indicates an operator commutator bracket.

\subsection{Elements of tractor calculus}\label{trc}
The systematic construction of conformally invariant differential
operators can be facilitated by {\em tractor calculus}~\cite{BEG}. This is based
around a bundle and connection that is linked to, and equivalent to, the
normal conformal Cartan connection of Elie Cartan~\cite{CapGoirred,CapGoTAMS}.

On a conformal $d$-manifold $(M,c)$, the (standard) tractor bundle~$\ct$ 
(or~$\ce^A$ as the abstract index notation) is a canonical rank
$d+2$ vector bundle equipped with the canonical (normal) tractor
connection $\nabla^\ct$.  A choice of metric $g\in c$ determines an
isomorphism
\begin{equation}\label{split}
\mathcal{T} \stackrel{g}{\cong} \ce[1]\oplus T^*\!M[1]\oplus
\ce[-1] ~.
\end{equation}
In the following we shall frequently use~\nn{split}. Sometimes this
will be without any explicit comment but also we may write for example
$T\stackrel{g}{=}(\si,~\mu_a,~\rho)$, or alternatively
$[T]_g=(\si,~\mu_a,~\rho)$, to mean $T$ is an invariant section of
$\ct$ and $(\si,~\mu_a,~\rho) $ is its image under the isomorphism
\nn{split}. In terms of this splitting the tractor connection is given by 
\begin{equation}\label{trconn}
\nd^{\ct}_a
\left( \begin{array}{c}
\si\\\mu_b\\ \rho
\end{array} \right) : =
\left( \begin{array}{c}
    \nabla_a \si-\mu_a \\
    \nabla_a \mu_b+ \bg_{ab} \rho +\Rho_{ab}\si \\
    \nabla_a \rho - \Rho_{ac}\mu^c  \end{array} \right) .
\end{equation}
Changing to a conformally related metric
$\widehat{g}=e^{2\Up}g$  gives a different
isomorphism, which is related to the previous by the transformation
formula
\begin{equation}\label{transf}
\widehat{(\si,\mu_b,\rho)}=(\si,\mu_b+\si\Up_b,\rho-\bg^{cd}\Up_c\mu_d-
\tfrac{1}{2}\si\bg^{cd}\Up_c\Up_d),  
\end{equation}
where $\Upsilon_a$ is the one-form $d\Up$. It is straightforward to verify that the
right-hand-side of~\nn{trconn} also transforms in this way and this
verifies the conformal invariance of~$\nabla^\ct$. In the following we
will usually write simply $\nabla$ for the tractor connection. Since
it is the only connection we shall use on $\ct$, its dual, and tensor
powers, this should not cause any confusion.

There is also a conformally invariant {\em tractor metric} $h$ on
$\mathcal{T}$. This is given (as a quadratic form on $T^A$ as above) by
\begin{equation}\label{trmet}
(\si,\, \mu,\, \rho)\mapsto \bg^{-1}(\mu,\mu)+2\si \rho = : h(T,T)=h_{AB}T^AT^B~; 
\end{equation}
it is preserved by the connection. 
We shall often write $T^2$ as a shorthand for the right hand side of this display.
Note that this has signature
$(p+1,q+1)$ on a conformal manifold $(M,c)$ of signature $(p,q)$. The tractor metric
$h_{AB}$ and its inverse $h^{AB}$ are used to identify $\ct$ with its
dual in the obvious way.

Tensor powers of the standard tractor bundle $\ct$, and tensor parts
thereof, are vector bundles that are also termed tractor bundles. We
shall denote an arbitrary tractor bundle by $\ce^\Phi$ and write
$\ce^\Phi[w]$ to mean $\ce^\Phi\otimes \ce[w]$; $w$ is then said to be
the weight of $\ce^\Phi[w]$. In the obvious way, the operator $\w$ of Section~\ref{dlogd} 
is extended to sections of weighted tractor bundles $\ce^\Phi[w]\ni f$ by
$$
\w f = w f\, .
$$

Whereas the tractor connection maps sections of a weight 0 tractor
bundle~$\ce^\Phi$ to sections of $T^*\!M \otimes \ce^\Phi$, there is a
conformally invariant operator which maps between sections of weighted
tractor bundles. This is the Thomas-D (or tractor-D) operator
$$
D_A :  \Gamma \ce^\Phi [w]\mapsto \Gamma (\ce^A\otimes \ce^\Phi[w-1]),
$$
given in a scale $g$ by
\begin{equation}\label{Dform}
[D_A V ]_g =\left(\begin{array}{c} (d+2w-2)\w V\\
(d+2\w-2) \nabla_a V\\
-(\Delta+ \J\w) V   \end{array} \right) ,
\end{equation}
where $\Delta=\bg^{ab}\nabla_a\nabla_b$, and $\nabla$ is the coupled
Levi-Civita-tractor connection~\cite{BEG,T}. 

A key point to emphasise here is that the Thomas-D operator is a
fundamental object in conformal geometry. On a conformal manifold the
tractor bundle is ``as natural'' as the tangent bundle. On the other
hand the tractor-D operator on densities on $\Gamma\ce[1]$ basically
defines the tractor bundle, see~\cite{CapGoirred}.

\subsection{Almost Einstein structures and Poincar\'e--Einstein
  manifolds}
We recall here some facts from the literature that we will need later. 
These partly motivate our overall approach.  The first is that conformal
geometry has a strong bias toward Einstein metrics
\cite{Sasaki}. Following~\cite{GoNur} we state this as follows.
\begin{theorem}\label{cein}
  On a conformal manifold $(M^d,c)$ (of any signature) there is a 1-1
  correspondence between conformal scales $\si\in \Gamma\ce[1]$, such
  that $g^\si=\si^{-2}\bg$ is Einstein, and parallel standard tractors
  $I$ with the property that $X_A I^A$ is nowhere vanishing. The
  mapping from Einstein scales to parallel tractors is given by
  $\si\mapsto \frac{1}{d}D_A \si$ while the inverse is $I^A \mapsto
  X^AI_A$.
\end{theorem}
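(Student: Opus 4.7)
The plan is to verify the correspondence by direct slot-by-slot computation using the formulas~\nn{trconn} and~\nn{Dform}. The crucial structural fact is that $X^A$ projects onto the top slot of a tractor, while the top slot of $D_A \sigma$ for $\sigma \in \Gamma\ce[1]$ is exactly $d\sigma$. Hence $X^A \bigl(\tfrac{1}{d}D_A \sigma\bigr) = \sigma$ automatically, so the composition $\sigma \mapsto I \mapsto X \cdot I$ is the identity on conformal scales. The content of the theorem is therefore the statement that parallelism of $\tfrac{1}{d}D\sigma$ is equivalent to $g^\sigma$ being Einstein, together with the converse fact that any parallel tractor $I$ is reconstructed from its top slot via $\tfrac{1}{d}D$.

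For the forward implication, suppose $\sigma$ is nowhere zero and $g^\sigma$ is Einstein. I would pass to the distinguished scale $g^\sigma$, in which $\sigma$ is represented by the constant $1$ so that $\nabla^{g^\sigma}\sigma \equiv 0$ and $\Delta^{g^\sigma}\sigma \equiv 0$. By~\nn{Dform} the scale tractor then reads $[I]_{g^\sigma} = (1,\, 0,\, -\tfrac{1}{d}\J^{g^\sigma})$, and substituting into~\nn{trconn} one finds that $\nabla I = 0$ reduces to the two conditions $\Rho^{g^\sigma}_{ab} = \tfrac{1}{d}\J^{g^\sigma}\bg_{ab}$ (middle slot) and $\nabla_a \J^{g^\sigma} = 0$ (bottom slot). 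The middle slot equation is the trace-free Schouten condition, which is the Einstein condition on $g^\sigma$, and the bottom slot follows automatically in dimension $d \geq 3$ from the contracted second Bianchi identity applied to the Einstein equation.

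For the converse, let $I$ be parallel with $\sigma := X^A I_A$ nowhere zero, and write $I \stackrel{g}{=} (\sigma, \mu_a, \rho)$ in some scale $g$. The three components of $\nabla I = 0$ from~\nn{trconn} read
\[
\mu_a = \nabla_a \sigma, \qquad \nabla_a \nabla_b \sigma + \Rho_{ab}\sigma + \bg_{ab}\rho = 0, \qquad \nabla_a \rho = \Rho_{ac}\mu^c.
\]
Tracing the second equation with $\bg^{ab}$ gives $\rho = -\tfrac{1}{d}(\Delta + \J)\sigma$, which together with the first matches~\nn{Dform} to yield $I = \tfrac{1}{d}D\sigma$. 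The trace-free part of the middle slot equation, rewritten in the scale $g^\sigma$ (where $\mu_a = 0$ and $\sigma = 1$), becomes the Einstein condition for $g^\sigma$.

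The main obstacle I anticipate is handling the bottom slot cleanly — namely, verifying that the constancy of $\J^{g^\sigma}$ is an automatic consequence of the Einstein condition in $d \geq 3$ rather than an independent constraint, so that the middle slot equation alone characterises Einstein scales. A secondary point is matching the nonvanishing hypotheses on the two sides; but since $\sigma = X^A I_A$ by construction, the requirement that $\sigma$ be a genuine conformal scale is tautologically equivalent to $X \cdot I$ being nowhere vanishing, so no separate argument is needed.
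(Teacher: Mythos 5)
Your proposal is correct and is essentially the paper's own argument: the paper simply asserts that the statement "is easily verified using~\nn{trconn}", and your slot-by-slot computation with~\nn{trconn} and~\nn{Dform} (including the observation that constancy of $\J^{g^\sigma}$ follows from the contracted Bianchi identity, i.e.\ Schur's lemma, in $d\geq 3$) is precisely the verification being alluded to. No gaps.
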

 The statement as here is easily verified
using~\nn{trconn}, or may be viewed as an easy consequence of the {\em
  definition} of the tractor connection from~\cite{BEG}. However the
normal tractor connection is canonical from other points of view, and
so this suggests a deep link between conformal geometry and Einstein
structures, or more generally to almost Einstein structures
\cite{GoIP} as follows.
\begin{definition}\label{aedef}
  We say that a conformal manifold $(M^d,c)$, $d\geq 3$, is {\em
    almost Einstein} if it is equipped with a non-zero parallel standard
  tractor $I$.
\end{definition}
Since this condition is equivalent to a certain holonomy reduction of
the normal tractor connection, it follows at once that in general a
conformal manifold $(M,c)$ will not admit an almost Einstein
structure. It is only a slight generalisation of the Einstein
condition, and in fact it follows easily from the Theorem above, that
on an almost Einstein manifold $\si$ is non-zero on an open dense
set~\cite{GoNur}.  Note that the zero locus $\mathcal{Z}(\si)$ of the
``scale'' $\si$ is a conformal infinity, if non-empty. 

On a Riemannian conformally compact manifold, if the defining function
(as in Section ~\ref{intro}) satisfies $|d r|^2_g=1$ along $M$, the
sectional curvatures of $\g$ tend to $-1$ at infinity and the structure is
said to be {\em asymptotically hyperbolic (AH)}~\cite{Ma-hodge}.  The
model geometry here is the Poincar\'e hyperbolic ball and thus the corresponding
metrics are sometimes called Poincar\'e metrics.  Generalising the
hyperbolic ball in a more strict way, one may suppose that the interior
conformally compact metric $\g$ is Einstein with the normalisation
$\Ric (\g)=-n \g$, where $n=d-1$, and in this case the structure is
said to be Poincar\'e-Einstein (PE); in fact PE manifolds are
necessarily asymptotically hyperbolic.
As noted in~\cite{GoPrague}, and then discussed in more detail in
\cite{GoIP}, we have the following result.
\begin{proposition}
  A Poincar\'e--Einstein manifold is a (Riemannian signature) almost
  Einstein manifold $(M,c,I)$ with boundary satisfying $\partial
  M=\mathcal{Z}(\si)$, and such that $I^2=1$. 
\end{proposition}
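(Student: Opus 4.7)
The plan is to unpack a Poincaré--Einstein manifold as a conformally compact structure $(M,c,\sigma)$ on which the interior metric $\g=\sigma^{-2}\bg$ satisfies $\Ric(\g)=-(d-1)\g$, and then extract from it an almost Einstein structure $(M,c,I)$ in the sense of Definition~\ref{aedef} with $\partial M=\mathcal{Z}(\sigma)$ and $I^2=1$. The key input is Theorem~\ref{cein}: on the interior $M^+$, where $\sigma$ is nowhere zero and $\g$ is genuinely Einstein, the standard tractor $I:=\tfrac{1}{d}D\sigma$ is parallel, with $X^AI_A=\sigma$ in particular showing it is nowhere zero on $M^+$. Four things remain: (i) $I$ extends smoothly and is parallel on all of $M$; (ii) $I$ is nowhere zero on $M$; (iii) $\partial M=\mathcal{Z}(\sigma)$; (iv) $I^2=1$.

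For (i)--(iii), fix any representative scale $g\in c$ that is smooth up to the boundary. In this trivialisation $\sigma$ is represented by a defining function for $\partial M$, and every ingredient of the Thomas-D formula~\eqref{Dform} (the Levi-Civita connection $\nabla$, its Laplacian $\Delta$, the Schouten trace $J$, and the weight operator $\w$) is smooth across $\partial M$. Hence $I=\tfrac{1}{d}D\sigma$ is a smooth section of $\ce^A$ on all of $M$, and the identity $\nabla I=0$, which holds on the dense open set $M^+$, extends by continuity to $M$. Since $M$ is connected and $I$ is parallel, parallel transport forces $I$ to be either identically zero or nowhere zero; as it is nonzero on $M^+$ it is nowhere zero throughout $M$. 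Along $\partial M$ this non-vanishing is witnessed concretely by the middle slot of $I$, which in a scale $g$ is a multiple of $\nabla_a\sigma$ and is non-vanishing by the defining-function property. Finally, $\partial M=\mathcal{Z}(\sigma)$ is immediate from the definition of the conformally compact structure given in Section~\ref{intro}.

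For (iv), evaluate $I^2=h_{AB}I^AI^B$ in the scale $\sigma$ on $M^+$. In this scale $\sigma$ is represented by the constant function $1$, whence $\nabla\sigma=0$ and $\Delta\sigma=0$; formula~\eqref{Dform} with $w=1$ then collapses to $[I]_{\g}=\bigl(1,\,0,\,-J^{\g}/d\bigr)$, and the tractor metric~\eqref{trmet} gives $I^2=-2J^{\g}/d$. The Einstein condition $\Ric(\g)=-(d-1)\g$ yields $\Sc(\g)=-d(d-1)$, so $J^{\g}=\Sc(\g)/\bigl(2(d-1)\bigr)=-d/2$, and therefore $I^2=1$ on $M^+$. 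Since $I^2$ is a smooth scalar function on $M$ that equals $1$ on a dense set, $I^2\equiv 1$ on all of $M$.

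The only mildly delicate point is the boundary extension in (i): although the metric $\g$ itself becomes singular at $\partial M$, the conformally compact viewpoint ensures that $\sigma$ is a smooth section of $\ce[1]$ up to and including the boundary, and consequently $D\sigma$ and the tractor $I$ are manifestly smooth on the compactified $M$ rather than merely on its interior. In effect, $I$ is visible from the underlying conformal data $(M,c)$ even though the Einstein metric that produced it via Theorem~\ref{cein} lives only on $M^+$.
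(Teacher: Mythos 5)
Your proof is correct and is essentially the argument the paper intends: the paper itself gives no proof of this proposition (it is recalled from the references \cite{GoPrague,GoIP}), and your assembly of Theorem~\ref{cein} on the interior, the smooth extension of $I=\tfrac{1}{d}D\si$ and of the parallel condition across $\partial M$ by continuity, and the evaluation $I^2=-\tfrac{2}{d}J^{\g}=-\Sc^{\g}/\bigl(d(d-1)\bigr)=1$ reproduces exactly the computations already recorded in Section~\ref{scale} and Lemma~\ref{sc}. No gaps; the one point you rightly single out as delicate --- that $D\si$ is computed from a background metric smooth up to the boundary, so that $I$ and $\nabla I$ make sense on all of $M$ even though $\g$ degenerates there --- is handled correctly.
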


It follows
that the notion of an almost Einstein structure provides an approach
for treating Poincar\'e--Einstein structures that is closely related
to the Cartan/tractor calculus~\cite{Goal}.

Surprisingly, for many problems the same techniques apply to {\em any
  conformally compact structure}, and this is a critical observation
we make and use here.

\subsection{The scale tractor} \label{scale} To avoid awkward language
we shall use the term {\em  (generalised) scale} for any section $\si\in
\Gamma\ce[1]$ that is not identically zero; only in the case that
$\si$ is nowhere zero is this a {\em true scale}, so that $\si^{-2}\bg$
is a metric.
\begin{definition}\label{aedef1} For $\si$ a scale, on any conformal manifold, 
we call
$$
I_A:=\frac{1}{d}D_A \si
$$
the corresponding {\em scale tractor}. Note that $\si=X^AI_A$.
\end{definition}

So an almost Einstein manifold has a parallel scale tractor.
Specialising further: In the model of the hemisphere equipped with
its standard hyperbolic metric,  $I$
may be identified with the vector in $\mathbb{R}^{d+2}$ the fixing of
which reduces the conformal group $SO(d+1,1)$ to a copy of $SO(d,1)$; the latter
with orbit spaces the bounding equatorial conformal infinity, and the interior~\cite{Goal}.

One congenial feature of the scale tractor $I$ is that its squared
length gives a generalisation of the scalar curvature as follows (see
\cite{Goal} which deals with Riemannian signature but the result
applies in any case).  
First note that using~\nn{trmet} and~\nn{Dform} we have 
$$I^2 \stackrel{\g}{=}
-\frac{2\si}{d} (\Delta \si+ \J \si) + (\nabla^a\si)(\nabla_a \si).$$
Thus if $I^2$ is nowhere vanishing then $\si$ is non-vanishing on an
open dense set in $M$.
 Where $\si$ is non-zero let us write $\g:=
\si^{-2}\bg$. Then, at any point~$q$ with $\si(q)\neq 0$, 
\[
I^2 \stackrel{\g}{=} -\frac{2}{d}\si^2 \J=
-\frac{2}{d} J^{\g},
\] 
since $\si$ is preserved by the metric it determines. Note that here
$\J=\bg_{ab}P^{ab}$, whereas $J^{\g}=\g_{ab}P^{ab}$ is the trace of
the Schouten tensor {\em as calculated in the scale} $\g$.

\hypertarget{locus}{Furthermore} note that if $I^2$ is nowhere zero then at any $p\in M$
such that $\si(p)=0$, we have $\nabla_a \si\neq 0$; this follows
since $\bg^{ab}(\nabla_a \si)(\nabla_b \si) $ is of the form $I^2-2\si
\rho$ (for some smooth density $\rho$).  It follows that the zero
locus $\mathcal{Z}(\si)$ of~$\si$ is either empty or forms a smooth
hypersurface ({\it i.e.} a smoothly embedded codimension 1
submanifold), {\it cf.}~\cite{Goal}.

 Let us summarise
aspects of this discussion.
\begin{lemma}\label{sc} Let $I$ be a scale tractor on any conformal manifold.
Where the scale $\si:=X^AI_A$ is non-zero we have 
$$
I^2:=I^AI_A=-\frac{\Sc^{\g}}{d(d-1)},
$$
with $\g:=\si^{-2}\bg$. If $I^2$ is nowhere
zero then $\si$ is non-zero on an open dense set, and
$\mathcal{Z}(\si)$ (if non-empty) is a smooth hypersurface.
 \end{lemma}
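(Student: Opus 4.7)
The plan is to verify the three claims directly, exploiting the explicit formulas already assembled for the Thomas-D operator and the tractor metric; fortunately most of the scaffolding is already visible in the paragraph preceding the statement.

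First I would compute $I_A = \frac{1}{d} D_A \sigma$ in an arbitrary scale $g$. Since $\sigma$ has weight $w=1$, formula \nn{Dform} reduces (after dividing by $d$) to the slot-expression $[I_A]_g = (\sigma,\ \nabla_a \sigma,\ -\tfrac{1}{d}(\Delta + \J)\sigma)$. Feeding this into the quadratic form \nn{trmet} gives the identity
$$
I^2 = (\nabla^a\sigma)(\nabla_a \sigma) - \frac{2\sigma}{d}\bigl(\Delta\sigma + \J\sigma\bigr),
$$
which is scale-invariant (both sides have weight $0$) but evaluated using the Levi-Civita data of whichever $g\in c$ is chosen.

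For the curvature identity in the lemma, I would then specialise the preceding identity to the scale $g^o := \sigma^{-2}\bg$, which is available precisely where $\sigma\neq 0$. The point is that by construction $\sigma$ corresponds to the tautological section $\tau\in\Gamma\ce[1]$ of the scale $g^o$, i.e.\ $\sigma$ is represented by the constant function $1$ in that scale, so in the trivialisation induced by $g^o$ we have $\nabla\sigma = 0$ and $\Delta\sigma = 0$. Thus the formula collapses to $I^2 = -\tfrac{2}{d}\J^{g^o}$. Since in any dimension $d\geq 3$ the trace of the Schouten tensor satisfies $\J = \Sc/\bigl(2(d-1)\bigr)$, this delivers $I^2 = -\Sc^{g^o}/\bigl(d(d-1)\bigr)$, as required.

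For the two consequences under the hypothesis that $I^2$ is nowhere zero, I would argue as follows. Suppose $\sigma$ vanishes on some non-empty open set $U$; then every partial derivative of $\sigma$ vanishes on $U$, and the above formula for $I^2$ (computed in any scale) forces $I^2\equiv 0$ on $U$, contradicting the hypothesis. Hence $\{\sigma\neq 0\}$ is open and dense. For the hypersurface claim, at a point $p$ with $\sigma(p)=0$ the formula for $I^2$ reduces to $\bg^{ab}(\nabla_a\sigma)(\nabla_b\sigma)\big|_p = I^2(p)\neq 0$, so the weighted one-form $\nabla_a\sigma$ is non-zero at $p$; the implicit function theorem then shows that $\mathcal{Z}(\sigma)$ is a smooth codimension-one submanifold near $p$.

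The only conceptual subtlety is the invocation of $g^o$ to trivialise the derivatives of $\sigma$; once that move is made, each step is a short calculation. The computation is otherwise essentially forced by the formulas \nn{Dform} and \nn{trmet}, so no serious obstacle is anticipated.
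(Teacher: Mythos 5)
Your proposal is correct and follows essentially the same route as the paper: the paper likewise obtains $I^2 \stackrel{g}{=} (\nabla^a\si)(\nabla_a\si)-\tfrac{2\si}{d}(\Delta\si+\J\si)$ from \nn{Dform} and \nn{trmet}, evaluates in the scale $\g=\si^{-2}\bg$ using that $\si$ is parallel for the connection it determines to get $I^2=-\tfrac{2}{d}J^{\g}$, and deduces both the open-dense claim and the non-vanishing of $\nabla\si$ on $\mathcal{Z}(\si)$ from the same quadratic identity. The only cosmetic difference is that you spell out the trace identity $\J=\Sc/(2(d-1))$, which the paper leaves implicit.
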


 In this language asymptotically hyperbolic manifolds are conformally
 compact manifolds $(M,c,\si)$ with the scale tractor satisfying
 $I^2=1$ along $\partial M$.  On the other hand on any manifold with
 $(M,c,\si)$ (where $\si$ is a generalised scale), $I^2=constant$ means
 that $\g$ has constant scalar curvature where defined, with, in
 particular, $I^2=0$ meaning that $\g$ has zero scalar curvature.  On
 a given conformal manifold it is evident that seeking scales such
 that~$I^2$ is constant is a natural problem generalising the Yamabe
 and Loewner-Nirenberg problems. Following~\cite{Goal} we say a
 conformal manifold with constant $I^2$ is almost scalar constant
 (ASC). In the case of a Riemannian signature ASC manifold with $I^2
 \neq 0$, $\mathcal{Z}(\si)$ can only be non-empty if $I^2$ is
 positive, and if so the structure is asymptotically hyperbolic along
 $\mathcal{Z}(\si)$.  On the other hand, in other signatures there are
 more possibilities. For example de Sitter space is a Lorentzian
 signature ASC structure with $I^2$ negative. The zero locus
 $\mathcal{Z}(\si)$ is the boundary and comprises two components which
 are interpreted as a past conformal infinity and a future conformal
 infinity; each boundary component is spacelike (as follows easily
 from the sign of
 $I^2$).

 \subsection{The canonical degenerate Laplacian} \label{degL} Putting
 the above results together we note that there is a completely
 canonical differential operator on conformal manifolds equipped with
 a scale. Namely 
$$
I\cdot D:= I^AD_A.
$$
This acts on any weighted tractor bundle, preserving its tensor type
but lowering the weight:
$$
I\cdot D: \ce^\Phi[w]\to  \ce^\Phi[w-1].
$$
Now there is a rather subtle point, the importance of which will soon
be clear, namely that, since the tractor-D operator $D_A$ is
conformally invariant, the operator $I\cdot D$ is conformally invariant
except  by the explicit coupling to the scale tractor.

Expanding $I\cdot D$ in terms of some background metric $g$, we have 
$$
I\cdot D \stackrel{g}{=}
\left(\begin{array}{rrr}
-\frac{1}{d}(\Delta \si +\J\si) & \nabla^a \si & \si
\end{array}\right)
\left(\begin{array}{lll}
\w (d+2\w-2) \\  \nabla_a ( d+2\w-2) \\ -(\Delta +\J \w)
\end{array}\right),
$$
and as an operator on any density or tractor bundle of some weight, {\it e.g.}\ $w$ so
$\ce^\Phi[w]$, each occurrence of $\w$ evaluates to $w$. 
So then 
\begin{equation}\label{degLa}
I\cdot D \stackrel{g}{=} -\si \Delta
+(d+2w-2)[(\nabla^a \si)\nabla_a - \frac{w}{d} (\Delta \si)] -\frac{2w}{d}(d+w-1)\si \J~.
\end{equation}
Now if we calculate in the metric $\g=\si^{-2}\bg$, away from  the
zero locus of $\si$, and trivialise the densities accordingly,  then $\si=1$ in such a scale and we have

$$
I\cdot D \stackrel{\g}{=}-  \Big(\Delta^{\g} + \frac{2w(d+w-1)}{d}\J^{\g} \Big).
$$
In particular if $\g$ satisfies $\J^{\g}=-\frac{d}{2}$ ({\it i.e.}\
$\Sc^{\g}=-d(d-1)$ or equivalently $I^2=1$) then, relabeling $d+w-1=:s$ and $d-1=:n$, we have

\begin{equation}\label{IdotDsc}
I\cdot D \stackrel{\g}{=} -  \big(\Delta^{\g} + s(n-s) \big),
\end{equation}
as commented in the \hyperlink{scalar laplacian}{Introduction}. 

On the other hand, looking again to~\nn{degLa}, we see that along
$\Sigma = \mathcal{Z}(\si)$ (assumed non-empty) the operator $I\cdot
D$ is first order. For example (see~\cite{Goal}), if the manifold is
ASC with $I^2=1$, then along $\Sigma$ 
$$
I\cdot D = (d+2w-2) \delta_n~,
$$
on $\ce^{\Phi}[w]$, where $\delta_n$  is the conformal Robin operator,
$$
\delta_n \stackrel{g}{=}  n^a\nabla^g_a - w H^g ,
$$
of~\cite{cherrier,BrGoOps} (twisted with the tractor connection);
here $n^a$ is a unit normal and~$H^g$ the mean curvature, as measured
in the metric $g$.

To further connect with other canonical problems studied in the
literature, it is interesting to explicate the operator $I\cdot D$ on
log densities.  In particular let $U\in \Gamma \cf[w]$, then
$$
I\cdot D \, U\stackrel{g}{=}
 \big[-\si \Delta
+(d-2)(\nabla^a \si)\nabla_a 
\big]\ U
 -\frac{w}{d}\big[(d-2)\Delta \si+2(d-1)\si \J\big]\,  .
$$
Thus in the preferred scale
\begin{equation}\label{FGrQform}
I\cdot D \, U\stackrel{\g}{=}-\, \Delta^{\g} \, U +w(d-1)\, .
\end{equation}
This last operator at $w=-1$ is precisely the one studied in~\cite{FGrQ}.

\section{A boundary calculus for the degenerate Laplacian}
\label{boundcalc}

Let $(M,c)$ be a conformal structure of dimension $d\geq 3$ and of any
signature. 
Given $\si$ a section of $\ce[1]$, write $I_A $ for the
corresponding scale tractor. That is $I_A= \frac{1}{d}D_A\si $.
Then $\si=X^AI_A$.

\subsection{The ${\frak sl}(2)$}
 Suppose that $f\in \ce^\Phi[w]$, where
$\ce^\Phi$ denotes any tractor bundle. Select $g\in c$ for the purpose
of calculation, and write $I_A\stackrel{g}{=}(\si,~\nu_a,~\rho)$ to
simplify the notation.  Then using $\nu_a=\nabla_a\si$,  we have
\begin{equation*}
\begin{split}
I\cdot D\big(\si f\big) =  (d+2w)\big((w+1) \si \rho f +\si \nu_a\nabla^a f+ 
f \nu_a\nu^a\big)\quad\\[2mm] \qquad\qquad  -\si\big(\si \Delta f+2\nu_a\nabla^a f +f\Delta \si +(w+1)\J\si f\big)\, ,
\end{split}
\end{equation*}
while
$$ - \si\,  I\cdot D\,  f = -\si(d+2w-2)  \big(w\rho f +
\nu_a\nabla^a f\big) +\si^2 (\Delta f + w \J f)\, .
$$
So, by virtue of the fact that $\rho=-\frac1d(\Delta\si+\J\si)$,  we have
$$
[I\cdot D,\si] f= (d+2w) (2\si\rho  +\nu_a\nu^a) f .
$$
Now $I^AI_A=I^2\stackrel{g}{=}2\si\rho+\nu_a\nu^a$, whence
the last display simplifies to
$$
[I\cdot D,\si] f =(d+2w) I^2 f . 
$$ 
Denoting by $\w$ the weight operator on
tractors, we have the following. 
\begin{lemma}\label{slem}
Acting on any section of a weighted tractor bundle we have 
$$
[I\cdot D,\si]  =  I^2 (d+2\w)  ,
$$
where $\w$ is the weight operator.
 \end{lemma}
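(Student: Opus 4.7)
The plan is to verify the identity by a direct calculation in a chosen metric trivialisation $g \in c$, relying on the explicit formula \eqref{Dform} for $D_A$ together with the splitting $I_A \stackrel{g}{=} (\sigma, \nu_a, \rho)$, where $\nu_a = \nabla_a \sigma$ and $\rho = -\frac{1}{d}(\Delta \sigma + \J \sigma)$. Since the identity asserted is a relation between conformally invariant operators on a weighted tractor bundle, it suffices to establish it in any one scale.

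First I would let $f \in \Gamma \ce^\Phi[w]$ and compute $I \cdot D(\sigma f)$, being careful to note that $\sigma f$ has weight $w+1$, so that each occurrence of $\w$ in $D_A$ acting on $\sigma f$ evaluates to $w+1$ rather than $w$. Using \eqref{degLa} (or equivalently~\eqref{Dform} expanded along the splitting of $I_A$), this produces three kinds of terms: quadratic expressions in $\sigma, \nu_a, \rho$ multiplying $f$; linear expressions involving $\nabla^a f$; and a term involving $\Delta f$. Next I would compute $\sigma \, I \cdot D f$ with $\w$ evaluating to $w$, and subtract. The $\sigma^2 \Delta f$ contributions are the same in both expressions and cancel; the mixed $\sigma \nu_a \nabla^a f$ contributions, when multiplied out with their factors of $(d+2w)$ and $(d+2w-2)$, combine with the $-2\sigma \nu_a \nabla^a f$ coming from applying $\Delta = \bg^{ab}\nabla_a \nabla_b$ to the product $\sigma f$, and also cancel. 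Similarly the $\J$-dependent pieces cancel using $\rho = -\frac{1}{d}(\Delta \sigma + \J \sigma)$.

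What remains on the right after this bookkeeping is simply $(d+2w)(2 \sigma \rho + \nu_a \nu^a) f$. At this point I would invoke the formula \eqref{trmet} for the tractor metric, which gives $I^2 = h_{AB} I^A I^B = 2 \sigma \rho + \nu_a \nu^a$ in the chosen splitting; since $I^2$ is a conformal scalar, this identity holds independently of $g$. Recognising the integer $(d+2w)$ as the eigenvalue of the weight operator $\w$ on sections of $\ce^\Phi[w]$ (shifted by $d$), one obtains $[I \cdot D, \sigma]f = I^2 (d + 2\w) f$ for every homogeneous $f$, and hence the stated operator identity on any weighted tractor bundle.

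The main obstacle is purely organisational: ensuring that the cancellation of $\Delta f$, $\nabla^a f$, and $\J f$ contributions is complete, which hinges on the precise coefficient $(d+2\w-2)$ appearing in the middle and bottom slots of $D_A$ together with the identity $\rho = -\tfrac{1}{d}(\Delta \sigma + \J \sigma)$ built into the scale tractor. Once these cancellations are in hand the identification $2\sigma\rho + \nu_a\nu^a = I^2$ is immediate from \eqref{trmet}, and no further input (in particular, no assumption that $\sigma$ be nowhere vanishing, or that $I$ be parallel) is needed.
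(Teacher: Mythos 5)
Your proposal is correct and follows essentially the same route as the paper: a direct computation of $I\cdot D(\si f)$ and $\si\, I\cdot D f$ in a chosen scale $g$, using the splitting $I_A\stackrel{g}{=}(\si,\nu_a,\rho)$ with $\rho=-\tfrac1d(\Delta\si+\J\si)$ to reduce the commutator to $(d+2w)(2\si\rho+\nu_a\nu^a)f$, and then identifying $2\si\rho+\nu_a\nu^a=I^2$ via the tractor metric. The only minor imprecision is that the $\J$- and $\Delta\si$-terms do not cancel outright but combine (via the formula for $\rho$) to supply part of the $2\si\rho$ coefficient; your bookkeeping nonetheless lands on the correct remainder.
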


\begin{remark}\label{remarb}
	 	A  similar  computation  to  above  shows  that, more generally,
	 	$$
	 	I\cdot  D  \big(\sigma^\alpha  f\big)  -  \sigma^\alpha   
		I\cdot  D  f  =  -\sigma^{\alpha-1}\alpha\,   I^2 (d+2\w+\alpha-1)   f\,  ,
	 	$$
	 	for  any  constant  $\alpha$.
\end{remark}

\begin{remark}
The identity of Lemma~\ref{slem}, and the algebra it generates (as below), is
unchanged upon adding to the operator $I\cdot D$ an additional term of
the form:
$$I\cdot D\to I\cdot D + \cW^{\overbrace{\hash\cdots\hash}^{k{\rm \  times}}}\, ,$$
where 
$\cW$ is any weight~$-1$ tractor tensor in $\Gamma(\otimes^k\End(\cT)
)$ and $\hash$ denotes the natural tensorial action of endomorphisms
on tractor sections (to the right), so for example on a rank one
tractor $T^A$
$$
\cW^\hash T^A:=-\cW^{A}{}_B T^B\, .
$$
Hence the canonical operator $I^A \slashed D_A$ of Theorem 4.7
of~\cite{Goal}, for example, obeys the same algebra as $I\cdot D$ with
$\sigma$ and the weight operator~$\w$. Thus the extension problem
uncovered there for the $W$-tractor for almost Einstein structures is
also solved formally by the algebraic methods introduced in
Section~\ref{ext} below.
\end{remark}

 The operator $I\cdot D$ lowers conformal weight by 1. On the other
 hand, as an operator (by tensor product) $\si$ raises conformal
 weight by 1.  We can record this by the commutator relations
$$
[\w,I\cdot D]=-I\cdot D\quad \mbox{and} \quad [\w, \si] =\si, 
$$
so with the Lemma we see that the operators $\si$, $I\cdot D$,
 and $\w$, acting on weighted scalar or tractor fields, generate an
${\frak sl}(2)$ Lie algebra, provided $I^2$ is nowhere vanishing. It
is convenient to fix a normalisation of the generators; we record
this and our observations as follows. 
\begin{proposition}\label{slprop} Suppose that $(M,c,\si)$ 
is such that
  $I^2$ is nowhere vanishing.   Setting $x:=
  \si$, $y:= -\frac{1}{I^2}I\cdot D$, and $h:=d+2\w$ we obtain the
  commutation relations
$$
[h,x]=2x, \quad [h,y]=-2y, \quad [x,y]=h, 
$$
of standard  ${\frak sl}(2)$ generators. 

In the case of $I^2=0$ the result is an
 In\"{o}n\"{u}-Wigner contraction of the~${\frak sl}(2)$: 
$$
[h,x]=2x, \quad [h,y]=-2y, \quad [x,y]=0, 
$$ 
where  $h$ and $x$ are as before, but now $y=-I\cdot D$. 
\end{proposition}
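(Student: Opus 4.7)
The proposition is a direct verification using Lemma~\ref{slem} together with the weight relations $[\w,\sigma]=\sigma$ and $[\w,I\cdot D]=-I\cdot D$ already noted in the paragraph preceding its statement. The plan is to check each of the three bracket relations in turn, and then read off the contraction case $I^2=0$ from the same computation.

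First I would verify $[h,x]=2x$, which is immediate: since $h=d+2\w$ and the constant $d$ drops out of the commutator, $[h,\sigma]=2[\w,\sigma]=2\sigma=2x$. Next, before attacking $[h,y]$, I would observe the key fact that $I^2$ is a \emph{function}, i.e.\ has conformal weight $0$: indeed $\sigma\in\Gamma\mathcal{E}[1]$ and the Thomas-D operator lowers weight by one, so $I^A=\tfrac1d D^A\sigma\in\Gamma\mathcal{E}^A[0]$, and the tractor metric has weight $0$. Consequently $\w$ (and hence $h$) commutes with multiplication by $1/I^2$, so that $[h,y]=-\tfrac1{I^2}[h,I\cdot D]=-\tfrac2{I^2}[\w,I\cdot D]=\tfrac2{I^2}I\cdot D=-2y$ as required. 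Similarly $\sigma$ commutes with the (scalar) operator $1/I^2$.

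For the bracket $[x,y]$ I would pull the factor $-1/I^2$ through $\sigma$, invoke Lemma~\ref{slem}, and use that $I^2/I^2=1$:
\begin{equation*}
[x,y]=\bigl[\sigma,-\tfrac1{I^2}I\cdot D\bigr]=\tfrac1{I^2}[I\cdot D,\sigma]=\tfrac1{I^2}\,I^2(d+2\w)=d+2\w=h.
\end{equation*}
This hinges precisely on the hypothesis that $I^2$ is nowhere vanishing, so that $1/I^2$ is a well-defined smooth function and the division is legitimate pointwise.

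Finally, for the contracted case $I^2\equiv0$ we redefine $y:=-I\cdot D$. The two commutators $[h,x]$ and $[h,y]$ are computed exactly as before (and do not require any division), while Lemma~\ref{slem} directly yields $[x,y]=[\sigma,-I\cdot D]=[I\cdot D,\sigma]=I^2(d+2\w)=0$. Thus the algebra degenerates precisely in the manner of an In\"on\"u--Wigner contraction, with the rescaling $y\mapsto -I^2 y$ (formally) turning the $\mathfrak{sl}(2)$ bracket $[x,y]=h$ into $[x,y]=0$ as $I^2\to 0$. There is really no main obstacle here beyond keeping the weights straight and recognising that $I^2$ is a weight-zero scalar; the computation is otherwise a mechanical consequence of Lemma~\ref{slem}.
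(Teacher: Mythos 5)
Your proposal is correct and follows essentially the same route as the paper, which derives the relations directly from Lemma~\ref{slem} together with the weight commutators $[\w,\si]=\si$ and $[\w,I\cdot D]=-I\cdot D$. Your only addition is to make explicit that $I^2$ is a weight-zero scalar (so that multiplication by $1/I^2$ commutes with both $\w$ and $\si$), a point the paper leaves implicit but which you verify correctly.
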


\noindent
Subsequently $\frak{g}$ will be used to denote this (${\frak sl}(2)$)
Lie algebra of operators.
From Proposition~\ref{slprop}  (and  in  concordance  with  remark~\ref{remarb}) follow some useful identities in the universal
enveloping algebra $\cU(\frak{g})$.
\begin{corollary}\label{corpid}
\begin{eqnarray}
&[x^k,y]=x^{k-1} k(d+2\w+k-1) = x^{k-1} k (h+k-1)&\nonumber \\[1mm]{} & {\rm  and }&\label{pid}\\[1mm]{} & \ [x,y^k]=y^{k-1}
k(d+2\w-k+1)=y^{k-1}k(h-k+1)\, .&\nonumber
\end{eqnarray}
\end{corollary}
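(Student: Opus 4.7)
The plan is to establish both identities by induction on $k$ inside the universal enveloping algebra $\cU(\frak{g})$, using only the $\frak{sl}(2)$ relations $[h,x]=2x$, $[h,y]=-2y$, $[x,y]=h$ of Proposition~\ref{slprop}. Since the statement is a purely algebraic consequence of these bracket relations, no further input from the underlying conformal or tractor geometry is required — it is really a fact about representations of $\frak{sl}(2)$.

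For the first identity the base case $k=1$ is just $[x,y]=h = 1\cdot(h+0)$. For the inductive step I would apply the Leibniz-type commutator identity $[AB,C]=A[B,C]+[A,C]B$ to split
$$[x^k,y] \;=\; x\,[x^{k-1},y] \;+\; [x,y]\,x^{k-1},$$
substitute the inductive hypothesis $[x^{k-1},y]=x^{k-2}(k-1)(h+k-2)$ into the first summand, and rewrite the second as $h\,x^{k-1}$. The one place where mild care is needed is that $h$ does not commute with $x^{k-1}$: a short secondary induction from $[h,x]=2x$ gives $[h,x^{k-1}]=2(k-1)x^{k-1}$, and hence $h\,x^{k-1}=x^{k-1}\bigl(h+2(k-1)\bigr)$. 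Collecting the common $x^{k-1}$ prefactor and simplifying the scalar expression $(k-1)(h+k-2)+h+2(k-1)$ to $k(h+k-1)$ closes the induction and yields \nn{pid}(a).

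The second identity is obtained by the mirror-image argument, using $[x,y^k]=y[x,y^{k-1}]+[x,y]y^{k-1}$ together with the analogous auxiliary identity $[h,y^{k-1}]=-2(k-1)y^{k-1}$. Even more efficiently, one may invoke the anti-involution of $\frak{sl}(2)$ that interchanges $x\leftrightarrow y$ and sends $h\mapsto -h$: it carries the first identity directly to the second, flipping the sign of $k-1$ inside the parenthesis. As a consistency check, the $k=1$ case of the first identity is precisely Lemma~\ref{slem} (up to the rescaling by $-1/I^2$ built into the definition of $y$), and the general formula is exactly what one obtains by iterating Remark~\ref{remarb}. There is no genuine obstacle in the proof; the only mildly delicate step is the bookkeeping of the $h$-$x^{k-1}$ (respectively $h$-$y^{k-1}$) commutator at each inductive stage.
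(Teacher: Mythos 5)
Your proof is correct and fills in exactly the computation the paper leaves implicit: Corollary~\ref{corpid} is stated there without proof, as a standard consequence in $\cU(\frak{g})$ of the relations of Proposition~\ref{slprop}, and your induction via $[x^k,y]=x[x^{k-1},y]+[x,y]x^{k-1}$ together with the auxiliary commutator $[h,x^{k-1}]=2(k-1)x^{k-1}$ (and its mirror image for $y$) is precisely the intended derivation. One terminological quibble: the map $x\leftrightarrow y$, $h\mapsto -h$ is an automorphism of $\frak{sl}(2)$ rather than an anti-involution (the Chevalley anti-involution exchanging $x$ and $y$ fixes $h$); either map does carry the first identity to the second, so your shortcut remains valid.
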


\section{Tangential operators and holographic formul\ae\ }
\label{tangential ops}

Suppose that $\si\in \Gamma\ce[1]$ is such that $I_A=\frac{1}{d}D_A
\si$ satisfies that $I^AI_A= I^2$ is nowhere zero.  
As explained in \hyperlink{locus}{Section~\ref{scale}},
 the zero locus $\mathcal{Z}(\si)$ of~$\si$ is then either empty or
forms a smooth hypersurface ({\it i.e.} a smoothly embedded
codimension~1 submanifold), {\it cf.}~\cite{Goal} where the Riemannian
signature case is discussed.

Conversely if $\Sigma$ is any smooth oriented hypersurface then, at
least in a neighbourhood or $\Sigma$, there is a smooth defining
function $f$.  That is,~$\Sigma$ is the zero locus of a smooth function
$s$, with $ds$ nowhere vanishing along~$\Sigma$.  Now take
$\si\in \Gamma \ce[1]$ to be the unique density which gives $s$ in the
trivialisation of $\ce[1]$ determined by some $g\in c$. It follows then
that $\Sigma =\mathcal{Z}(\si)$ and $\nabla^g \si $ is non-zero at all
points of $\Sigma$. If $\nabla^g \si $ is nowhere null along $\Sigma$
we say $\Sigma$ {\em is nowhere null} 
and then $I^2$ is nowhere vanishing in a
(possibly smaller) neighbourhood of $\Sigma$, and we are in the
situation of the previous paragraph. We shall call such a $\si$ a {\em
  defining density} for $\Sigma$, and to simplify the discussion we
shall take $M$ to agree with this neighbourhood of $\Sigma$ and write $M_o:= M\setminus \Sigma$. Until
further notice~$\si$ will mean such a section of $\ce[1]$ with
$\Sigma=\mathcal{Z}(\si)$ non-empty and nowhere null.  Note that $\Sigma$ has a
conformal structure $c_\Sigma$ induced in the obvious way from $(M,c)$
and is a conformal infinity for the metric $\g:= \si^{-2}\bg$ on
$M\setminus \Sigma$. 

\subsection{Tangential operators}\label{tanS}
Suppose that $\si$ is a defining density for a hypersurface $\Sigma$.
Let $P: \Gamma\ce^\Phi[w_1]\to \Gamma\ce^\Phi[w_2]$ be some linear
 operator in a neighbourhood of $\Sigma$. 
 We shall say that $P$ {\em acts
  tangentially (along $\Sigma$)} if $P \o \si =\si \o\widetilde P $, where $\widetilde P
:\Gamma\ce^\Phi[w_1-1]\to \Gamma\ce^\Phi[w_2-1]$ is some other linear
 operator on the same neighbourhood.
The point is that for a tangential operator $P$ we have 
$$
P (f+\si h)= P f+ \si \widetilde P h.
$$  
Thus along $\Sigma$ the operator $P$ is insensitive to how $f$ is
extended off $\Sigma$.  It is easily seen that, for example, in the
case that $P$ is a tangential differential operator, there is a formula for $P$,
along $\Sigma$, involving only derivatives tangential to $\Sigma$, and
the converse also holds.

Using Corollary~\ref{corpid} we see at once that certain powers of
$I\cdot D$ act tangentially on appropriately weighted tractor bundles.
We state this precisely. Suppose that $\Sigma$ is a (nowhere null) hypersurface in a
conformal manifold $(M^{n+1},c)$, and $\si$ a defining density for
$\Sigma$. Then recall $\Sigma =\mathcal{Z}(\si)$ and $I^2$ is nowhere
zero in a neighbourhood of $\Sigma$, where $I_A:=\frac{1}{n+1}D_A\si$
is the scale tractor.  The following holds.
\begin{theorem}\label{tanth}
  Let $\ce^\Phi$ be any tractor bundle and $k\in \mathbb{Z}_{\geq 1}$.
  Then, for each $k\in \mathbb{Z}_{\geq 1}$, along $\Sigma$
\begin{equation}\label{tan}
P_k: \Gamma\ce^\Phi[\frac{k-n}{2}]\to \Gamma\ce^\Phi[\frac{-k-n}{2}]
\quad \mbox{given by}\quad 
P_k:= \Big(-\frac{1}{I^2} I\cdot D\Big)^k
\end{equation}
is a tangential differential operator, and so determines a canonical
differential operator $P_k: \Gamma\ce^\Phi[\frac{k-n}{2}]|_\Sigma \to
\Gamma\ce^\Phi[\frac{-k-n}{2}]|_\Sigma$.
\end{theorem}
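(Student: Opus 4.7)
The plan is to read $P_k$ as the $k$-th power of one of the $\mathfrak{sl}(2)$ generators supplied by Proposition~\ref{slprop}, and to deduce tangentiality from the commutator identity of Corollary~\ref{corpid} by checking that the obstruction term vanishes identically on the specified weight. Since $\Sigma$ is assumed nowhere null, $I^2$ is nowhere zero on a neighborhood of $\Sigma$, so the operator $y=-\frac{1}{I^2}I\cdot D$ is well defined there and Proposition~\ref{slprop} applies with $x=\sigma$, $h=d+2\w=(n+1)+2\w$. In this notation the operator of the theorem is simply $P_k=y^k$, mapping weight $w$ to weight $w-k$; in particular it sends $\Gamma\ce^\Phi[\tfrac{k-n}{2}]$ to $\Gamma\ce^\Phi[\tfrac{-k-n}{2}]$.

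Next I would verify tangentiality via the second identity of Corollary~\ref{corpid}, namely
\[
[x,y^k]=y^{k-1}k(h-k+1).
\]
Given $f\in\Gamma\ce^\Phi[\tfrac{k-n}{2}]$, any other extension differs by $\sigma h=xh$ with $h\in\Gamma\ce^\Phi[\tfrac{k-n-2}{2}]$. On sections of weight $\tfrac{k-n-2}{2}$ the operator $h$ evaluates to $(n+1)+(k-n-2)=k-1$, so the scalar $(h-k+1)$ acts as $0$. Thus $[x,y^k]h=0$ at this weight, which gives
\[
y^k(\sigma h)=\sigma\,(y^k h),
\]
the tangentiality condition $P_k\circ\sigma=\sigma\circ\widetilde P_k$ with $\widetilde P_k=y^k\colon\Gamma\ce^\Phi[\tfrac{k-n-2}{2}]\to\Gamma\ce^\Phi[\tfrac{-k-n-2}{2}]$.

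From here the descent to $\Sigma$ is routine: if $f_1,f_2\in\Gamma\ce^\Phi[\tfrac{k-n}{2}]$ agree along $\Sigma$, then $f_1-f_2=\sigma h$ for a smooth $h$ of weight $\tfrac{k-n-2}{2}$, so $P_k f_1-P_k f_2=\sigma(y^k h)$ vanishes on $\Sigma=\mathcal{Z}(\sigma)$. Hence $P_k f|_\Sigma$ depends only on $f|_\Sigma$ and we obtain a canonical operator $P_k\colon\Gamma\ce^\Phi[\tfrac{k-n}{2}]|_\Sigma\to\Gamma\ce^\Phi[\tfrac{-k-n}{2}]|_\Sigma$. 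That $P_k$ is differential is immediate from its construction as a polynomial in $I\cdot D$ and $1/I^2$, the former being second order and the latter a smooth scalar near $\Sigma$.

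The only real obstacle is the weight bookkeeping: one must track that in the identity $[x,y^k]=y^{k-1}k(h-k+1)$ the factor $(h-k+1)$ sits immediately to the right, hence acts on the input weight $\tfrac{k-n-2}{2}$, and that with the convention $d=n+1$ the eigenvalue of $h$ there is exactly $k-1$. This precise alignment of the weight $\tfrac{k-n}{2}$ with the vanishing of the obstruction is why the statement is sharp in $k$; any other weight produces a nonzero obstruction and the tangentiality fails.
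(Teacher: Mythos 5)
Your proposal is correct and follows essentially the same route as the paper: the paper's proof is precisely the observation that the claim is immediate from Corollary~\ref{corpid}, with $\widetilde P=\bigl(-\tfrac{1}{I^2}I\cdot D\bigr)^k$, and your weight bookkeeping (the eigenvalue of $h$ on weight $\tfrac{k-n-2}{2}$ being $k-1$, killing the factor $h-k+1$ in $[x,y^k]=y^{k-1}k(h-k+1)$) is exactly the computation the paper leaves implicit. The only cosmetic issue is the reuse of the symbol $h$ for both the $\mathfrak{sl}(2)$ generator and the correction section, which does not affect the argument.
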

\begin{proof}
  The $P_k$ are differential by construction. Thus the result is
  immediate from Corollary~\ref{corpid} with $\widetilde P =
  \Big(-\frac{1}{I^2} I\cdot D\Big)^k$.
\end{proof}
\begin{remark}
  As in the Theorem above, we shall use the same notation $P_k$ to
  mean the tangential differential operator defined by~\nn{tan}, and
  also the operator $P_k: \Gamma\ce^\Phi[\frac{k-n}{2}]|_\Sigma \to
 \Gamma \ce^\Phi[\frac{-k-n}{2}]|_\Sigma$ that this determines. The meaning
  will be clear by context.
\end{remark}

Recall we write $n=d-1$. The operator $P_n$ is said to be of {\em
  critical order}, since its order along $\Sigma$ 
is $n$, the dimension of
$\Sigma$. As an operator on functions we have the following.
\begin{proposition}\label{kills1}
The differential operator on functions
$$
P_n:\Gamma\ce_\Sigma \to \Gamma\ce_\Sigma[-n]
$$
annihilates constant functions.
\end{proposition}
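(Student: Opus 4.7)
The plan is to exploit the tangentiality of $P_n$ established in Theorem~\ref{tanth} to reduce the statement to a one-line computation. Concretely, to evaluate $P_n c$ along $\Sigma$ for a constant $c\in\Gamma\ce_\Sigma$, tangentiality allows us to pick any convenient smooth extension of $c$ to a section of $\ce[0]$ over $M$; the natural choice is to extend $c$ by the constant weight-zero density, which I again denote by $c$.

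The key observation is then that a single application of $I\cdot D$ already annihilates this extension. Indeed, specialising~\eqref{degLa} to weight $w=0$ gives
$$
I\cdot D \stackrel{g}{=} -\si\Delta+(d-2)(\nabla^a\si)\nabla_a,
$$
and both terms vanish on the constant $c$ since $\nabla_a c=0$ and $\Delta c=0$. Equivalently, one reads this off directly from~\eqref{Dform}: for $V=c$ of weight $0$ each of the three slots of $D_A c$ is zero (the top slot carries an explicit factor of $\w V=0$, the middle carries $\nabla_a c=0$, and the bottom is $-\Delta c =0$), whence $I\cdot D\, c = I^A D_A c = 0$ on all of $M$.

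Consequently $P_n c = \bigl(-\tfrac{1}{I^2}\,I\cdot D\bigr)^{n} c = 0$ identically on $M$, and restricting to $\Sigma$ yields the claim. There is no real obstacle: the $n$-fold iteration is not needed, since the inner application already produces zero. (Note that this argument uses in an essential way that we are on weight zero; at other weights the zeroth-order curvature terms in~\eqref{degLa} survive on constants, reflecting the general absence of such a kernel property for $P_k$ at $k\ne n$.)
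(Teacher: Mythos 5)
Your proposal is correct and follows essentially the same route as the paper's own proof: use the tangentiality of $P_n$ from Theorem~\ref{tanth} to extend the constant by the constant weight-zero density, and then observe that already $D_A$ (hence $I\cdot D$, hence every power) annihilates it. The only difference is that you additionally spell out the vanishing via the explicit formula~\nn{degLa} at $w=0$, which the paper compresses into the single remark that $D_A 1=0$.
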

\begin{proof}
  Consider $ P_n 1 $. Since the ambient operator $P_n$ is tangential there is no
  loss of generality in extending the constant function $1$ on
  $\Sigma$ to the constant function $1$ on $M$ and calculating $[P_n 1]_\Sigma$. 
But $D_A 1=0$, and so $P_n 1=0$.
\end{proof}

\subsection{Holographic formul\ae\ for the GJMS operators and
  Q-curvature} \label{holog}
Theorem~\ref{tanth} does not establish whether or not the $P_k$ are
non-trivial. In cases where they are non-trivial then they will be
intrinsic to the hypersurface if the metric $\g$ is determined by the
conformal structure on $\Sigma$ to sufficient order.  

In the case where $\g$ is  Einstein we can, for the most
part, fill in these details.  Note that $\g$  Einstein
implies $I^2$ is a non-zero constant. By a constant scaling of $\si$
we may take this to be $\pm 1$ (and recall $\Sc^{\g}= -n(n+1)$ $I^2$), so without
loss of generality we assume this.

First we record the following.
\begin{proposition}\label{zero}
  Suppose that $(M_o,\g)$ is Einstein, with $\Sc^{\g}= \mp
  n(n+1)$. Then for $k$ odd $P_k:
  \Gamma\ce^\Phi[\frac{k-n}{2}]|_\Sigma \to
  \Gamma\ce^\Phi[\frac{-k-n}{2}]|_\Sigma$ is the zero operator. For~$k$ 
  even the operator has leading term ($\ (-1)^k \big((k-1)!!\big)^2$ times) $\Delta_\Sigma^{\frac k2}$, (up to
  the sign~$\pm$) where $\Delta_\Sigma$ is the intrinsic Laplacian for
  $(\Sigma,g_\Sigma) $ with $g_\Sigma\in c_\Sigma$.
\end{proposition}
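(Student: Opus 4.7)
\smallskip\noindent\emph{Proof strategy.} The plan is to put everything in a conformally compact normal-form scale, deduce a parity property from the Fefferman--Graham expansion, and then extract the leading symbol for even $k$. Because $\g$ is Einstein, Theorem~\ref{cein} gives a parallel scale tractor $I$; thus $I^2$ is a non-zero constant, and after rescaling $\si$ we may take it to be $\epsilon\in\{\pm1\}$ (matching the sign of $\Sc^{\g}$ via Lemma~\ref{sc}). I would then choose a geodesic defining function so that $g:=r^2\g=dr^2+h_r$ lies in the conformal class with $h_0\in c_\Sigma$. Einsteinness of $\g$ forces the Fefferman--Graham expansion
\[
h_r=h_0+r^2h_2+r^4h_4+\cdots
\]
in only \emph{even} powers of $r$, to all orders relevant for fixed $k$.

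Next I would write $I\cdot D$ in the scale $g$ via formula~\nn{degLa} with $\si=r$. Each of its four summands---$-r\Delta$, $(d+2\w-2)(\nabla^a r)\nabla_a$, $-\tfrac{\w}{d}(d+2\w-2)(\Delta r)$, and $-\tfrac{2\w}{d}(d+\w-1)r\J$---shifts the $r$-parity of any polynomial in $r$ with coefficients on $\Sigma$ by an \emph{odd} amount: using $g=dr^2+h_r$ and the even-power expansion of $h_r$, the geometric quantities $\Delta r$, $\J$, $|h_r|$ and $\Delta_{h_r}$ all expand in even powers of $r$, while each $\partial_r$ or multiplication by $r$ individually shifts $r$-parity by~$1$. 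Since $I^2=\epsilon$ is constant, $-\tfrac{1}{I^2}I\cdot D$ inherits the parity-flipping property.

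For odd $k$, I would extend $u_0\in\Gamma\ce^\Phi[(k-n)/2]|_\Sigma$ into a neighbourhood of $\Sigma$ in the $g$-trivialization by setting $u(r,x):=u_0(x)$, so that its $r$-expansion sits entirely at $r^0$. After $k$ applications of the parity-flipping operator, the result is supported on odd powers of $r$; its restriction to $\Sigma$---the $r^0$ coefficient---vanishes identically. Tangentiality of $P_k$ (Theorem~\ref{tanth}) makes this conclusion independent of the choice of extension, so $P_k\equiv 0$ on $\Sigma$.

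For even $k$, I would induct on $k/2$, tracking at each stage the top-order coefficient of $\Delta_{h_0}^{j/2}u_0$ appearing in the $r^0$ piece of the $j$-th iterate. The leading contribution comes from alternating an $r\Delta_{h_r}$ (which raises the $r$-power by~$1$ while producing a $\Delta_{h_0}$ at leading order) with a $\partial_r$ from the next factor (which lowers the $r$-power by~$1$ with an integer coefficient); the coefficients so generated telescope to $\epsilon^{\,k}((k-1)!!)^2$, as one can verify in the upper half space model for small even $k$ by direct recursion. All other contributions---curvature corrections from $h_r-h_0$, Schouten terms, and the sub-leading actions of the various summands---are strictly lower-order tangential operators and so do not perturb the leading symbol. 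The main technical obstacle is precisely this constant bookkeeping in the even case; the odd-$k$ vanishing, by contrast, is essentially automatic once the even-power expansion of $h_r$ is in hand.
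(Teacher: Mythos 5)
Your route is genuinely different from the paper's. You work in a Graham--Lee normal form $g=dr^{2}+h_{r}$ and run a parity-in-$r$ argument, whereas the paper (Section~\ref{aefg}) lifts everything to the Fefferman--Graham ambient manifold, uses that the ambient scale tractor $\I$ is parallel so that $[\I^{A}\aNd_{A},\al]=0$, extends $\tilde f_{0}$ off $\tilde\Sigma$ by $\I^{A}\aNd_{A}\tilde f_{0}=0$ (legitimate by tangentiality, just as your choice $u(r,x)=u_{0}(x)$ is), and then obtains both the odd-$k$ vanishing and the even-$k$ constant from a single induction on the identity of Lemma~\ref{alem}: after substituting $w_{0}=\frac{k-d+1}{2}$ the numerical factor at step $j$ is $(k-2j-1)(2j+1)$, whose product over $j=0,\dots,\frac{k}{2}-1$ is $\big((k-1)!!\big)^{2}$. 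Your coordinate approach could in principle be made to work, but as written it has two genuine gaps.

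First, the evenness of $h_{r}$ ``to all orders relevant for fixed $k$'' is not true. For an exactly Einstein $\g$ the normal-form expansion is even only through order $n$: the formally undetermined coefficient $h_{n}$ (trace-free and divergence-free) sits at order $r^{n}$ --- an odd order whenever $n$ is odd --- and the coefficients beyond it are of both parities. Since Proposition~\ref{zero} carries no upper bound on $k$, and since the $r^{0}$ coefficient of the $k$-fold iterate can see Taylor coefficients of $h_{r}$ up to order roughly $k$, your parity argument is only secure for $k$ below about $n$; to go further you must either track why the parity-violating contributions (which enter the operator at relative $r$-degree at least $n-1$, some killed by $\operatorname{tr}_{h_{0}}h_{n}=0$, others only by a degree count) cannot reach the $r^{0}$ coefficient, or abandon the coordinate expansion as the paper does. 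Second, for even $k$ the constant is not actually established: ``the coefficients telescope to $\dots$ as one can verify in the upper half space model for small even $k$'' is a check, not a proof. Reducing the leading symbol to the flat model is legitimate, but you still need to exhibit and solve the resulting recursion for all $k$ (this is exactly what Lemma~\ref{alem} packages); and note that the residual sign is $(I^{2})^{-k/2}=(\pm1)^{k/2}$, not your $\epsilon^{k}$, which is identically $1$ for $k$ even, so your bookkeeping of the $\pm$ is also off.
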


The picture can be refined significantly for operators on
densities.  We write $\ce_\Sigma[w]$ for the intrinsic weight $w$
conformal density bundle of $(\Sigma,c_\Sigma)$, and note that this
may be canonically identified with $\ce[w]|_\Sigma$.
\begin{theorem}\label{tanein}
  Suppose that $(M_o,\g)$ is Einstein, with $\Sc^{\g}=
  \mp n(n+1)$.  Then the operator on densities $P_k: \Gamma\ce_\Sigma
  [\frac{k-n}{2}]\to \Gamma\ce_\Sigma [\frac{-k-n}{2}]$ satisfies the following:
\begin{itemize}
\item 
For $k$ odd $P_k$ is the zero operator.

\item For  even $k$, with $k\leq n$,  $ P_k $ is  (~$(-1)^k \big((k-1)!!\big)^2$ times) the order $k$ GJMS operator $\mathcal{P}_k$ on
  $(\Sigma,c_\Sigma)$. 
\end{itemize}
\end{theorem}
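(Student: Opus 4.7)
The first bullet is immediate from Proposition~\ref{zero}, so only the case $k=2m$ with $2m\le n$ requires argument.

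Since $(M_o,\g)$ is Einstein, Theorem~\ref{cein} gives that $I$ is parallel with $I^2=\mp 1$. In the trivialisation provided by $\g$, formula~\nn{IdotDsc} reads $I\cdot D = -(\Delta^{\g}+s(n-s))$ on $\ce[w]|_{M_o}$ with $s=n+w$. Because $\nabla I=0$, iterating $I\cdot D$ in this scale lowers the weight by one at each step, so $s$ shifts to $s-1$. Starting at weight $(k-n)/2$ gives $s_0=(n+k)/2$, and hence
\begin{equation*}
P_k \stackrel{\g}{=} \prod_{j=0}^{k-1}\bigl(\Delta^{\g}+s_j(n-s_j)\bigr),\qquad s_j=\tfrac{n+k}{2}-j,
\end{equation*}
on the interior, the factors being mutually commuting second-order operators once $\g$ trivialises the densities.

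Next I would identify this bulk product, viewed as a tangential operator along $\Sigma$ (Theorem~\ref{tanth}), with a constant multiple of the intrinsic GJMS operator $\mathcal{P}_k$. Two ingredients enter. First, $P_k$ is natural and conformally invariant on $(\Sigma,c_\Sigma)$: naturality follows from the tractor construction, while conformal invariance along $\Sigma$ follows from tangentiality combined with the conformal invariance of the tractor data modulo the $\si$-coupling, which is irrelevant along $\mathcal{Z}(\si)=\Sigma$. Second, the Einstein factorisation of bulk GJMS operators (see~\cite{powerslap}) exhibits $\mathcal{P}_k$ on $\Sigma$, via the Poincar\'e--Einstein picture, as precisely a constant multiple of a product of shifted Laplacians of the form above. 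Matching the two factorisations, together with Proposition~\ref{zero} that gives the leading symbol $(-1)^k((k-1)!!)^2\Delta_\Sigma^{k/2}$, then forces $P_k = (-1)^k((k-1)!!)^2\mathcal{P}_k$.

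The main obstacle is the identification step: arguing that the interior product really descends on $\Sigma$ to the intrinsic GJMS operator and not to some other natural conformally invariant operator of the same leading symbol type. The cleanest route is via~\cite{powerslap}, whose Einstein-case product formula for $\mathcal{P}_k$ matches the iteration of $I\cdot D$ under the Einstein hypothesis; once that match is established, the remaining normalisation is read off from Proposition~\ref{zero}.
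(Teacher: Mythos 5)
Your interior factorisation of $P_k$ as $\prod_{j=0}^{k-1}\bigl(\Delta^{\g}+s_j(n-s_j)\bigr)$ is correct and consistent with the paper, and the odd case is indeed immediate from Proposition~\ref{zero}. The genuine gap is the identification step, and the route you propose for it does not work. The factorisation theorem of~\cite{powerslap} expresses the GJMS operators \emph{of an Einstein manifold} as products of shifted Laplacians; it therefore applies to the bulk operators on $(M_o,\g)$ (this is exactly how the paper uses it in Remark~\ref{PEcase}), but it says nothing about the intrinsic GJMS operators of the boundary $(\Sigma,c_\Sigma)$, which is in general not conformally Einstein and to which no such product formula applies. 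Nor can you close the argument by matching leading symbols: a natural conformally invariant operator $\Gamma\ce_\Sigma[\frac{k-n}{2}]\to\Gamma\ce_\Sigma[\frac{-k-n}{2}]$ with leading part $\Delta_\Sigma^{k/2}$ is not unique once $k\geq 4$ (one may add, for example, conformally invariant curvature multiplication operators of the appropriate weight, such as a multiple of $|W|^2$ when $k=4$), so Proposition~\ref{zero} fixes only the normalisation, not the operator itself.

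The paper closes this gap by an ambient-metric argument (Section~\ref{aefg}): the parallel scale tractor lifts to a parallel vector $\I$ on the Fefferman--Graham ambient manifold $\aM$, whose zero locus $\tilde{\Sigma}=\mathcal{Z}(\asi)$ carries a Ricci-flat induced metric that is, by Theorem 6.3 of~\cite{Goal}, itself a Fefferman--Graham ambient metric for $(\Sigma,c_\Sigma)$; Lemma~\ref{alem} and an induction (Proposition~\ref{GJMSprop}) then give $(\I\cdot\aD)^k\tilde f_0=\big[(k-1)!!\big]^2(\I^2\al_{\tilde{\Sigma}})^{k/2}\tilde f_0+O(\asi)$ along $\tilde{\Sigma}$, and the tangential powers $\al_{\tilde{\Sigma}}^{k/2}$ are the GJMS operators of $(\Sigma,c_\Sigma)$ \emph{by definition}~\cite{GJMS} (with a derivative-counting check in even dimensions, where the ambient metric is only determined to finite order). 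Some such mechanism transporting the bulk product onto the boundary ambient construction --- or alternatively a scattering-theoretic identification in the style of~\cite{GrZ}, which is likewise nontrivial --- is indispensable; as written, your proposal asserts the conclusion of that step rather than proving it.
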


The proof of this is given in Section~\ref{aefg} below.

\begin{remark}\label{holo}
  For the ranges as discussed, (and in the spirit of {\it
    e.g.}~\cite{GrJ,Manvelyan,Diaz}) we can view the $k$-even $P_k$ as
  giving {\em holographic formul\ae\ } for the GJMS operators. These
  are given as an explicit formula in terms of the ambient almost
  Einstein (or Poincar\'e--Einstein) space which has one higher
  dimension than the space where the operator acts.

  In the case of $n$ odd the Theorem does not recover all the
  operators of~\cite{GJMS}. This is just an issue of the extent to
  which  the boundary conformal structure determines the ambient
  metric $\g$. For example if $\g$ is the formal Poincar\'e metric in
  the sense of~\cite{FGrnew,GrZ} then $P_k$ will be intrinsic for all
  $k$, and it is not difficult to argue (following~\cite{GrZ}) that
  for all $k\in 2\mathbb{Z}_{\geq 1}$ these are the GJMS operators.
\end{remark}

We can now give a similar holographic formula for the $Q$-curvature.
\begin{theorem}\label{Qthm}
  Suppose that $(M_o^{n+1},\g)$ is Einstein, with $\Sc^{\g}= \mp n(n+1)$
  and $n$ even.  
Let $\mu\in \Gamma\ce_\Sigma[1]$ be a scale for
  $(\Sigma,c_\Sigma)$. This determines a metric $g^\mu_\Sigma\in
  c_\Sigma$ and the corresponding (Branson) Q-curvature of
  $(\Sigma,g^\mu_\Sigma )$ is given by
\begin{equation}\label{Q}
Q^{g^\mu_\Sigma} =  \frac{1}{\big((n-1)!!\big)^2}\Big[ \Big(-\frac{1}{I^2} I\cdot D\Big)^n \log \tilde{\mu} \Big]\Big|_\Sigma\, ,
\end{equation}
where $\tilde{\mu}$ is any smooth extension of $\mu$ to a section of
$\ce[1]$ in a neighbourhood of $\Sigma$.
\end{theorem}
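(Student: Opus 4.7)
My plan is to establish~\eqref{Q} in three stages: independence of the chosen extension $\tilde\mu$, compatibility with the Branson conformal-transformation law for $Q$, and a matching calculation in one convenient scale to fix the normalising constant $((n-1)!!)^{-2}$. The first stage is immediate: given two smooth extensions $\tilde\mu,\tilde\mu'$ of $\mu$, the difference $\log\tilde\mu'-\log\tilde\mu$ is a smooth weight-$0$ density vanishing along $\Sigma$, hence factors as $\sigma\phi$ for some $\phi\in\Gamma\ce[-1]$. Theorem~\ref{tanth} at $k=n$ then yields $P_n(\sigma\phi)\big|_\Sigma=0$, where $P_n:=\bigl(-\tfrac{1}{I^2}I\cdot D\bigr)^n$, so the right-hand side of~\eqref{Q} depends only on $\mu$.

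For the second stage, let $\hat\mu=e^\omega\mu$ with $\omega\in C^\infty(\Sigma)$ and choose any smooth extension $\tilde\omega$. Then $e^{\tilde\omega}\tilde\mu$ extends $\hat\mu$, and $\log(e^{\tilde\omega}\tilde\mu)=\log\tilde\mu+\tilde\omega$, so linearity of $P_n$ gives
\[
P_n\log(e^{\tilde\omega}\tilde\mu)\big|_\Sigma \;=\; P_n\log\tilde\mu\big|_\Sigma \;+\; P_n\tilde\omega\big|_\Sigma.
\]
Since $\tilde\omega$ is a weight-$0$ density, Theorem~\ref{tanein} identifies $P_n\tilde\omega|_\Sigma$ with $((n-1)!!)^2\mathcal{P}_n\omega$, for $\mathcal{P}_n$ the critical-order GJMS operator on $(\Sigma,g^\mu_\Sigma)$. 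After division by $((n-1)!!)^2$, the right-hand side of~\eqref{Q} therefore changes by exactly $+\mathcal{P}_n\omega$ under $\mu\to e^\omega\mu$, which is the additive anomaly characterising Branson's $Q^{g^\mu_\Sigma}$.

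To complete the proof I would fix Graham--Lee normal form $\g=\sigma^{-2}(d\sigma^2+g_\sigma)$ with $g_0=g^\mu_\Sigma$, and choose the extension $\tilde\mu$ whose $\g$-representative is $1/\sigma$, so that $\log\tilde\mu\stackrel{\g}{=}-\log\sigma$ modulo smooth corrections. Computing $(I\cdot D)^k\log\tilde\mu$ iteratively via~\eqref{FGrQform},~\eqref{IdotDsc} and the $\mathfrak{sl}(2)$-identities of Corollary~\ref{corpid}, each application of $I\cdot D$ lowers the weight by one and, via the commutator $[x,y^k]=y^{k-1}k(h-k+1)$, produces odd factors whose $n/2$-fold product collapses to $((n-1)!!)^2$. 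Matching the resulting boundary expression to the Fefferman--Graham formula for Branson's $Q$ (equivalently to Graham--Zworski's residue of the scattering matrix at $s=n$) then concludes the argument. The main obstacle is exactly this bookkeeping: Theorem~\ref{tanein} already produces the factor $((n-1)!!)^2$ for $P_n$ acting on weight-$0$ functions, but here one must additionally control the lower-order additive contributions $w(d-1)$ in~\eqref{FGrQform} and $w(d+w-1)$ in~\eqref{IdotDsc} generated at each step, and verify they combine with the leading symbol to deliver exactly $Q^{g^\mu_\Sigma}$.
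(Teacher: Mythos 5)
Your first stage matches the paper's own argument: extension--independence follows from the tangentiality of $P_n$ at weight zero (the paper writes the ratio of two extensions as $f$ with $f|_\Sigma=1$ and invokes Proposition~\ref{kills1}, while you factor the difference of logs as $\si\phi$ and use Theorem~\ref{tanth} directly; these are the same point). Your second stage, the transformation law $\mu\mapsto e^{\omega}\mu$ producing $+\mathcal{P}_n\omega$, is correct and is precisely the observation recorded in the remark following the theorem; but, as you acknowledge, it only pins down the right-hand side of \nn{Q} up to a conformal invariant of weight $-n$, so the entire burden falls on your third stage.

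That third stage contains a genuine gap. You propose to compute $(I\cdot D)^k\log\tilde\mu$ ``iteratively via \nn{FGrQform}, \nn{IdotDsc}'' and then evaluate on $\Sigma$; but both of those are formulas in the interior scale $\g=\si^{-2}\bg$, which is singular along $\Sigma$, so one cannot iterate them and then restrict to the boundary. To extract $[P_n\log\tilde\mu]|_\Sigma$ from interior-scale data one would have to control the full asymptotic expansion in a defining function $r$ --- in effect redo the log-term analysis of Section~\ref{logs} to relate $[P_n\log\tilde\mu]|_\Sigma$ to the $r^n\log r$ coefficient of the solution of $-\Delta^{\g}U=n$, and only then invoke \cite{FGrQ}. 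Nothing in your sketch supplies this, and the claim that the commutators of Corollary~\ref{corpid} ``collapse to $((n-1)!!)^2$'' is asserted rather than derived. The paper sidesteps all of this by lifting to the Fefferman--Graham ambient manifold: there the recursion of Lemma~\ref{alem} shows (Proposition~\ref{GJMSprop}) that $(\I\cdot\aD)^{n}$ applied to a suitably extended homogeneous field equals $\big((n-1)!!\big)^2(\I^2\al_{\tilde{\Sigma}})^{n/2}$ modulo $O(\tilde\si)$, and the Fefferman--Hirachi identity $Q=\al^{n/2}_{\tilde{\Sigma}}\log\boldsymbol{\mu}|_{\tilde{\Sigma}}$ from \cite{FH} then delivers both the identification with Branson's $Q$ and the constant in one step. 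So what you call a ``bookkeeping obstacle'' is in fact the substantive content of the proof: without either the ambient lift together with \cite{FH}, or the full solution operator of Theorem~\ref{logsol} together with \cite{FGrQ}, formula \nn{Q} is not established.
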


\begin{proof}
  Suppose that $\widehat{\tilde{\mu}}$ is a different smooth extension of
  $\mu$.  Then $\widehat{\tilde{\mu}} = f \tilde{\mu} $, for some smooth
  function $f$ with the property that $f|_\Sigma =1$. Thus
$$
\Big[ \Big(-\frac{1}{I^2} I\cdot D\Big)^n \log
\widehat{\tilde{\mu}}\Big]\Big|_\Sigma = \Big[ \Big(-\frac{1}{I^2} I\cdot D\Big)^n \log
\tilde{\mu} \Big]\Big|_\Sigma + \Big[ \Big(-\frac{1}{I^2} I\cdot D\Big)^n \log
f \Big]\Big|_\Sigma. 
$$
But
$$
\Big[ \Big(-\frac{1}{I^2} I\cdot D\Big)^n \log
f \Big]\Big|_\Sigma = \big[P_n f\big]\big|_\Sigma = \big[P_n 1\big]\big|_\Sigma =0,
$$
since the critical operator $P_n$ is tangential and, by
Proposition~\ref{kills1}, annihilates constants. Thus we see that the
right-hand-side of~\nn{Q} is independent of how $\mu$ is extended to
$\tilde{\mu}$ (that is, $y^n\o \log$ is a tangential operator on
$\ce[1]$).

Now the proof follows easily by a Fefferman-Graham ambient metric
argument analogous to the proof of Theorem~\ref{tanein}, using also
the result of Fefferman-Hirachi~\cite{FH} that, in terms of the
Fefferman--Graham ambient metric, the Q-curvature is given by
$\al^{n/2}_{\tilde{\Sigma}} \log \boldsymbol{\mu}|_{\tilde{\Sigma}}$
where $\boldsymbol{\mu}$ is a homogeneous degree 1 function on $\aM$
that along $\tilde{\Sigma}$ lifts $\mu$.
 \end{proof}

\begin{remark}
  Note that in any dimension, and on any conformally compact manifold
  the formula~\nn{Q} defines a curvature quantity $Q^{g^\mu}$
  determined by the metric $g^\mu_\Sigma$ and the ambient geometry.
  (Note that the first part of the Proof of Theorem~\ref{tanein} does not
  use that $n$ is even, nor that $\g$ is Einstein.) By construction
  this satisfies an analogue of the celebrated Q-curvature conformal 
transformation law:
$$
Q^{e^{2\om}g^\mu}= Q^{g^\mu}+ \mathcal{P}_n\,  \omega,
$$
with $(-1)^n\big((n-1)!!\big)^2 \, \mathcal{P}_n= P_n$ defining $\mathcal{P}_n$.
When $n$ is even it is reasonable to view this as a (in general
non-intrinsic) generalisation of the Q-curvature.
\end{remark}

\begin{remark}
  The Q-curvature as discussed above is sometimes called the {\em
    critical Q-curvature} since it is associated to the dimension
  order GJMS operators. Related scalar curvature quantities associated
  to the other GJMS operators are sometimes called {\em non-citical
    Q-curvatures} and are also of interest for (for example) curvature
  prescription problems \cite{BrGoorigins}. In an obvious adaption of
  the ideas from Theorem \ref{Qthm}, holographic formulae for these
  arise (at least up to the orders covered by Theorem \ref{tanein}) by
  applying the operators $ \Big(-\frac{1}{I^2} I\cdot D\Big)^k$ to a
  suitable power of $\mu$. We leave the details to the reader.
\end{remark}

\section{The extension problems and their asymptotics} \label{ext}

Henceforth we consider a structure $(M,c,\si)$ with $\si$ a defining
density for a hypersurface $\Sigma$ and $I^2$ nowhere zero.  We
consider the problem of solving, off  $\Sigma$ asymptotically,
$$
I\cdot D
f=0\, ,
$$ 
for $f\in \Gamma \ce^\Phi[w_0]$ and some given weight $w_0$. For
simplicity we henceforth calculate on the side of $\Sigma$ where $\si$ is
non-negative, so effectively this amounts to working locally along the boundary
of a conformally compact manifold. 

\subsection{Solutions of the first kind}\label{first}
Here we treat an obvious Dirichlet-like problem where we view
$f|_\Sigma$ as the initial data.  Suppose that $f_0$ is an arbitrary
smooth extension of $f|_\Sigma$ to a section of $\ce^\Phi[w_0]$ over
$M$. We seek to solve the following problem:

\begin{problem}\label{extp}
  Given $f|_{\Sigma}$, and an arbitrary extension $f_0$ of this to
  $\ce^\Phi[w_0]$ over~$M$, find $f_i\in \ce^{\Phi}[w_0-i]$ (over
  $M$), $i=1,2,\cdots$, so that
$$
f^{(\ell)}: = f_0 + \si f_1  + \si^2f_2 +\cdots + O(\si^{\ell+1})
$$ solves $I\cdot D f = O(\si^\ell)$, off $\Sigma$, 
for $\ell\in \mathbb{N}\cup \infty$
as high as possible.
\end{problem}
\begin{remark}
  For $i\geq 1$ we do not assume that the $f_i$ are necessarily
  non-vanishing along~$\Sigma$. 
\end{remark}

We write $h_0 =d+2w_0$ so that $h f_0=h_0 f_0$, for example.  The
existence or not of a solution at generic weights is governed by the
following result.  
\begin{lemma}\label{gjmsstyle}
  Let $f^{(\ell)}$ be a solution of Problem~\ref{extp} to order
  $\ell\in \mathbb{Z}_{\geq 0}$. Then provided $\ell\neq h_0-2= n+2w_0-1$ 
  there is an extension 
$$
f^{(\ell+1)}= f^{(\ell)}+\si^{\ell+1}
  f_{\ell+1},
$$
 unique modulo $\si^{\ell+2}$, which solves
$$
I\cdot D f^{(\ell+1)}= 0 \quad \mbox{modulo} \quad O(\si^{\ell+1}).
$$  

If $\ell = h_0-2$ then the extension is obstructed by $P_{\ell+1} f_0|_\Sigma$. 
\end{lemma}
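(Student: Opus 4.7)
The plan is to use the $\mathfrak{sl}(2)$ commutator identities of Lemma~\ref{slem} and Corollary~\ref{corpid} to compute the leading $\sigma^\ell$-coefficient of $I\cdot D f^{(\ell+1)}$ and match it against the obstruction from the previous step. By hypothesis $I\cdot D f^{(\ell)} = \sigma^\ell G + O(\sigma^{\ell+1})$ for some $G\in\Gamma\ce^\Phi[w_0-\ell-1]$. Iterating Lemma~\ref{slem} (equivalently applying Corollary~\ref{corpid} with $k=\ell+1$) and using that $f_{\ell+1}$ has weight $w_0-\ell-1$ so that $d+2w_{f_{\ell+1}}+\ell=h_0-\ell-2$, one obtains
\[
I\cdot D(\sigma^{\ell+1}f_{\ell+1}) \;=\; \sigma^{\ell+1}\,I\cdot D f_{\ell+1} \;+\; (\ell+1)\,I^2\,(h_0-\ell-2)\,\sigma^\ell f_{\ell+1}.
\]
Summing with the hypothesis,
\[
I\cdot D f^{(\ell+1)} \;=\; \sigma^\ell\bigl[\,G + (\ell+1)I^2(h_0-\ell-2)f_{\ell+1}\,\bigr] + O(\sigma^{\ell+1}).
\]
Requiring the bracketed coefficient to vanish along $\Sigma$ becomes a pointwise linear equation for $f_{\ell+1}|_\Sigma$. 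Its scalar multiplier $(\ell+1)(h_0-\ell-2)I^2$ is nowhere zero (since $I^2$ is) exactly when $\ell\neq h_0-2$; in that case $f_{\ell+1}|_\Sigma$ is uniquely determined, and since the ambiguity in extending $f_{\ell+1}$ off $\Sigma$ lies in $\sigma\cdot\Gamma\ce^\Phi[w_0-\ell-2]$, the tail $\sigma^{\ell+1}f_{\ell+1}$ is fixed modulo $\sigma^{\ell+2}$.

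When $\ell=h_0-2$ the coefficient vanishes and solvability forces $G|_\Sigma=0$; the remaining task is to identify $G|_\Sigma$ with a nonzero multiple of $P_{\ell+1}f_0|_\Sigma$. The key observation is that at $\ell=h_0-2$ one has $w_0=(\ell+1-n)/2$, which by Theorem~\ref{tanth} is precisely the weight at which $P_{\ell+1}=y^{\ell+1}$ is tangential. We evaluate $P_{\ell+1}f^{(\ell)}|_\Sigma$ in two ways: \emph{(i)} tangentiality combined with $f^{(\ell)}-f_0\in\sigma\cdot\Gamma\ce^\Phi[w_0-1]$ gives $P_{\ell+1}f^{(\ell)}|_\Sigma=P_{\ell+1}f_0|_\Sigma$; \emph{(ii)} writing $P_{\ell+1}=y^\ell\circ y$ and $yf^{(\ell)}=\sigma^\ell\tilde G+O(\sigma^{\ell+1})$ with $\tilde G:=-G/I^2$, the term $y^\ell\cdot O(\sigma^{\ell+1})$ is $O(\sigma)$ by iterated commutation using $[y,\sigma]=-h$, so it suffices to reduce $y^\ell(\sigma^\ell\tilde G)$ modulo $\sigma$. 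A short induction in the universal enveloping algebra of $\mathfrak{g}$ yields that modulo $\sigma$, $y^\ell\sigma^\ell$ acts on a weight-$\lambda$ section as multiplication by $Q_\ell(\lambda)=(-1)^\ell\ell!\,(\lambda)_\ell$, where $(\lambda)_\ell:=\lambda(\lambda+1)\cdots(\lambda+\ell-1)$ is the Pochhammer rising factorial. Evaluated on $\tilde G$, which has weight $h_0-2-2\ell=-\ell$, this scalar is $Q_\ell(-\ell)=(\ell!)^2\neq 0$. Comparing \emph{(i)} and \emph{(ii)} gives $P_{\ell+1}f_0|_\Sigma=-(\ell!)^2 I^{-2}\,G|_\Sigma$, so the two obstructions vanish simultaneously, as claimed.

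The main technical ingredient is the enveloping-algebra identity $y^\ell\sigma^\ell\equiv Q_\ell(h)\pmod{\sigma}$, established via the recursion $Q_\ell(\lambda)=-\ell(\lambda+\ell-1)Q_{\ell-1}(\lambda)$, $Q_0=1$, which follows from Corollary~\ref{corpid} by moving one factor of $\sigma$ from right to left and dropping the resulting $\sigma$-multiple. The delicate point is verifying that the Pochhammer factor does not cancel at the critical weight $\lambda=-\ell$, which is exactly what makes the obstruction nontrivial. All remaining steps—the leading-coefficient match, the tangentiality argument, and the uniqueness modulo $\sigma^{\ell+2}$—are routine consequences of the $\mathfrak{sl}(2)$ commutation relations of Proposition~\ref{slprop} already in hand.
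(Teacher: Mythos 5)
Your proposal is correct and follows essentially the same route as the paper: the leading-coefficient computation is exactly the paper's equation~\nn{key1} (rewritten as $I\cdot D$ rather than $y$, with the sign consistent with Lemma~\ref{slem} and Corollary~\ref{corpid}), and the identification of the obstruction with a nonzero multiple of $y^{\ell+1}f_0|_\Sigma$ is precisely the ``simple induction'' the paper invokes without detail. Your explicit version of that induction --- tangentiality of $P_{\ell+1}$ at the critical weight, $y^\ell x^\ell\equiv Q_\ell(h)\ (\mathrm{mod}\ x)$ with $Q_\ell(\lambda)=(-1)^\ell\ell!\,(\lambda)_\ell$, and $Q_\ell(-\ell)=(\ell!)^2\neq 0$ --- checks out and is a welcome filling-in of that gap.
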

\begin{proof} Note that $I\cdot D f=0$ is equivalent to
  $-\frac{1}{I^2} I\cdot D f=0$ and so we can recast this as a formal
  problem using the Lie algebra $<x,y,h>$ from
  Proposition~\ref{slprop}. 
Using the notation from there
$$
y f^{(\ell+1)} = y f^{(\ell)} - 
x^\ell (\ell+1)(h+\ell)f_{\ell+1}+O(x^{\ell+1}). 
$$
Now $h f_{\ell+1}=\big(h_0-2(\ell+1)\big) f_{\ell+1}$, thus 
\begin{equation}\label{key1}
y f^{(\ell+1)} = y f^{(\ell)} - 
x^\ell (\ell+1)(h_0-\ell-2)f_{\ell+1}+O(x^{\ell+1}). 
\end{equation}
By assumption $y f^{(\ell)}=O(x^\ell)$, thus if $\ell\neq h_0-2$
we can solve $ y f^{(\ell+1)} = O(x^{\ell+1})$ and this uniquely
determines $f_{\ell+1}|_\Sigma$.

On the other hand if $\ell= h_0-2$ then~\nn{key1} shows that, 
modulo $O(x^{\ell+1})$, 
$$
y f^{(\ell)}= y \big( f^{(\ell)} + x^{\ell+1}f_{\ell+1} \big),
$$
regardless of $f_{\ell+1}$. It follows that the map $f_0\mapsto x^{-\ell} y
f^{(\ell)}$ is tangential and $ x^{-\ell} y f^{(\ell)}|_\Sigma$
is the obstruction to solving $ y f^{(\ell+1)} = O(x^{\ell+1})$.
By a simple induction this is seen to be a non-zero multiple of 
$y^{\ell+1}f_0|_\Sigma$.
\end{proof}

Thus by induction we conclude the following.
\begin{proposition}\label{gformal}
  For $h_0\notin \mathbb{Z}_{\geq 2}$ Problem~\nn{extp} can be solved
  to order $\ell$~$=$~$\infty$. For $h_0\in \mathbb{Z}_{\geq 2}$ the solution is
  obstructed by $[P_{h_0-1} f]|_\Sigma$; if, for a particular $f$,
  $[P_{h_0-1} f]|_\Sigma$ $=$~$0$ then there is a family of solutions to
  order $\ell=\infty$ parametrised by sections $f_{h_0-1}\in \Gamma
  \ce^\Phi[-d-w_0+1]|_\Sigma$.

If $(M,c,\si)$ is almost Einstein then Problem~\nn{extp} can be solved
  to order $\ell=\infty$ for all $(h_0+1)\notin 2\mathbb{Z}_{\geq 2}$.
\end{proposition}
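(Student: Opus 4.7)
The plan is to prove all three statements by iterating Lemma~\ref{gjmsstyle}. Set $h_0 := d + 2w_0$ and start from $f^{(0)} := f_0$, which trivially solves the problem to order $\ell = 0$. Each application of the lemma attempts to pass from a solution of order $\ell$ to one of order $\ell + 1$ by choosing an appropriate $f_{\ell+1}$; the key input is that this procedure fails only at the single exceptional index $\ell = h_0 - 2$, where the failure is measured by $P_{h_0-1} f_0|_\Sigma$.

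For the first statement, I observe that $h_0 \notin \mathbb{Z}_{\geq 2}$ forces $h_0 - 2 \notin \mathbb{Z}_{\geq 0}$, so the exceptional index $\ell = h_0 - 2$ never appears among $\ell = 0, 1, 2, \ldots$. Hence every inductive step yields the unique extension modulo $\sigma^{\ell+2}$, and iterating \emph{ad infinitum} produces a solution to order $\ell = \infty$.

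For the second statement, I iterate the lemma without obstruction for $\ell = 0, 1, \ldots, h_0 - 3$, uniquely determining $f_1|_\Sigma, \ldots, f_{h_0 - 2}|_\Sigma$. At the critical step $\ell = h_0 - 2$ the obstruction $[P_{h_0-1} f_0]|_\Sigma$ appears; since $P_{h_0-1}$ is tangential by Theorem~\ref{tanth}, this equals $[P_{h_0-1} f]|_\Sigma$ and so depends only on the boundary datum $f|_\Sigma$. When it vanishes, equation~\nn{key1} shows that the coefficient multiplying $f_{h_0-1}$ also vanishes, so $f_{h_0-1}|_\Sigma$ is entirely unconstrained; the weight count $w_0 - (h_0 - 1) = -d - w_0 + 1$ then identifies the free datum with an arbitrary section of $\Gamma\ce^\Phi[-d-w_0+1]|_\Sigma$, as claimed. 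For $\ell > h_0 - 2$ the hypothesis $\ell \neq h_0 - 2$ of Lemma~\ref{gjmsstyle} holds again, so the induction resumes with unique extensions and reaches $\ell = \infty$, yielding a family of solutions parametrised exactly as stated.

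For the third statement, the almost Einstein hypothesis makes $\g = \sigma^{-2}\bg$ Einstein on $M_o$, with $I^2$ a nonzero constant (rescalable to $\pm 1$). Proposition~\ref{zero} then gives $P_k = 0$ for every odd $k$, so the obstruction $P_{h_0-1}$ vanishes identically whenever $h_0 - 1$ is odd, i.e.\ whenever $h_0$ is even. Combined with the first statement, this covers every $h_0$ that is not an odd integer $\geq 3$, which is exactly the condition $(h_0 + 1) \notin 2\mathbb{Z}_{\geq 2}$; applying Part~2 then produces the desired solution to all orders. The hardest part, in my view, is not the induction itself (which Lemma~\ref{gjmsstyle} essentially performs) but the careful weight bookkeeping around the critical step, together with the observation that tangentiality of $P_{h_0-1}$ is exactly what allows the obstruction to be phrased intrinsically in terms of $f|_\Sigma$ rather than the auxiliary extension $f_0$.
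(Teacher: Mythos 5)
Your proposal is correct and follows essentially the same route as the paper, which likewise obtains the first two claims by iterating Lemma~\ref{gjmsstyle} (the obstruction occurring only at the single index $\ell=h_0-2$, where tangentiality of $P_{h_0-1}$ and the vanishing coefficient in~\nn{key1} give the free datum $f_{h_0-1}\in\Gamma\ce^\Phi[-d-w_0+1]|_\Sigma$) and the third by noting that the odd-order operators $P_{h_0-1}$ vanish in the almost Einstein case. Your weight bookkeeping and the reduction of $(h_0+1)\notin 2\mathbb{Z}_{\geq 2}$ to ``$h_0$ not an odd integer $\geq 3$'' are both accurate, so nothing further is needed.
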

Note the second part of the Proposition follows from the first using Theorem 
~\ref{tanein}.

\subsection{The formal solution operator} \label{fs}
Now to identify the asymptotics found above we re-examine the
extension problem~\ref{extp} and show first that, via the
$\frak{sl}(2)$ used above, in the generic case it is captured by a
simple ODE problem. This enables us to relate the asymptotics to 
Bessel functions.

We seek to define a formal operator $\colon K\colon$
which will map the ``initial data'' $f_0|_\Sigma$ to the solution. 
Thus as a first step we posit
\begin{equation}\label{solop}
f=(1+\alpha_1 \, xy + \alpha_2 \, x^2 y^2 +\cdots )f_0 
:=\ \colon K(z) \colon \, f_0\, ,
\end{equation}
where $z=xy$, $K(z)=1+\alpha_1 z + \alpha_2  z^2+\cdots \in \mathbb C[[z]]$ 
and  $\colon\: \ \colon$ indicates a {\em normal ordering}  defined by
$$
\colon z^k \colon = x^k y^k\, .
$$ 
Using $[y,x^k]=- x^{k-1} \, k(h+k-1)$ 
we have
$$
y\  \colon z^k \colon \, f_0 = \colon \Big[\Big(\frac{d}{dz}z\frac{d}{dz}-(h_0-1)\frac{d}{dz}+1\Big)z^k\Big]\colon
\  y f_0\, ,
$$
and so 
\begin{equation}\label{formsol}
y \, \colon K(z) \colon \, f_0 = \colon \big(z K''(z) - 
(h_0-2) K'(z) + K(z)\big) \colon \, y \, f_0  \, .
\end{equation}

Thus finding $K(z)$, so that $f:=\colon K(z) \colon
f_0$ formally solves $I\cdot D f=0$ for any $f_0$, amounts to finding
in $\mathbb C[[z]]$ 
a solution to the ordinary differential equation
\begin{equation}\label{Beq}
z K'' - (h_0-2) K' + K = 0\, .
\end{equation}
Formally solving this is equivalent to formally solving $z^2 K'' -
(h_0-2) zK' + z K = 0$ which can be re-expressed as
\begin{equation}\label{eulerf}
\big(E(E-h_0+1)+z\big) K(z) =0\, ,
\end{equation}
with 
$$
E=z\frac{d}{dz}\, ,
$$
the Euler operator.  Examining this applied to the terms $\alpha_{k-1}
z^{k-1}+\alpha_k z^k$ in the expansion of $K(z)$ we immediately recover the 
equivalent recursion relation
\begin{equation}\label{rec}
k(k-h_0+1)\alpha_k + \alpha_{k-1}=0\, ,
\end{equation}
which is solvable for $\alpha_k$, $k\in \mathbb{Z}_{\geq 1}$, so long
as $h_0\neq k+1=2,3,\ldots$.

In fact up to a simple change of variables and rescaling, the display~\nn{Beq} is Bessel's
equation. Thus, away
from the special weights $h_0=2,3,\ldots$ we have\footnote{The Bessel
  function of the first kind $J_\nu(u)$ is defined for complex values
  $u$ with $|\!\arg u|$~$<$~$\pi$. For the geometries we consider here,
  $\sqrt{z}$ --- being related to $\sigma$ and $\frac{-1}{I^2}I\cdot
  D$ --- can be purely imaginary, in which case the Bessel function
  may be rewritten in terms of a modified Bessel function.  For
  simplicity, we work with standard Bessel functions.  }
\begin{equation}\label{Kpower}
K(z) = z^{\frac{h_0-1}{2}} \Gamma(2-h_0)\, J_{1-h_0}\big(2\sqrt{z}\, \big)\, .
\end{equation}
The Gamma function is included to achieve the correct normalisation,
and the Bessel function of the first kind was chosen as its Taylor series 
expansion\footnote{The Pochhammer symbol
  $\big(k\big)_l=k(k-1)\cdots(k-l+1)$ and $(k)_0=1$.}
$$
z^{\frac{h_0-1}{2}}\Gamma(2-h_0) J_{1-h_0}\big(2\sqrt{z}\big)=
\sum_{m=0}^\infty \frac{(-z)^m}{m!\big(m+1-h_0\big)_m}$$ $$\hspace{5cm}=1+\frac{z}{h_0-2}+\frac{z^2}{2(h_0-2)(h_0-3)}+\cdots \, ,
$$
matches the series solutions found above.
 
Orchestrating the above results we have the following answer to
Problem~\ref{extp}, away from  the exceptional weights.
\begin{proposition}\label{fsprop} On $\ce^\Phi[w_0]$, $w_0\notin \{ \frac{j-d}{2}\mid j\in \mathbb{Z}_{\geq 2}\}$, the formal solution operator $\colon K\colon$ is given by 
 $$
f_0\stackrel{{\small \colon} K{\small \colon} \ }{\longmapsto} \sum_{m=0}^{\infty}\sigma^m  \, 
\frac{1}{m!\big(m+1-h_0\big)_m\big (I^2\big)^m} \, \big(I\cdot D\big)^m \, 
f_0\, .
$$
\end{proposition}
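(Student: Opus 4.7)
The plan is to synthesise the stated closed form directly from the analysis preceding the proposition; essentially all of the substantive work is already in place, and what remains is bookkeeping. First I would invoke~\nn{formsol}, which shows that $\colon K(z) \colon f_0$ formally annihilates $I\cdot D$ exactly when $K(z) \in \mathbb{C}[[z]]$ solves the ODE~\nn{Beq}, namely $zK'' - (h_0-2)K' + K = 0$, normalised by $K(0) = 1$ (so that $m=0$ returns $f_0$ and the operator agrees with the identity to leading order).

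Next, writing $K(z) = \sum_{m \geq 0} \alpha_m z^m$, substitution into~\nn{Beq} produces the recursion~\nn{rec}: $k(k - h_0 + 1)\alpha_k + \alpha_{k-1} = 0$ for $k \geq 1$. With $\alpha_0 = 1$ a direct induction gives
$$
\alpha_m \;=\; \frac{(-1)^m}{m!\,(m+1-h_0)_m}\, ,
$$
and these coefficients are uniquely determined at every $m \geq 1$ precisely when the factor $k(k-h_0+1)$ never vanishes for $k \in \mathbb{Z}_{\geq 1}$, i.e.\ when $h_0 \notin \mathbb{Z}_{\geq 2}$. Under the dictionary $h_0 = d + 2w_0$ this is exactly the excluded weight set $\{(j-d)/2 : j \in \mathbb{Z}_{\geq 2}\}$ of the hypothesis.

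Finally I unfold the normal-ordering convention $\colon z^m \colon = x^m y^m$: with $x = \sigma$ and $y = -\tfrac{1}{I^2}\, I\cdot D$,
$$
\colon K(z) \colon f_0 \;=\; \sum_{m \geq 0} \alpha_m\,\sigma^m \,\Big({-}\tfrac{1}{I^2}\, I\cdot D\Big)^m f_0,
$$
and combining the two independent $(-1)^m$ factors — one from $\alpha_m$, one from the $m$-fold product — yields the stated series. The main obstacle is really just the careful sign bookkeeping and the translation $h_0 = d+2w_0$ of the algebraic solvability obstruction into the geometric weight condition appearing in the proposition; no new analytic input is needed beyond the $\mathfrak{sl}(2)$-machinery set up in Section~\ref{fs}.
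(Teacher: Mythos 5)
Your proposal is correct and follows essentially the same route as the paper: reduce to the ODE~\nn{Beq} via~\nn{formsol}, solve the recursion~\nn{rec} for the coefficients $\alpha_m=\frac{(-1)^m}{m!(m+1-h_0)_m}$ under the stated weight restriction, and unfold the normal ordering $\colon z^m\colon=\sigma^m\big({-}\tfrac{1}{I^2}I\cdot D\big)^m$ so the two signs cancel. The only cosmetic difference is that the paper packages the coefficients via the Bessel-function expression~\nn{Kpower} before reading off the Taylor series, whereas you obtain them directly by induction on the recursion; the content is identical.
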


\vspace{.1cm}
\begin{remark}
  Readers familiar with~\cite{GrZ} and related treatments of
  Poincar\'e--Einstein asymptotics may recall an evenness property of
  the series expansions there.  This feature appears as a consequence
  of the form of our solutions (see, for example,
  Proposition~\ref{fsprop}), where every appearance of the
  scale~$\sigma$ is accompanied by that of the operator $I\cdot D$,
  both of which, upon making choices corresponding to the setting
  of~\cite{GrZ}, yields one power of the coordinate around which their
  expansion is based.
\end{remark}
\begin{remark}
  Curiously enough, in a different realisation of the ${\frak sl}(2)$
  (as well as for higher rank ${\frak g}$ and super Lie algebras,
  where $xy$ is replaced by products of generators $x_\alpha y_\alpha$
  summed over positive roots $\alpha$), the same normal ordered
  expression as $\colon K \colon$ (albeit at certain fixed values of
  $h_0$) appeared in the completely different contexts of higher spin
  systems and minisuperspace quantizations of supersymmetric black
  holes~\cite{Cherney:2010xh,Cherney:2009md,Cherney:2009mf,Cherney:2009vg,Sagnotti}.
  There the underlying representation of ${\frak g}$ is in terms of
  algebraic operations on tensors. The normal ordered Bessel function
  $\colon K\colon$ appeared, not as a solution generating operator,
  but instead was used to construct the differential in a complex
  whose cohomology described either higher spin excitations or states
  in a quantized black hole.
\end{remark}
\vspace{.1cm}

At this point it is not yet evident how the formal solution $\colon K(z)
\colon f_0$ depends on the choice of $f_0\in \ce^\Phi[w_0]$ extending
$f_0|_\Sigma$. This is settled as follows.

\begin{proposition}\label{Kx}
  The solution $\colon K(z) \colon f_0$ is independent of how $f_0$   
smoothly extends  $f|_\Sigma$ off $\Sigma$, as a section of $\ce^{\Phi}[w_0]$.
\end{proposition}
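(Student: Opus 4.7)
The plan is a direct computation using only the $\mathfrak{sl}(2)$ commutation relations and the recursion \nn{rec} that defines the coefficients $\alpha_k$ of $K(z)$. Let $\tilde f_0$ be a second smooth extension of $f|_\Sigma$ to a section of $\ce^\Phi[w_0]$. Their difference $\tilde f_0 - f_0$ is a smooth section of $\ce^\Phi[w_0]$ that vanishes along $\Sigma=\mathcal{Z}(\sigma)$, so (since $\sigma$ is a defining density) there exists a smooth $g\in\Gamma\ce^\Phi[w_0-1]$ with $\tilde f_0 - f_0 = \sigma g$. By linearity of $\colon K\colon$, it therefore suffices to prove
\[
\colon K(z)\colon\!(\sigma g) \;=\; 0
\]
as a formal power series in $\sigma$, for every such $g$.

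Expanding, with $x=\sigma$ and $y=-\frac{1}{I^2}I\cdot D$,
\[
\colon K(z)\colon\!(xg)
=\sum_{m\ge0}\alpha_m\, x^m y^m (xg).
\]
I would move the rightmost $x$ past $y^m$ using Corollary~\ref{corpid}, which gives $[x,y^m]=y^{m-1}m(h-m+1)$, i.e.\ $y^m x = xy^m - y^{m-1}m(h-m+1)$. Acting on $g$, which has weight $w_0-1$ and so satisfies $hg=(h_0-2)g$, one obtains
\[
y^m(xg) \;=\; x\, y^m g \;-\; m(h_0-m-1)\, y^{m-1} g.
\]
Substituting and re-indexing ($k=m+1$ in the first sum, $k=m$ in the second), the whole expression collapses to
\[
\colon K\colon\!(xg)
=\sum_{k\ge1}\bigl[\alpha_{k-1}-\alpha_k\,k(h_0-k-1)\bigr]\, x^k y^{k-1}g.
\]

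The final step is to invoke the recursion \nn{rec}, which reads $k(k-h_0+1)\alpha_k+\alpha_{k-1}=0$, equivalently $\alpha_{k-1}=k(h_0-k-1)\alpha_k$. This is precisely what is needed to annihilate every bracket in the display above, so $\colon K\colon\!(\sigma g)=0$ and hence $\colon K\colon f_0=\colon K\colon\tilde f_0$. The hypothesis $w_0\notin\{(j-d)/2\mid j\in\mathbb Z_{\ge2}\}$ (equivalently $h_0\notin\mathbb Z_{\ge 2}$) is exactly what ensures the denominators in the recursion never vanish, so the $\alpha_k$ are well defined and the cancellation is clean. There is no serious obstacle: the only non-routine point is spotting that the commutator identity of Corollary~\ref{corpid}, when paired with the defining recursion \nn{rec}, forces the cancellation --- a conceptually pleasing manifestation of the fact that $K$ satisfies the Bessel-type ODE \nn{Beq}. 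As a sanity check one can also reach the same conclusion structurally: the formal series $H:=\colon K\colon\tilde f_0-\colon K\colon f_0$ is by construction a formal solution of $I\cdot D\,H=0$ with $H|_\Sigma=0$, and iterating the uniqueness clause of Lemma~\ref{gjmsstyle} starting from the zero extension forces every boundary coefficient of $H$ to vanish, giving $H=0$ as a formal power series at $\Sigma$.
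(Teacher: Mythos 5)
Your proposal is correct and follows essentially the same route as the paper: the paper likewise reduces to showing $\colon K(z)\colon x f_1=0$ for $f_1\in\Gamma\ce^\Phi[w_0-1]$, commutes $x$ past the normal-ordered powers of $y$ via Corollary~\ref{corpid}, and observes that the resulting series vanishes because $K$ satisfies the ODE~\nn{Beq} (your recursion~\nn{rec} is the coefficient-level form of the same fact). The only cosmetic difference is that the paper packages the cancellation as $x\,\colon\bigl(zK''-(h_0-2)K'+K\bigr)\colon f_1=0$ rather than checking it term by term.
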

\begin{proof}
  Let $f_1\in
  \Gamma\ce^{\Phi}[w_0-1]$. Then $xf_1\in \Gamma\ce^{\Phi}[w_0]$ and
  we consider
$$
\colon z^k \colon \, x f_1 = x^k y^k x f_1.
$$
Using $[y^k,x]=-y^{k-1}
k(h-k+1) $ from~\nn{pid} we see that 
$$
x^k y^k x f_1 = -x \, \big(  k(h_0-k-1)x^{k-1}y^{k-1} - x^ky^k \big)f_1
= x\colon  \big( k(k+1-h_0)z^{k-1}+z^k \big) \colon \, f_1. 
$$
Thus 
$$
\colon K(z) \colon \, x \, f_1 =  x \, \colon \big(z K'' - 
(h_0-2) K' + K\big) \colon \, f_1 =  x \, \colon \Big[\, \frac{1}{z} \big(E(E-h_0+1)+z\big) K(z)\Big]  \colon \, f_1\, ,
$$
and we note the equation~\nn{Beq} has again arisen. Since 
$K(z)$ is a solution of this, the right-hand-side of the display vanishes.
\end{proof}

In the following it will be convenient to annotate $K(z)$ with $h_0$
so that $K^{h_0}(z)$ indicates the series in $\mathbb{C}[[z]]$ giving
the solution operator on $\ce^{\Phi}[w_0]$ (where $h_0=d+2w_0$ as
usual).

\subsection{Solutions of the second kind} \label{sec}
The equation~\nn{Beq}, being second order, admits a second independent
homogeneous solution. This corresponds to another solution of
the~$I\cdot D$ equation above, but to see this rigorously we first 
  generalise ({\it cf.} Problem~\ref{extp}) our
notion of solution as follows.
\begin{problem}\label{extp2}
  Given $\f_0|_{\Sigma}\in \Gamma \ce^\Phi[w_0-\alpha]|_\Sigma$ and an
  arbitrary extension $\f_0$ of this to $\Gamma\ce^\Phi[w_0-\alpha]$ over
  $M$, find $\f_i\in \ce^{\Phi}[w_0-\alpha -i]$ (over $M$),
  $i=1,2,\cdots$, so that
\begin{equation}\label{Jf}
\f: = \si^\alpha \big( \f_0 + \si\,  \f_1  + \si^2\, \f_2 +\cdots + O(\si^{\ell+1}) \big)
\end{equation}
solves $I\cdot D \f  = O(\si^{\ell+\alpha})$, off $\Sigma$, 
for $\ell\in \mathbb{N}\cup \infty$
as high as possible.
\end{problem}
\noindent As usual we shall write $I\cdot D \f =0 $ as a shorthand for the
statement that $\f$ is an $\ell=\infty$ solution.

The exponent $\alpha$, if not integral, takes Problem~\ref{extp2}
outside the realm of the universal enveloping algebra $\cU(\frak{g})$ and its
modules. However $\si=x\in \frak{g}$ is a smooth section of an oriented real
line bundle and as such $x^\alpha $ is well defined for any 
complex $\alpha$. What is more, as  follows  from  Remark~\ref{remarb},
within the operator algebra Corollary~\ref{corpid} allows the extension
\begin{equation}\label{pidext}
[x^\alpha,y]=x^{\alpha-1} \alpha (h +\alpha-1).
\end{equation}

Since $\f_0$ may be non-vanishing along $\Sigma$ it follows immediately
from~\nn{pidext} that $I\cdot D \f =0 $ has no solution unless 
$$
\alpha=0 \quad \mbox{or} \quad \alpha= h_0-1,
$$
where $h \f=h_0 \f$. Since $\alpha =0$ is the case of the
extension problem treated above, we set here $\alpha:= h_0-1$.
\newcommand{\oh}{\overline{h}}
Let us write $\oh_0$ for the $h$-weight of $\f_0$.
Observe that since $h (x^\alpha \f_0)=h_0 (x^\alpha \f_0)$, 
it follows that 
$$
1-\oh_0=h_0-1.
$$

Now write
$$
G(z)= 1+ \beta_1 z + \beta_2 z^2+\cdots  
$$
for the element of $\mathbb{C}[[z]]$ so that 
$$
 \colon G(z) \colon  = 1+\beta_1 \, xy + \beta_2 \, x^2 y^2 +\cdots  
,
$$
and $(x^{\alpha} \colon G(z) \colon ): \ce^\Phi[w_0-\alpha]\to
\ce^\Phi[w_0] $ is the putative solution operator (with $\colon\ \:
\colon$ the normal ordering as before).  Following the ideas  above
we now seek an ODE that determines $G$. But note that since $\alpha=h_0-1$
then, on $\ce^\Phi[w_0-\alpha]$, we have  
$y \, x^\alpha \colon G(z) \colon \, = x^\alpha \, y \, \colon G(z) \colon \ $.
Thus, upon the replacement of $h_0$ with $\oh_0$, we are in fact
reduced to {\em the same} ODE
\begin{equation}\label{same}
(z G')' - (\oh_0-1) G' + G =0 \quad \Leftrightarrow \quad \big(
E(E-\oh_0+1) +z \big)G(z) =0,
\end{equation}
as in~\nn{Beq}. It follows that $G(z)$ is given by~\nn{Kpower} with this
replacement, that is 
\begin{equation}\label{Koh}
G(z)= K^{\oh_0}(z)= z^{\frac{\oh_0-1}{2}} \Gamma(2-\oh_0)\, J_{1-\oh_0}\big(2\sqrt{z} \big)\, =
z^{\frac{1-h_0}{2}} \Gamma(h_0)\, J_{h_0-1}\big(2\sqrt{z} \big)\, .
\end{equation}
\begin{remark}
  Using $Ez^\alpha=z^{\alpha}(E+\alpha)$, It is easily verified that $z^\alpha G(z)$ is a (``second'')
  solution to~\nn{Beq}.
\end{remark}

For the record we note that, in terms of the weight $h_0$, the ODE~\nn{same} is 
\begin{equation}\label{ODE2}
(z G')' + 
(h_0-1) G' + G =0.
\end{equation}
Expressed alternatively, we wish to solve
$$
\big( E(E+h_0-1) +z \big)G(z) =0,
$$
which leads to the equivalent recursion relation 
$$
k(k+h_0-1)\beta_k + \beta_{k-1}=0\, .
$$
This is solvable for $\beta_k$, $k\in \mathbb{Z}_{\geq 1}$, now with
the restriction $h_0\neq 1-k= 0,-1, \ldots$.

Putting together the results from Section~\ref{fs} and Section
~\ref{sec} we summarise as follows.
\begin{proposition}\label{twosol}
Problem~\ref{extp2} has a solution only if $\alpha =0$ or $\alpha =h_0-1$.
If $\alpha =0$ and $h_0\notin \mathbb{Z}_{\geq 2}$
then an infinite order solution is given by 
\begin{equation}\label{one}
f_0\mapsto \, \colon K^{h_0}\colon \, f_0\, , 
\end{equation}
where $f_0$ is any section of $\ce^\Phi[w_0]$ smoothly extending $f_0|_\Sigma \in
\ce^\Phi[w_0]|_\Sigma$.

If $\alpha = h_0-1 $ and $h_0\notin \mathbb{Z}_{\leq 0}$ then 
an infinite order solution is given by
\begin{equation}\label{two}
\f_0 \mapsto x^{h_0-1} \colon K^{\overline{h}_0}\colon\,  \f_0 \, ,
\end{equation}
where $\f_0$ is any smooth extension of  $\f_0|_\Sigma$ to a section of
$\ce^\Phi[-d-w_0+1]$.

In each case the solution is independent of how $f_0$ (respectively
$\f_0$) extends $f|_\Sigma$ (respectively $\f|_\Sigma$) off~$\, \Sigma$. Thus~\nn{one} and~\nn{two} determine well-defined solution
operators
$$
(\colon K^{h_0}\colon) : \Gamma \ce^\Phi[w_0]|_\Sigma \to \Gamma \ce^\Phi[w_0]\, ,
$$
and 
$$
(x^{h_0-1}\, \colon K^{\overline{h}_0}\colon):\Gamma \ce^\Phi[w_0-h_0+1]|_\Sigma\to
\Gamma \ce^\Phi[w_0]\,Ê.
$$

For $h_0\notin \mathbb{Z}$ the solutions~\nn{one} and
\nn{two} are distinct if $f_0|_\Sigma$ and $\f_0|_\Sigma$ are not zero.
Further any formal solution $f\in
\Gamma\ce^\Phi[w_0]$ of the equation $I\cdot D f=0$ (off $\Sigma$)
is a linear combination of solutions of the form~\nn{one} and
\nn{two}.
\end{proposition}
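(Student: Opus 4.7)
The plan is to reduce everything to the $\frak{sl}(2)$ algebra and the Bessel ODE (\ref{Beq}) already analysed in Section~\ref{fs}. First I would establish the dichotomy on $\alpha$. For the ansatz $\f = \sigma^\alpha(\f_0 + \sigma \f_1 + \cdots)$, with $\f_0 \in \Gamma\ce^\Phi[w_0-\alpha]$ carrying $h$-weight $\overline{h}_0 = h_0 - 2\alpha$, the extended commutator (\ref{pidext}) gives
\begin{equation*}
y\, x^\alpha \f_0 \;=\; x^\alpha \, y \, \f_0 \;-\; x^{\alpha-1} \alpha (\overline{h}_0 + \alpha - 1)\f_0 .
\end{equation*}
Since $y$ preserves smoothness, the leading term of $y\f$ has order $x^{\alpha - 1}$ with boundary value $-\alpha(\overline{h}_0+\alpha-1)\f_0|_\Sigma$. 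For $y\f = O(x^{\alpha+\ell})$ with $\ell \geq 0$ and generic boundary data, this coefficient must vanish, forcing $\alpha = 0$ or $\overline{h}_0 + \alpha - 1 = 0$; the latter simplifies to $\alpha = h_0 - 1$.

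Next I would reduce each admissible case to the Bessel-type ODE. The $\alpha = 0$ case is Proposition~\ref{fsprop} verbatim, requiring $h_0 \notin \mathbb{Z}_{\geq 2}$. For $\alpha = h_0 - 1$, the vanishing just used becomes the exact commutation $y\, x^\alpha = x^\alpha y$ on $\ce^\Phi[w_0-\alpha]$, so the ansatz $\f = x^\alpha \colon G(z) \colon \f_0$ leads, by the same computation as in (\ref{formsol})--(\ref{eulerf}) but with $h_0$ replaced by $\overline{h}_0 = 2 - h_0$, to the ODE $\big( E(E-\overline{h}_0+1) + z \big) G(z) = 0$. Its normalised formal power series solution is $G = K^{\overline{h}_0}$ as displayed in (\ref{Koh}); the associated recursion is solvable precisely when $\overline{h}_0 \notin \mathbb{Z}_{\geq 2}$, equivalently $h_0 \notin \mathbb{Z}_{\leq 0}$.

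Independence of the formal solution from the choice of extension in the first case is Proposition~\ref{Kx}. In the second case, replacing $\f_0$ by $x \f_1$ and commuting $x^\alpha$ to the left reduces the verification to the same calculation carried out in the proof of Proposition~\ref{Kx}: one obtains
\begin{equation*}
x^{\alpha+1}\, \colon \tfrac{1}{z}\big(E(E-\overline{h}_0+1)+z\big) K^{\overline{h}_0}(z) \colon \f_1 ,
\end{equation*}
which vanishes because $K^{\overline{h}_0}$ solves the ODE above.

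Finally, for the distinctness and completeness when $h_0 \notin \mathbb{Z}$: the solution (\ref{one}) expands in integer powers of $\sigma$, while (\ref{two}) has all its powers shifted by the non-integer $h_0 - 1$, so the two sets of powers are disjoint and the solutions are linearly independent whenever the respective boundary data are nonzero. For completeness, the first paragraph identifies $0$ and $h_0 - 1$ as the only leading exponents compatible with $I\cdot D \f = 0$---precisely the indicial roots of (\ref{Beq})---so any formal solution decomposes as a sum of two Frobenius-type series with these exponents, which by uniqueness of the solution operators are scalar multiples of (\ref{one}) and (\ref{two}). The main subtlety is exactly this last step: without the non-integer hypothesis the two Frobenius series can overlap or couple through log terms, which is why that regime is deferred to Section~\ref{logs}.
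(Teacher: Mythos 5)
Your proposal is correct and follows essentially the same route as the paper: the dichotomy on $\alpha$ via the extended commutator \nn{pidext} applied to the leading term, the reduction of the $\alpha=h_0-1$ case to the same Euler/Bessel ODE with $h_0$ replaced by $\oh_0=2-h_0$ (whence the condition $h_0\notin\mathbb{Z}_{\leq 0}$), and well-definedness by the argument of Proposition~\ref{Kx}; the paper's Proposition~\ref{twosol} is itself just a summary of Sections~\ref{fs} and~\ref{sec}, and your indicial-root argument for distinctness and completeness fills in exactly the step the paper leaves implicit.
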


\begin{remark}
When $h_0$ is a positive integer the second solution~\nn{two}
still provides an infinite order solution to Problem~\ref{extp2}.
Moreover, this is an instance of the situation where one can
compose tangential operators:
for positive integer~$h_0$, the second solution also furnishes a solution to a modified
version of Problem~\ref{extp2} 
\begin{equation*}
f_0 \mapsto x^{h_0-1} \colon K^{\overline{h}_0}\colon\, y^{h_0-1}  f_0 \, ,
\end{equation*}
where instead of $\f_0|_\Sigma$, one is given an $f_0|_{\Sigma}\in \Gamma \ce^\Phi[w_0]|_\Sigma$
and an
  arbitrary extension $f_0$ of this to $\Gamma\ce^\Phi[w_0]$ over
  $M$. Again, this solution is independent of how $f_0$ extends $f_0|_\Sigma$  off~$\, \Sigma$.
\end{remark}

Finally, note that $h_0=1$ ({\it i.e.} $w_0=-\frac{d-1}2$) is allowed
for both~\nn{one} and~\nn{two}, but in this case they coincide. This
amounts to the situation of a repeated root of the indicial equation
for the Frobenius method for second order ODEs. This suggests a new
second solution, perhaps of the form $ \F (\si) \log \si $, where $\F (\si)$ is
again a power series type solution. The weights $h_0\in
\mathbb{Z}\setminus \{1\}$ should also be dealt with by log
terms. With $\si$ a section of~$\ce_+[1]$ we can make sense of $\log
\si$ using the notion of log-densities developed in Section
~\ref{dlogd}.

\subsection{Solutions with log terms}\label{logs}
We now consider the case of weights $h_0$~$\in$~$\mathbb{Z}$. From above we
see that for each such weight there is just one smooth solution,
{\it i.e.} solution to Problem~\ref{extp2}. In view of the symmetry between
the cases of weights $h_0$ and $\oh_0$ it suffices to treat the
case of $h_0\in \mathbb{Z}_{\geq 1}$.

We shall assume that $(M,c,\si) $ is conformally compact with $\si
\geq 0$.  Recalling the discussion of log density bundles from Section
~\ref{dlogd} we note that $\log \si$ is then well defined on the
interior of $M$, where $\si>0$, and we include $\log \si =\log x$ into
our formal computations. Since we understand pointwise the meaning of
complex linear operations on log densities we can take the tensor product of
$\cf[w]$ with vector bundles, and in particular weighted tractor bundles.

A slight subtlety arises at this point, because $\log \si$ is not an
eigen-section for the weight operator, {\it cf.}~\nn{law of the logs}.
Thus we introduce a second scale: let~$\tau$ denote some true scale on
$M$, {\it i.e.}\ $\tau\in \Gamma\ce[1]$ is nowhere zero on $M$; in
particular $\tau|_\Sigma$ gives a scale on $\Sigma$, and we may write
$g^\tau$ for the metric $\tau^{-2}\bg$ on $M$ determined by $\tau$.

\begin{problem}\label{extp3} 
  Let $h_0=d+2 w_0$ be in $\mathbb{Z}_{\geq 2}$.
  Given non-zero $f_0|_{\Sigma}\in
  \Gamma \ce^\Phi[w_0]|_\Sigma$ and an arbitrary extension~$f_0$ of this to
  $\Gamma\ce^\Phi[w_0]$ over $M$, find $f_i\in \ce^{\Phi}[w_0 -i]$,
  $i=1,2,\cdots$, and $\f_j\in \ce^{\Phi}[-d-w_0+1 -j]$ (over $M$),
  $j=0,1,\cdots$, so that
\begin{equation}\label{logf}
  f: = \big(f_0 + \si f_1   +\cdots \big) + \si^{h_0-1} (\log \si-\log \tau) \big(\f_0+\si \f_1 + \cdots  \big)+ O(\si^{\ell+1}) 
\end{equation}
solves $I\cdot D f = O(\si^\ell)$, off $\Sigma$, for $\ell\in
\mathbb{N}\cup \infty$ as high as possible. More precisely, if
$\ell\geq h_0-1$ then the display should also include $+\, O(\si^{\ell+1}
\log \si)$, meaning up to the addition of a formally smooth term
tensor product with $ \si^{\ell+1} \log \si $, and then we seek to
solve $I\cdot D f = O(\si^\ell)+ O(\si^\ell \log \si)$, for $\ell$ as
high as possible.

If $h_0=1$ we seek instead a solution with $\f_0|_\Sigma$ not zero and
taken as the initial data, and $f_0=0$.
\end{problem}

\begin{remark}
  Three comments on the statement of Problem~\ref{extp3} are important:\\[2mm]
  First note that we have explicitly entered a second scale, {\it
    viz.}\ the scale $\tau$ that extends to the boundary. Although we
  may treat the $I\cdot D$ equation without doing this, the man\oe uvre
  is necessary if we require that $f$ is an eigenfunction of the weight operator
  (enabling the interior relation~\nn{IdotDsc}). Note that $ \log \si
  -\log \tau =\log r$ is a function.\\[2mm]
  Second we have decreed where the log terms enter by adding a term of
  the form $ \si^{h_0-1}$ $\log \si$  $\overline{\mathsf
    f}$. Alternatively we could have added $ \si^{\beta} \log \si \
  \overline{\mathsf f}$ for some $\beta$. It is straightforward to
  then show that $\beta=h_0-1$ is forced.
  \\[2mm]
  Third, in the statement of the problem we have stipulated
  that~$\log\tau$ appear in the combination $
  \log\sigma-\log\tau$. Although we cannot relax this requirement, we
  shall see that the simplest presentation of the solution is 
  not {\em explicitly} in this form, but differs by terms involving  various
  reorderings of the operators $y$ and $\log\tau$.
\end{remark}  
  
\begin{remark}\label{othersol}
  Allowing $f_0=0$, Problem~\ref{extp3} always has a solution of the
  second kind (discussed in Section~\ref{sec}) with the leading power
  of $\si$ equaling the integer $h_0-1$. It is given by the formula
  $x^{h_0-1}\colon K^{2-h_0}(z)\colon f_{h_0-1}$ where
  $f_{h_0-1}|_{\Sigma}\in \Gamma \ce^\Phi[-d-w_0+1]|_\Sigma$ and~$f_{h_0-1}$ 
  is an arbitrary extension of this to
  $\Gamma\ce^\Phi[-d-w_0+1]$ over $M$. In the following discussion, we
  focus on the case of vanishing $f_{h_0-1}$; this term will be
  supplanted by the leading coefficient of the logarithm $\f_0$ (which is of
  the same weight as $f_{h_0-1}$).
\end{remark}

\newcommand{\sff}{\mathsf f}
\newcommand{\ssf}{\hspace{.2mm}\overline{\mathsf f}}
\newcommand{\ssg}{\mathsf g}

Let us write 
$$
\, {\mathsf f}:= f_0\, + x f_1 \,  + x^2 f_2 \, +\cdots 
$$
and
$$
\ \  \overline{\mathsf f}:= \f_0+ x\,  \f_1 +x^2\,  \f_2+\cdots\, .
$$
Thus we seek a solution  of the form 
\begin{equation}\label{fs3}
\sff + x^{h_0-1} \, (\log x -\log \tau)\ \ssf ~.
\end{equation}
To determine the equations on $\sff$ and $\ssf$ we need to draw $\log
x$ into our operator algebra.  Formally, the identity
$$
\lim_{k\rightarrow 0}\frac{x^k-1}{k}=\log x
$$
coupled with the relations $[h,x]=2x$ and $[y,x^k]=-x^{k-1} k(h+k-1)$ from Proposition~\ref{slprop}
and Corollary~\ref{corpid} suggest that
\begin{equation}\label{ylogx}
[h,\log x]=2\, ,\qquad [y,\log x]=-x^{-1}(h-1)\, .
\end{equation}
The first of these relations does  hold rather generally; if $\mu\in \Gamma \ce_+[w_0]$ is any positive weighted conformal density,
then $\log\mu$ is a weighted log density, {\it viz.} a section of ${\mathcal F}[w_0]$ and, by virtue of~\nn{law of the logs}
obeys
$$
[h,\log\mu]=2w_0\, 
$$
read as an operator relation acting on any section of a weighted tractor
bundle.  The second relation in~\nn{ylogx} can be easily proven using ambient
tractor machinery, as provided by Proposition~\ref{toadd2}. It also
holds on arbitrary sections of weighted tractor bundles.  In fact both
relations in~\nn{ylogx} can be shown (again by ambient techniques) to
hold also on log densities or tensor products of these with conformal
densities, or more generally on a log density tensor product with any
weighted tractor bundle. In the remainder of this Section we will
develop various consequences of the above identities, with the same
range of validity, and state them without explicit reference to the
class of sections on which they act.

Now we apply $y$ to~\nn{fs3}.  Since the weight of $\ssf $ is such
that $[y,x^{h_0-1}]\ssf=0$, and similarly $[y,x^{h_0-1}] (\log x-\log
\tau) \ssf=0$ new terms arise only from the commutator $[y,\log
x-\log \tau ]$. Thus the equation from $y$ applied to~\nn{fs3} is
\begin{equation}\label{meq}
0=y \, \sff  
-  \big[1-h_0 \big] x^{h_0-2}\, \ssf  
+ x^{h_0-1} (\log \!\,x\ y-y \log \tau) \, \ssf ~.
\end{equation}
Some observations greatly simplify the analysis of this. First it is
not difficult to see that the $\log x$ terms must vanish separately and so 
$$
y \, \ssf=0.
$$
Given the weight of $\ssf$ we know that there is a formal power
series type solution of the form $\ssf = \colon \F(z)\colon \, \f_0$, so we
obtain the equation on $\F(z)$ from~\nn{ODE2}:
\begin{equation}\label{Besse}
\big[E(E+h_0-1)+z\big]\,  \F(z) =0\, .
\end{equation}
From Section~\ref{sec} this  is solvable for $h_0\in
\mathbb{Z}_{\geq 1}$, yielding 
\begin{equation}\label{FF}
\F(z)= K^{\oh_0}(z).
\end{equation}
At this point our proposed solution~\nn{fs3} reads $$\sff + x^{h_0-1} \, (\log x -\log \tau)\ \colon K^{\oh_0}(z) \colon \, \f_0$$
and we have as yet not determined $\sff$ or $\f_0$. Our second observation is that we may modify the expression~\nn{fs3}
and seek a formal solution of the form
\begin{equation}\label{fs4}
\sff + x^{h_0-1} \, \big(\log x \ \colon K^{\oh_0}(z)\colon\   -\ \colon K^{\oh_0}(z)\colon\, \log \tau\big)\ \f_0 ~,
\end{equation}
because moving $\log\tau$ past the operator $\colon
K^{\oh_0}(z)\colon$ produces terms, each of a well-defined weight,
which can be absorbed into the as yet undetermined~$\sff$ at orders
higher than~$x^{h_0-1}$.   The utility of this man\oe uvre will soon be
clear. 

Next we upgrade the identity~\nn{formsol} to an
operator relation for formal solution operators $\colon K(z)\colon\in
{\mathbb C}[[z]]$,
\begin{equation}\label{pretty snazzy identity}
y \, \colon K(z)\colon \ = \ \colon \big(z K''(z) + 
2 K'(z) + K(z)\big) \colon \, y - \colon K'(z)\colon \, y h\, .
\end{equation}
Using this, as well as~\nn{law of the logs} and the fact that~\nn{FF} solves~\nn{Besse}, we find that the equation 
arising from the 
modified expression for our proposed solution~\nn{fs4} is
\begin{equation}\label{inho}
y \, \sff  
=  \Big[1-h_0 \Big] x^{h_0-2}\, \colon K^{\oh_0}(z)\colon \, \f_0  - 2\, x^{h_0-1} \,  \colon \frac{dK^{\oh_0}(z)}{dz}\colon\, y \f_0~.
\end{equation}
Notice that for  any $\f_0$ all log terms have dropped out leaving an
inhomogeneous equation for $\sff$.
Settling this up to terms of order $x^{h_0-2}$ is the problem
\begin{equation}\label{key}
y \, \sff  
=  \Big[1-h_0 \Big] x^{h_0-2}\, \colon K^{\oh_0}(z)\colon \, \f_0 + O(x^{h_0-1}).
\end{equation}
Here we see that there are two cases $h_0=1$, and $h_0\in
\mathbb{Z}_{\geq 2}$. We treat the latter first.

For $h_0\in \mathbb{Z}_{\geq 2}$, and $f_0|_\Sigma$ non-zero, the
equation~\nn{key} can be treated in two stages.  The first of these
consists of solving $y \, \sff = O(x^{h_0-2})$. This is simply an
initial finite part of the Problem~\ref{extp} treated in Section
~\ref{first}. Thus we obtain a solution (the unique solution to the
given order, if $f_0|_\Sigma$ is assumed not zero) $\sff = \colon
F_{h_0-2}(z) \colon\, f_0 + O(x^{h_0-2})$ where the {\em polynomial}
$F_{h_0-2}(z)$, arising as the operator expressing the partial sum up of terms up to order
$x^{h_0-2}$ in $\sff$, obeys
$$
\big[E(E-h_0+1)+z\big] F_{h_0-2}(z)= O(z^{h_0-2})\, .
$$
Explicitly 
\begin{equation}\label{polynomial}
F_{h_0-2}(z)=1+\frac{z}{h_0-2}  + \frac{z^2}{2!(h_0-2)(h_0-3)}  +\ \cdots\ +\frac{z^{h_0-2}}{\big((h_0-2)!\big)^2} \ ,
\end{equation}
which amounts to solving~\nn{rec} when $k = 1,2,\ldots , h_0-2$.  So
\begin{equation}\label{spitsout}
y\,\colon F_{h_0-2}(z)\, \colon  f_0 
= \frac{1}{\big((h_0-2)!\big)^2}\   x^{h_0-2}y^{h_0-1} f_0 . 
\end{equation}
Now inspecting~\nn{rec} we see the following: the equation $\colon \,
\big[E(E-h_0+1)+z\big] F(z)\, \colon f_0 = 0 $ cannot be solved to
higher order in general, with the tangential operator $y^{h_0-1} f_0 =
P_{h_0-1}f_0$ (of Theorem~\ref{tanth}) the obstruction (if this
vanishes we may continue, {\it cf.}\ Lemma ~\ref{gjmsstyle} and Proposition
~\ref{gformal}).   At the next order $\f_0$ provides the rescue
in~\nn{key}, in that~\nn{key} can be solved by setting
\begin{equation}\label{fbarf}
\f_0=- \frac{1}{(h_0-1)!(h_0-2)!} \ y^{h_0-1} f_0 .
\end{equation}

Equation~\nn{inho} is now solved canonically to all higher orders as follows. We extend 
$\sff = \colon F_{h_0-2}(z)\colon \, f_0 + O(x^{h_0-1})$ by first setting the undetermined
coefficient of $x^{h_0-1}$ to zero. (Note that any other choice is tantamount to
adding to our solution a non-trivial solution of the second kind, as
discussed in Remark~\ref{othersol}.)  By~\nn{fbarf} $\f_0$ is
determined by $f_0$, whence so the right-hand-side of~\nn{inho} as a power series
type expansion in~$x$. 
Now in equation~\nn{inho} we have dealt with
all terms below order $x^{h_0-1}$. But after this the 
equation  is
solvable recursively, since it is simply an inhomogeneous version of the recursion relation~\nn{rec} of the form 
$k(k-h_0+1) \alpha_k +  \alpha_{k-1}= r_{k-1} \, ,\quad k= h_0,h_0+1,\cdots$,
 where $r_k$ indicates terms
corresponding to the right-hand-side of~\nn{inho}.

In fact, this last problem can be neatly formulated in terms of an
inhomogeneous ordinary differential equation along the lines
of~\nn{Beq}. This surprisingly enables a complete and  explicit solution to
the problem. In total the treatment involves several subtleties;
to reveal and deal with each of these we break the treatment into a
number of stages.  
The first step is  that we express also the $O(x^{h_0})$ terms in~$\sff$
in the solution generating operator language 
\begin{equation}\label{further decompose}
\sff = \colon F_{h_0-2}(z) \colon \, f_0 + x^{h_0} \, \colon \H (z)\colon \, y\f_0\, ,
\end{equation}
where it now only remains to determine $\H(z)\in{\mathbb C}[[z]]$ (here $\f_0$ is given as in~\nn{fbarf}).
Inserting this, along with~\nn{spitsout} and~\nn{fbarf} into our equation~\nn{inho} gives
$$
x^{h_0-2}\Big((1-h_0)+xy\, \colon z \H(z)\colon\Big)\, \f_0
= x^{h_0-2}\, \colon\Big((1-h_0)\,  K^{\oh_0}(z) -2\,   z\frac{dK^{\oh_0}(z)}{dz}\Big)\colon\, \f_0\, .
$$
Here we used Corollary~\nn{corpid} and the fact that $\f_0\in\ce^{\Phi}[-d-w_0+1]$, along with the simple operator  identity
$x\, \colon K(z)\colon \, y=\colon zK(z)\colon$ for any $K(z)\in{\mathbb C}[[z]]$.
We can solve this equation using the formal solution operator methods explained in Section~\ref{first} (and in particular 
equation~\nn{formsol}). This gives the inhomogeneous, linear ODE 
\begin{equation}\label{inhom ODE}
\big(E(E+h_0-1)+z\big)[z\H(z)]=-(2E+h_0-1)[K^{\oh_0}(z)-1]\, .
\end{equation}
Clearly the Frobenius method yields a unique formal solution for $\H(z)$,
the leading terms of which we explicate below
$$
\H(z)=\scalebox{.95}{$ \frac{h_0+1}{h_0^2}$}\  - \ \scalebox{.95}{$ \frac{(h_0+2)(3h_0+1)}{4h_0^2(h_0+1)^2}$}\, z
\ + \ \scalebox{.95}{$ \frac{(h_0+3)(11h_0^2+18h_0+4)}{36h_0^2(h_0+1)^2(h_0+2)^2}$}\, z^2 $$ $$\qquad\quad\ 
\ \ - \ \scalebox{.95}{$ \frac{(h_0+4)(25h_0^3+98h_0^2+99h_0+18)}{288h_0^2(h_0+1)^2(h_0+2)^2(h_0+3)^2}$}\, z^3
\ + \ \cdots\, .
$$
The solution to all orders for $\H(z)=\sum_{k=0}^\infty\gamma_kz^k \in {\mathbb C}[[z]]$
is given by the recurrence
\begin{eqnarray*}
\gamma_0&=&\frac{h_0+1}{h_0^2}\, ,\\
\gamma_k&=&-\frac{1}{(k+1)(h_0+k)}\left[\gamma_{k-1}+\frac{(-1)^{k+1}(h_0+2k+1)}{(k+1)!\big(h_0+k\big)_{k+1}}\right]\, .
\end{eqnarray*}
\begin{remark}
Use of the computer package Maple gives the closed-form solution to this recursion:
$$
\gamma_k=\frac{(-1)^k\Big(h_0\sum_{j=0}^{k-1}\frac{h_0+2j+3}{(j+2)(h_0+j+1)}+h_0+1\Big)(h_0-1)!}{h_0(h_0+k)!(k+1)!}\, .
$$
\end{remark}

Putting together the polynomial $F_{h_0}(z)$ and power series $\H(z)$, $K^{\oh_0}(z)$ $\in$  ${\mathbb C}[[z]]$ 
(given by equations~\nn{polynomial},~\nn{inhom ODE} and~\nn{Koh}, respectively) we may now build a solution 
operator~$\widehat\cO$. Acting on sections 
of $\ce^\Phi[w_0]$, this is given by
\begin{eqnarray}\label{hatO}
\widehat\cO&=&\colon F_{h_0-2}(z)\colon -\frac{\colon z^{h_0} \, \H(z)\colon}{(h_0-1)!(h_0-2)!}\nonumber\\
&-&\frac
{x^{h_0-1}\big(\log x \ \colon K^{\oh_0}(z)\colon\   -\ \colon K^{\oh_0}(z)\colon\, \log \tau\big)y^{h_0-1}}{(h_0-1)!(h_0-2)!} \, .
\end{eqnarray}
By construction this has the required property 
\begin{equation} \label{yhat}
y\, \widehat\cO f_0 = 0 
\end{equation}
for any section $f_0$ of $\ce^\Phi[w_0]$. However, we have not yet
reached the ideal solution to the problem~\nn{extp3}. The point here is 
that~\nn{hatO}  is not independent of how $f_0$ is extended off
$\Sigma$ from given boundary data~$f_0|_{\Sigma}\in \Gamma
\ce^\Phi[w_0]|_\Sigma$. This statement is readily verified by explicitly
computing $\widehat \cO x f_1$ for some $f_1\in\Gamma\ce^\Phi[w_0-1]$
 \begin{equation}\label{leftover}
   \widehat \cO\,  x f_1 =   \frac{x\, \colon z^{h_0-2} K^{\oh_0}(z)\colon\, f_1}{\big((h_0-2)!\big)^2} \, .
\end{equation}
The last computation
relied on the second relation in Corollary~\ref{corpid} as well as
analog of the identity~\nn{pretty snazzy identity}, adapted to the
case where the operator $x$ appears on the right, rather than $y$ on
the left, namely
\begin{equation}\label{right snazziness}
  \colon K(z)\colon \, x = x\, \colon \big(z K''(z)+K(z)\big)\colon-x\,
 \colon K'(z)\colon\, h\, .
\end{equation}
There is however, no analog of the second relation in~\nn{ylogx} for
the commutator of the operator $y$ with $\log x$, because $x$ and
$\log\tau$ simply commute. This absence precisely explains the
leftover terms in~\nn{leftover} whilst at same time suggests a possible remedy.
Examining the operator~$\widehat\cO$ given above, we see that there is
a complete symmetry between the operator $x$ on the left and $y$ on
the right (remembering that $\colon z^k\colon =x^ky^k$) except for
where $\log\tau$ appears in the place where symmetry would have
suggested a term ``$-\log y$''.  
Remarkably it is possible to adjust~\nn{hatO} in such a way that
$-\log\tau$ does effectively play the {\it r\^ole} of the logarithm
of~$y$ for the algebraic purposes required.  Remembering that we may
always add some amount of a solution of the second kind, to the one
obtained already in~\nn{hatO}, we find that gives us sufficient leeway
to perform such a man\oe uvre.  
Our cure begins by observing that for any section $\f_0$ of $\ce^\Phi[-d-w_0+1]$ (so that $h\f_0=(2-h_0)\f_0=\oh_0\f_0$) we have the identity
\begin{equation}\label{leftlog}
[y,x^{h_0-1}\log x]\, \f_0 = (1-h_0)\,  x^{h_0-2}\f_0\, ,
\end{equation}
which was obtained using Corollary~\ref{corpid} and the algebra of $y$
with $\log x$ in~\nn{ylogx}.  What we would like is a close analog of
this result for commutators of the operator $x$ with $\log\tau$ and
powers of $y$. While {\it a priori} the existence of such is not at all
guaranteed,  it turns out that the Weyl averaged operator
 \begin{equation}\label{WeylO}
 \big(\log\tau\,  y^{h_0-1}\big)_{\rm W} := \frac{\log\tau \, y^{h_0-1}+y^{h_0-1}\, \log\tau}2\, ,\qquad h_0\in {\mathbb Z}_{\geq 1}\, ,
 \end{equation}
obeys 
\begin{equation}\label{rightlog}
\Big[ \big(\log\tau \, y^{h_0-1}\big)_{\rm W},x\Big]\, f_1=(1-h_0)\,  y^{h_0-2}f_1\, ,
\end{equation}
where $f_1$ is any section of $\ce^\Phi[w_0-1]$ (so that
$hf_1=(h_0-2)f_1$).  The validity of the identity~\nn{leftlog} is
essentially the reason why the introduction of the logarithm of $x$
enabled us to write the solution operator $\widehat\cO$ in~\nn{hatO}
annihilated by the action of $y$ from the left.  The new
identity~\nn{rightlog} exactly mimics that result, for the case now of $x$
acting from the right. This leads us to the new solution
generating operator:
\begin{eqnarray}\label{O}
\cO&=&\colon F_{h_0-2}(z)\colon -\frac{\colon z^{h_0} \, \H(z)\colon}{(h_0-1)!(h_0-2)!}\nonumber\\
&-&\frac
{x^{h_0-1} \log x \ \colon K^{\oh_0}(z)\colon\  y^{h_0-1}\  -\ x^{h_0-1} \ \colon K^{\oh_0}(z)\colon\, \big(\log \tau\,  y^{h_0-1}\big)_{\rm W}}{(h_0-1)!(h_0-2)!} \, .
\end{eqnarray}
It is then a matter of straightforward algebra, using the
relations~\nn{leftlog}, or~\nn{rightlog}, along with~\nn{pretty snazzy
  identity}, or ~\nn{right snazziness}, to establish the critical properties
$$
y \, {\cO} f_0=0,
\quad \mbox{and} \quad {\cO} x f_1=0,
$$ 
for arbitrary sections $f_0$ and $f_1$ of
$\ce^\Phi[w_0]$ and $\ce^\Phi[w_0-1]$, respectively. In fact for the
case of verifying $y {\cO} f_0=0$, this also follows from~\nn{yhat},
since by construction $\widehat\cO f_0$ and ${\cO} f_0$ differ by a
solution of the second kind.  For the case of showing ${\cO} x f_1=0$
we follow the idea of Proposition ~\ref{Kx}, although the situation
here is significantly more delicate: It is straightforward to verify
that, through the judicious introduction of the Weyl averaged
expression~\nn{WeylO} in~\nn{O}, we are again led to the equation
\nn{inhom ODE} on $z\H(z)$; the equation which $\H(z)$ solves, by its
definition.  
At this juncture, we also note that an easy variant of the computation
directly above shows that replacing $\log\tau$ by $\log\tau\ +\ x\,
t_1$ where $t_1$ is any section of $\ce[-1]$ does not change the
solution~$\cO f_0$. In other words, the solution only depends on
initial data $\tau|_\Sigma$ and is independent of the choice of
extension of this to $\ce[1]$ over $M$.

\begin{remark}
  One might wonder whether, by making a distinguished choice of second
  scale $\tau$, it could be possible to avoid introducing the Weyl
  ordered combination of operators $y$ and $\log\tau$ defined
  in~\nn{WeylO}. In fact it is straightforward to 
 prove that $y$ and
  $\log\tau$ do not commute when $I^2$ is non-vanishing.
\end{remark}

Now for the case $h_0=1$: We again search for a formal solution of the
form~\nn{fs4} with $h_0=1$, but recall in this instance we take
$\f_0|_\Sigma$ as the given initial data. This is required
by~\nn{inho} since the first term on the right-hand-side now
vanishes. We determine directly $\ssf$ in terms of $\f_0$
using~\nn{FF}. Then we write $\sff$ as in~\nn{further decompose} but
without the first term since~$f_0=0$ here. Thus the expansion
of~$\sff$ is given in terms of $\H(z)$, which (as for higher $h_0$
cases) is determined by the inhomogeneous ODE~\nn{inhom ODE}. To ensure that 
the solution is independent of how $\f_0|_\Sigma$ is extended off~$\Sigma$ we
use the same Weyl averaging technique as above. Thus, in
this case, the solution generating operator is given by the numerators
of the second and third terms in~\nn{O}, evaluated at $h_0=1$.

We have by now established the following
\begin{theorem}\label{logsol}
  Problem~\ref{extp3} has a canonical infinite order solution with
  $f_{h_0-1}=0$ when $1\neq h_0\in {\mathbb Z}_{\geq 2}$
\begin{equation}\label{punchl}
(f_0,\tau)\mapsto \cO f_0\, ,
\end{equation}
where $f_0$ and $\tau$ are, respectively, any sections of  $\ce^\Phi[w_0]$ and $\ce[1]$ (the solution  operator~$\cO$ is given explicitly in~\nn{O}).

If $h_0=1$, then an infinite order solution is given by
\begin{equation}\label{punchl1}
\big(\f_0,\tau\big)\mapsto \ocO \, \f_0\, ,
\end{equation}
where $\f_0$  and $\tau$ are, respectively, any sections of  $\ce^\Phi\big[\frac{1-d}{2}\big]$ and $\ce[1]$.
The solution  operator~$\ocO$ is
\begin{equation*}
\ocO\ =\ 
\log x \ \colon J_0\big(2\sqrt{z}\big)\colon\   -\ \colon J_0\big(2\sqrt{z}\big)\colon\, \log \tau+ \colon z \, \H (z)\colon
 \ .
\end{equation*}
\vspace{-2mm}

In each case the solution $\cO f_0$ (respectively $\overline\cO\,  \f_0$) is independent of how $\tau$ as well as $f_0$ (respectively
$\f_0$) extend $\tau|_\Sigma$ and $f|_\Sigma$ (respectively $\f|_\Sigma$) off~$\, \Sigma$ and thus determine well-defined solution
operators
$$
\cO : \big(\Gamma \ce^\Phi[w_0]|_\Sigma,\Gamma \ce[1]|_\Sigma\big) \to \Gamma \ce^\Phi[w_0]\, ,
$$
and 
$$
\ocO:\Big(\Gamma \ce^\Phi\big[\frac{1-d}{2}\big]|_\Sigma, \Gamma \ce[1]|_\Sigma\Big)\to
\Gamma \ce^\Phi\big[\frac{1-d}{2}\big]\,Ê.
$$
\end{theorem}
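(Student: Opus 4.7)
The plan is to verify the two key properties that define the solution operator, namely that $y\,\cO f_0 = 0$ (so that $I\cdot D$ applied to the output vanishes to all orders) and that $\cO\, x f_1 = 0$ for every $f_1 \in \Gamma\ce^\Phi[w_0-1]$ (so that the output depends only on $f_0|_\Sigma$), together with the analogous properties for $\ocO$ and independence under $\log\tau \mapsto \log\tau + x\,t_1$. The construction of $\cO$ in \nn{O} has been assembled precisely to make these checks reduce to algebraic identities in the operator algebra generated by $x$, $y$, $h$, $\log x$, and $\log\tau$, so the proof is an organised bookkeeping of those identities.

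First I would record the relevant commutation relations as operator identities valid on the relevant weighted log-density-tensor bundles: the $\frak{sl}(2)$ relations of Proposition \ref{slprop}, the generalised relation \nn{pidext}, the two relations \nn{ylogx} involving $\log x$, and the two ``snazzy'' identities \nn{pretty snazzy identity} and \nn{right snazziness} expressing how $y$ commutes past $\colon K(z)\colon$ and how $\colon K(z)\colon$ commutes past $x$. Each of these is already established (or follows from Proposition \ref{toadd2}, which is assumed). With these in hand, the verification that $y\,\cO f_0 = 0$ breaks into three pieces: the smooth piece $\colon F_{h_0-2}(z)\colon f_0$ contributes $\tfrac{1}{((h_0-2)!)^2} x^{h_0-2} y^{h_0-1} f_0$ by \nn{spitsout}; the correction $-\tfrac{1}{(h_0-1)!(h_0-2)!}\colon z^{h_0}\H(z)\colon y\f_0$, via \nn{pretty snazzy identity} and the ODE \nn{inhom ODE}, converts this into a combination of $\colon K^{\oh_0}(z)\colon$ and $\colon dK^{\oh_0}/dz\colon$ acting on $\f_0$; and the logarithmic piece, using \nn{leftlog}, exactly cancels those two contributions. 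Here $\f_0$ is linked to $f_0$ via \nn{fbarf}.

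Next I would verify the extension-independence property $\cO\, x f_1 = 0$. This is the subtler step and will be the main obstacle: a naive analogue of Proposition \ref{Kx} with $\widehat\cO$ leaves the residual term \nn{leftover} because $\log\tau$ commutes with $x$ while $\log x$ does not. The remedy is the Weyl-averaged operator $(\log\tau\,y^{h_0-1})_{\rm W}$ in \nn{WeylO}, whose commutator with $x$ is designed in \nn{rightlog} to mirror \nn{leftlog}. With this replacement I would compute $\cO\, x f_1$ by moving $x$ to the left through $\colon K^{\oh_0}(z)\colon$ using \nn{right snazziness}, through $\log x$ using \nn{ylogx}, and through $(\log\tau\,y^{h_0-1})_{\rm W}$ using \nn{rightlog}; after collecting terms one arrives at exactly the ODE \nn{inhom ODE} satisfied by $z\H(z)$, so the total vanishes. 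The fact that $\cO f_0$ is unchanged under $\log\tau \mapsto \log\tau + x\,t_1$ then follows by the same mechanism, since $x\,t_1$ commutes with all the $y$'s and $\log x$'s involved and, after Weyl averaging, contributes only terms that reproduce the same ODE.

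Finally, for the case $h_0 = 1$ I would observe that \nn{inho} degenerates: the inhomogeneous term proportional to $(1-h_0)$ vanishes identically, forcing $f_0 = 0$ and making $\f_0|_\Sigma$ the free initial data. The series $\ssf = \colon K^{\oh_0}(z)\colon \f_0$ with $\oh_0 = 1$ (so $K^{\oh_0} = J_0(2\sqrt{z})$ after suitable normalisation) is unchanged, and $\sff$ is given by $\colon z\H(z)\colon \f_0$ with the same $\H(z)$ satisfying \nn{inhom ODE}. The operator $\ocO$ is then just the $h_0 = 1$ specialisation of the last two terms of \nn{O} (with $f_{h_0-1} = 0$), and the two verifications $y\,\ocO\f_0 = 0$ and $\ocO\, x\f_1 = 0$ repeat the arguments above verbatim. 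The mapping properties of $\cO$ and $\ocO$ as displayed at the end of the theorem are then immediate from the weight-tracking of each constituent operator, completing the proof.
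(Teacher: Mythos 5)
Your proposal follows the paper's own route essentially verbatim: the same ansatz and operator identities (\nn{ylogx}, \nn{pretty snazzy identity}, \nn{right snazziness}, \nn{leftlog}, \nn{rightlog}), the same reduction to the ODEs \nn{Besse} and \nn{inhom ODE}, the same Weyl-averaging device \nn{WeylO} to secure $\cO\, x f_1=0$, and the same degeneration argument for $h_0=1$. The only (immaterial) difference is one of presentation — you verify the explicitly given operator $\cO$ rather than re-deriving it step by step — which is exactly the "straightforward algebra" the paper itself invokes at the corresponding point.
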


\begin{remark} \label{PEcase}
The case of $(M,c,\si)$ being Poincar\'e-Einstein.
For Problem ~\ref{extp3} we found that the coefficient of $\log \si$ satisfies 
$$
\f_0= - \frac{1}{(h_0-1)!(h_0-2)!} P_{h_0-1} f_0
$$
where $P_k$ is the
tangential operator from Theorem ~\ref{tanth}.  Any other solution of
the $I\cdot D f=0 $ equation, is a linear combination of this with a
solution of Problem ~\ref{extp2}.  

It is interesting to note the specialisations that arise if
$(M,c,\si)$ is a Poincar\'e-Einstein structure, which we now assume here.
Then the scale tractor $I$ is parallel, and the interior is
Einstein. This means that on the interior $(M_+,c)$ the powers of
$I\cdot D$ give the {\em interior GJMS operators}, when acting on densities
of the appropriate weight. This features precisely as follows.

Suppose that $j\in \mathbb{Z}_{\geq 1}$ and $h_0= 2j $ and so $f_0$
has conformal weight $w_0= j-\frac{d}{2}$.  Using that $(M,c,\si)$ is
conformally Einstein, from~\cite{powerslap} we have
that $y^j f_0 = x^j Y_j f_0$ where (up to a constant) $Y_j$ is the
order $2j$ GJMS operator (in general in fact the generalisation
thereof to a tractor twisted version) on $(M,c)$. Thus $y^{h_0-1} f_0
= y^{j-1}x^jY_j f_0$ and using~\nn{pid} this is $O(x)$. Thus by
~\nn{fbarf} $\f_0|_\Sigma=0$ and by Proposition~\ref{Kx}, also $\colon
K^{\oh_0} \colon \, \f_0 =0$. Thus the canonical solution has no log terms.

In summary, if $(M,\g)$ is Poincar\'e--Einstein then: 
\begin{itemize}

\item If $h_0=3,5,7,\ldots$,  then $P_{h_0-1}$ is the 
(tractor twisted) GJMS operator. 
\vspace{1mm}

\item If $h_0=2,4,6,\ldots$, then $P_{h_0-1}$ is trivial, $\f_0=0$ and there are no 
log  terms in the canonical solution. 

\end{itemize}

\end{remark}

Finally recall that we introduced the scale choice $\tau$ so that we
could describe a solution with a well-defined conformal weight,
despite the presence of log terms. Having solved the problem it is
most natural to finally express the answer in the scale determined by~$\tau$, 
that is in terms of the metric $g^\tau:=\tau^{-1}\bg$. In this
trivialisation of the weight bundles, $\tau$~itself is represented by
the constant function 1. Then since $\log 1=0$ the expression, in this
scale, for the solution is obtained from that found above by
simply formally 
replacing~$\si$ and its powers by $r=\si/\tau$ and its powers.

\subsection{The log density problem}\label{logdpr}
We now consider the problem of formally solving 
$$
I\cdot D\,  U = 0
$$
for a log density $U\in\Gamma\cf[1]$. We again treat a Dirichlet type problem
where~$U|_\Sigma$ is the initial data. Given an arbitrary smooth extension $U_0\in\Gamma\cf[1]$
of~$U|_\Sigma$ we study the following problem:
\begin{problem}\label{logprob}
Given $U_0|_\Sigma$ and an arbitrary extension of this to $\cf[1]$ over $M$, find $U_i\in\ce[-i]$ (over $M$, $i\in {\mathbb Z}_{\geq1}$), such that
$$
U^{(l)}:=U_0 + \si \, U_1+\si^2\,  U_2 +\cdots+O(\sigma^{l+1})
$$
solves $I\cdot D\,  U=O(\sigma^{l})$, off $\Sigma$, for $l\in{\mathbb N}\cup\infty$ as high as possible.
\end{problem}
\begin{remark}
 Since the sum of a log density and weight~0 conformal density ({\it i.e.}, a function) is again a log density, all terms in the expansion $U_i$ with $i\geq1$ are taken to be conformal densities with weight such that the product $\sigma^i U_i$ is a weight zero 
 conformal density. Moreover, since $I\cdot D \, U_0$ is a weight~$-1$ conformal density (even though $U_0$ is a log density), the solution to the above
 problem can be achieved by applying the results developed in previous sections for conformal densities, in particular those for $h_0=d$ (because then $w_0=0$).
\end{remark}

In light of the above remark, from Theorem~\ref{logsol} we directly obtain the following

\begin{corollary}
Problem~\ref{logprob} has a canonical infinite order solution with
$$
(U_0,\tau)\mapsto \cO|_{h_0=d}\,  f_0\, ,
$$
where $U_0$ and $\tau$ are, respectively, any sections of  ${\mathcal F}[1]$ and $\ce[1]$ and the solution  operator $\cO|_{h_0=d}$ is 
precisely the one given in~\nn{O} save that $h_0$ is set to the value $h_0=d$.

The solution $\cO|_{h_0=d}\,  U_0$ is independent of how $\tau$ as well as $U_0$  extends $f|_\Sigma$ (respectively~$\f|_\Sigma$) off~$\, \Sigma$ and thus determines a well-defined solution
operator
$$
\cO|_{h_0=d} : \big(\Gamma {\mathcal F}[1]|_\Sigma,\Gamma
\ce[1]|_\Sigma\big) \to \Gamma {\mathcal F}[1]\, .
$$
\end{corollary}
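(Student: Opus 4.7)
The plan is to show that the corollary follows from Theorem~\ref{logsol} at $h_0=d$ essentially by inspection, once one appreciates that the algebraic machinery developed there continues to make sense when the input is a log density rather than a conformal density. The decisive structural observation is equation~\nn{FGrQform}: although $U_0\in\Gamma\cf[1]$ is a log density, $I\cdot D\, U_0$ is a \emph{conformal} density of weight $-1$, so that the operator $y=-I^{-2}\,I\cdot D$ sends $\Gamma\cf[1]$ into $\Gamma\ce[-1]$, and by iteration $y^k U_0\in\Gamma\ce[-k]$ for every $k\geq 1$. This is exactly the weight pattern that would arise if we were applying $y^k$ to a weight-$0$ function, and it is the reason the ``effective'' value of $h_0$ to use is $d$ (corresponding to $w_0=0$), as highlighted in the remark preceding the corollary.

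First I would verify that $\cO|_{h_0=d}\,U_0$, as a formal expression built from the components of~\nn{O} with $h_0$ replaced by $d$, takes the shape required by Problem~\ref{logprob}. The polynomial piece $\colon F_{d-2}(z)\colon\,U_0$ produces $U_0+\si U_1+\cdots+\si^{d-2}U_{d-2}$ with $U_k\in\Gamma\ce[-k]$; the piece $\colon z^d\,\H(z)\colon\,U_0$ contributes the higher-order conformal density tail; and the logarithmic pieces at order $\si^{d-1}$ carry the unavoidable $\log\si-\log\tau$ combination, where $\log\si$ is itself a weight-$1$ log density that combines naturally with the intrinsic log-density character of $U_0$. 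The leading term is $U_0$ so the boundary data $U_0|_\Sigma$ is reproduced exactly.

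Next I would establish $y\cdot\cO|_{h_0=d}\,U_0=0$, which is the formal statement $I\cdot D(\cO|_{h_0=d}\,U_0)=0$. The derivation is line-for-line the one carried out in Section~\ref{logs}: it uses only the $\mathfrak{sl}(2)$ commutators of Proposition~\ref{slprop}, the polynomial identities of Corollary~\ref{corpid} together with their non-integer-power extension~\nn{pidext}, the logarithmic commutators~\nn{ylogx}, and the operator identities~\nn{pretty snazzy identity}, \nn{right snazziness}. Crucially, as the paper explicitly notes in the paragraph following~\nn{ylogx}, each of these holds not only on conformal densities but also on log densities (and their tensor products with weighted tractor bundles), which is precisely what is needed here. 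The main obstacle is keeping track of the fact that $h=d+2\w$ does \emph{not} act diagonally on a log density: by~\nn{law of the logs}, $\w\,U_0=w_0=1$ is a constant rather than multiplication by~$1$, so $h\,U_0=d\,U_0+2$ is a log density plus a scalar. One must verify that this additive shift, when propagated through the commutators $[y,x^k]$ and $[y,x^{k-1}\log x]$ used in the proof of Theorem~\ref{logsol}, generates only lower-order terms that combine harmlessly with the coefficients already selected in $F_{d-2}(z)$, $\H(z)$ and $K^{\oh_0}(z)$.

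Finally, independence of $\cO|_{h_0=d}\,U_0$ from the extensions of $U_0$ and of $\tau$ off $\Sigma$ is inherited directly from Theorem~\ref{logsol}. The identity $\cO|_{h_0=d}(\si f_1)=0$ for any $f_1\in\Gamma\ce[-1]$, which controls the $U_0$-extension dependence, already has a proof that takes only conformal densities as input and so is unchanged. The invariance under $\log\tau\mapsto\log\tau+\si\,t_1$ follows from the same Weyl-averaging argument via~\nn{WeylO} used there. Assembling the three steps yields the well-defined solution operator $\cO|_{h_0=d}:(\Gamma\cf[1]|_\Sigma,\,\Gamma\ce[1]|_\Sigma)\to\Gamma\cf[1]$ claimed by the corollary.
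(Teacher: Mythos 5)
Your proposal is correct and follows essentially the same route as the paper: the paper's own proof consists of the remark that $I\cdot D\,U_0$ is a weight $-1$ conformal density (so all terms $U_i$, $i\geq 1$, live in the conformal-density world with the weight pattern of $w_0=0$, i.e.\ $h_0=d$), after which the corollary is declared to follow directly from Theorem~\ref{logsol}. Your additional care about $h\,U_0=d\,U_0+2$ is not wrong, but it is a non-issue in the paper's formulation, since the weight operator is only ever applied to $y^kU_0$ and to the coefficients $U_k$ with $k\geq1$, all of which are genuine conformal densities, and the constant produced by $\w$ on a log density is annihilated by $y$.
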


\begin{remark} \label{QPEcase} In the spirit of Remark ~\ref{PEcase}
  above some comments, extending the Corollary, are worthwhile. These
  again use Theorem ~\ref{logsol}.

First note that, in the solution, the coefficient of $\log \si$ is
\begin{equation}\label{logt1}
- \frac{1}{(n)!(n-1)!} P_{n} U_0
\end{equation}
where $P_n$ is the tangential operator from Theorem ~\ref{tanth}.  It
is natural to consider this in the case that $U_0= - \log \tilde\mu$,
where $\tilde\mu$ is some extension to $\Gamma\ce_+[1]$ of a section
$\mu\in \Gamma\ce_+[1]|_\Sigma$ so that $U_0\in {\mathcal F}[-1]$.
(Recall that for log densities pointwise multiplication is a bundle isomorphism, so 
here we are simply multiplying the entire solution by a factor $-1$.)

Then specialising to the case of $(M,c,\si)$ Poincar\'e-Einstein we
have $I^2=1$, and from~\nn{FGrQform} we see that in the interior scale
the problem being solved is 
\begin{equation}\label{PElogform}
-\Delta^{\g} \, U = n, 
\end{equation}
In this case, from Theorem ~\ref{Qthm}, we get
\begin{itemize}

\item If $n$ is even then the log coefficient~\nn{logt1} is 
$$
\frac{\scalebox{1.1}{$ Q^{g^\mu_\Sigma}$}\ }{2^{n-1}\big(\frac n2-1\big)!\, \big(\frac n2\big)!} 
$$
where $Q^{g^\mu_\Sigma}$ is the Q-curvature of $g^\mu_\Sigma$.

\vspace{1mm}
\item If $n$ is odd then the log term vanishes, and the solution is smooth.

\end{itemize}
In fact~\nn{PElogform} is precisely the problem studied by
\cite{FGrQ} and~\cite{FH} (noting the different sign of Laplacian these
sources use)  and these results are consistent with their findings.
\end{remark}

\subsection{The formal solutions in the interior scale} \label{total}
Summarising the above, away from the weights $w_0= \{
\frac{j-d}{2}\mid j\in \mathbb{Z}_{\geq 1}\}$, we found two
independent formal solutions to the equation $I\cdot D f=0$ along
$\Sigma$. These take the form
$$
F=f_0+\si f_1 +\cdots \  \mbox{ and } \ \ \sigma^{h_0-1}G = \sigma^{h_0-1} (\f_0+\si \f_1+\cdots )\, ,
$$
each of which is a density valued section of $\ce^\Phi[w_0]$ with $w_0\notin
\{\frac{j-d}{2}\mid j\in \mathbb{Z}_{\geq 1}\}$. 

We may eliminate the weight by choosing a scale $\tau\in \Gamma\ce[1]$
which extends to the boundary $\Sigma$, so equivalently a metric
$\overline{g}=\tau^{-2}\bg \in c$.
Then the solutions, in terms of the scale $\overline{g}$, are
$F^{\overline{g}}= \tau^{-w_0} F$ and $G^{\overline{g}}= \tau^{-w_0}
G$ in $\ce^\Phi=\ce^\Phi[0]$.  Note that $r:=\si/\tau$ is the function
$r$ which gives the density $\si$ in the scale $\tau$. Thus working in
the metric $\overline{g}$ the solutions take the form
$$
 F^{\overline{g}}=f_0+ r f_1 +\cdots \  \mbox{ and }\ \  \big[\sigma^{h_{0}-1}G\big]^{\overline{g}} = r^{h_0-1}(\f_0+ r \f_1+\cdots )
$$
{\it where now the $f_i$ and $\f_i$ are of weight 0} (but we have
retained the earlier notation for simplicity). Explicit expressions for
the unweighted tractor fields $f_i$ and $\f_i$ follow from
Proposition~\ref{twosol}.

On the other hand, for the purpose of comparison with the existing
literature, it is interesting to understand the solutions in terms of
the canonical generalised scale $\si$.  Away from $\Sigma$, we have
$F^{\g}= \si^{-w_0} F$ or in terms of the scale $\overline{g}$,
$F^{\g}= r^{-w_0}F^{\overline{g}}$, and similarly for $\f^{\g}$ which
may then be expressed in terms of $G^{\overline{g}}$. Thus, in terms
of the canonical scale $\g$, a general formal solution takes the form
$$
f= r^{n-s}F^{\overline{g}}+ r^s G^{\overline{g}}
$$
where, recall, $s:=w_0+n$. This equation (specialized to conformal
densities rather than generic tractors), alongside the
expression~\nn{IdotDsc} for the $I\cdot D$ operator in the scale $\g$,
gives a precise dictionary between our methods and the scattering
problem stated in equation 3.2 of~\cite{GrZ}.

At weights $w_0\in\{\frac{j-d}{2}\mid j\in \mathbb{Z}_{\geq 1}\}$
where log solutions appear, for the scale $\overline g=\tau^{-2}\bg$
we take the density $\tau$ extending to the boundary to be the same as
that appearing in the log terms in~\nn{logf}. In that choice of scale
$\tau$ itself is represented by the constant function~$1$, thus a
similar analysis to above shows that in the scale $\g$ we have
solutions of the form
\begin{equation}\label{logint}
f= r^{n-s}F_{h_0-2}^{\overline{g}}+ r^s \log r\,   \F^{\overline{g}} +  r^{s+1} \H^{\overline{g}}  + r^s G^{\overline{g}} \, ,
\end{equation}
 where now $s\in\{\frac{j+n-1}{2}\mid
j\in \mathbb{Z}_{\geq 1}\}$, the field $ G^{\overline{g}}$ (the
``second solution'' part) is of the form as above, and the fields
$F_{h_0-2}^{\overline{g}}$, $\F^{\overline{g}}$ and
$\H^{\overline{g}}$ take the form
$$
 F_{h_0-2}^{\overline{g}} =f_0+ r f_1 +\ \cdots\ +r^{2s-n-1}f_{2s-n-1} \, ,\quad \F^{\overline{g}} = \f_0+ r \f_1+\ \cdots 
$$
and
$$ 
 \H^{\overline{g}}=b_0 + r b_1 + \ \cdots\, .
$$
These last three mentioned terms are really part of the single log
type solution and the details can be explicitly read off from
Theorem~\ref{logsol}.  In the special case $s=\frac{n}2$ (so that
$h_0=1$), the polynomial term $F_{h_0-2}^{\overline{g}}$ is
omitted. For comparison with the existing literature, note that the
non-log terms $r^{n-s}F_{h_0-2}^{\overline{g}}$, $r^{s+1}
\H^{\overline{g}}$, and $ r^s G^{\overline{g}} $ in~\nn{logint} could
be swept together into a single term of the form $r^{n-s}F$, with $F$
of the form $ f_0+ r f_1 +\ \cdots\ $, so we have
$$
f= r^{n-s}F +  r^s \log r\,   \F^{\overline{g}}, 
$$ 
but this hides how independent solutions combine to give the general solution.

Continuing in this weight range $w_0\in\{\frac{j-d}{2}\mid j\in
\mathbb{Z}_{\geq 1}\}$, for almost Einstein structures
({\it e.g.} Poincar\'e-Einstein, the case considered in~\cite{GrZ}), when
$h_0$ is even so $s\in\{\frac{j+n-1}{2}\mid j\in 2\mathbb{Z}_{\geq
  1}\}$) there are no log terms in the solution, as explained in
Remark~\ref{PEcase}. Indeed, in~\cite{de Haro,GrZ}, by differing methods, the authors found a log obstruction 
only for $2s-n$ even. Also, essentially, the same analysis
as above applied to the log density problem of Section~\ref{logdpr}
confirms the form of solution predicted for the equivalent interior
problem formulated in~\cite{FGrQ}.

Finally, to complete a circle of ideas, let us write out the operator
$I\cdot D$ in the scale $\tau$ such that the function~$r$ above is a
defining function in the sense of~\cite{GrL}. In that case the metric
$\g=\frac{\overline g}{r^2}$ where $\overline g$ extends to the
boundary and has normal form
$$\overline g = dr^2 + h_r\, .$$
Then, an easy application of~\nn{degLa} gives the expression for
$I\cdot D$ acting on scalar densities of weight~$w$ and we find
$$
I\cdot D\stackrel{\overline g}{=}-\big(r\partial_r
-2s+n+1\big)\partial_r - H_r \big(r\partial_r +n-s\big) -r
\Delta_{h_r}\, .
$$
with $H_r=\frac12h^{ij}\partial_r h_{ij}$ and $h_{ij}$ is the metric defined by $h_r$ at fixed $r$.
This is the operator employed in the proof of Proposition~4.2 of~\cite{GrZ}.

\section{The Fefferman--Graham ambient metric and 
almost Einstein spaces} \label{ambsect}

Recall from Section ~\ref{dlogd} that a conformal structure (of any
signature $(p,q)$) is equivalent to the ray bundle $\pi:\cG\to M^d$ of
conformally related metrics.  Let us use $\rho $ to denote the ${\Bbb
  R}_+$ action on $ \cG$ given by $\rho(t) (x,g_x)=(x,t^2g_x)$.  An
{\em ambient manifold\/}  is a smooth $(d+2)$-manifold $\aM$ endowed
with a free $\Bbb R_+$--action $\rho$ and an $\Bbb R_+$--equivariant
embedding $i:\cG\to\aM$.  We write $\X\in\Gamma(T \aM)$ for the
fundamental field generating the $\Bbb R_+$--action.  That is, for
$f\in C^\infty(\aM)$ and $ u\in \aM$, we have $\X
f(u)=(d/ds)f(\rho(e^s)u)|_{s=0}$.  For an ambient manifold $\aM$, an
{\em ambient metric\/} is a pseudo--Riemannian metric $\h$ of
signature $(p+1,q+1)$ on $\aM$ satisfying the conditions: (i) ${\Cal
L}_{\sX}\h=2\h$, where $\Cal L_{\sX}$ denotes the Lie derivative by
$\X$; (ii) for $u=(x,g_x)\in \cG$ and $\xi,\eta\in T_u\cG$, we have
$\h(i_*\xi,i_*\eta)=g_x(\pi_*\xi,\pi_*\eta)$. In~\cite{FGast} (and see
\cite{FGrnew}) Fefferman and Graham considered formally the Gursat
problem of obtaining $\Ric(\h)=0$. They proved that for the case of
$d=2$ and $d\geq 3$ odd this may be achieved to all orders, while for
$d\geq 4$ even, the problem is obstructed at finite order by a natural
2-tensor conformal invariant (this is the Bach tensor if $d=4$, and is
called the Fefferman--Graham obstruction tensor in higher even
dimensions); for $d$ even one may obtain $\Ric(\h)=0$ up to the
addition of terms vanishing to order $d/2-1$. (See~\cite{FGrnew} for
the statements concerning uniqueness. For extracting results via
tractors we do not need this, as discussed in {\it e.g.}\
\cite{CapGoamb,GoPetCMP}.) We shall henceforth call any (approximately
or otherwise) Ricci-flat  metric on $\aM$  a {\em Fefferman--Graham
  metric}.   In the subsequent
discussion of ambient metrics all results can be assumed to hold
formally to all orders unless stated otherwise.

In the following discussion we typically use bold symbols or tilded
symbols for the objects on $\tilde{M}$. For example
$\boldsymbol{\nabla}$ denotes the Levi-Civita connection on
$\tilde{M}$.  
Familiarity with the 
treatment of the Fefferman--Graham metric, as in {\it e.g.}~\cite{CapGoamb,GoPetobstrn} 
or~\cite{BrGodeRham}, will be assumed.
In particular, we
shall use that suitably homogeneous tensor fields of
$\tilde{M}|_\cG$ correspond to tractor fields. This correspondence is
compatible with the Levi-Civita connection in that each weight zero
tractor field $F$ on $M$ corresponds to (the restriction to $\cG$ of)
a homogenous tensor field $\aF$ (of the same tensor rank) on $\aM$
with the property that it is parallel in the vertical direction, that
is $\aNd_{\sX} \aF=\X^A \aNd_A \aF= 0$ along $\cG$.  The metric $\h$
and its Levi-Civita connection $\aNd$ on~$\aM$ determine a metric and
connection on the tractor bundles, and by~\cite[Theorem 2.5]{CapGoamb}
this agrees with the normal tractor metric and connection. 
We use abstract indices in an obvious way on
$\aM$ and these are lowered and raised using $\h_{AB}$ and its inverse
$\h^{AB}$.

We shall say $\aF$ is homogeneous of {\em weight} $w_0\in \mathbb{C}$
if $\aNd_{\sX} \aF=w_0 \aF$, and this corresponds to a tractor field
$F$ of weight $w_0$.  We shall always take such fields to be extended
off $\cG$ smoothly and so that $\aNd_{\sX} \aF= w_0 \aF$ on
$\aM$. Along $\cG$ then $\aNd_{\sX}$, as applied to tensor fields of
well defined weight, gives an ambient realisation of the weight
operator. Note also that if $\tau \in \Gamma \ce_+[w_0]$  and $\boldsymbol{\tau}$ is the
corresponding ambient function, positive and homogeneous of weight
$w_0$, then
$$
[\aNd_{\sX} ,\log \boldsymbol{\tau} ]= w_0,
$$
compatible with our extension of the weight operator to log densities.

In this picture the operator $ \aD_A= \big(d+2\, w_0 -2\big) \aNd_A -
\aX_A\, \al $ on tensors homogeneous of weight $w_0$ corresponds to
the tractor-$D$ operator as applied to tractors of weight $w_0$.  Thus
we equivalently view this as a restriction of
$$
\aD_A= \aNd_A(d+2\aNd_{\sX}-2) +\aX_A\, \bs{\Delta}.
$$
This acts tangentially along the submanifold $\cG$ in $\aM$,
\cite{BrGodeRham} and~\cite{GoPetobstrn}, and is compatible with our
treatment of log-densities.

Given a (generalised) scale $\si\in \Gamma\ce[1]$ on $M$, we shall write
$\asi$ for the corresponding homogeneous weight 1 function on $\cG$
and its homogeneous extension to $\aM$.  Then with 
$$
\I_A:=\frac{1}{d}\aD_A \asi
$$
a restriction of the
differential operator
\begin{equation}\label{IdotDamb}
\I\cdot \aD: =  \aNd_{\I} (d+2\aNd_{\sX}-2) - \tilde \sigma \al
\end{equation}
(on $\aM$)  lifts the operator 
 $I\cdot D$ on $M$, enabling calculations on $\aM$. An important example
of such  is to compute the commutator
\begin{eqnarray*}
[\I\cdot \aD,\log\tilde\si]&=&
2\aNd_{\I}+\frac{(\aNd_{\I}\tilde\si)}{\tilde \si}\big(d+2\aNd_{\sX}-2\big)\\[1mm]
&&-\tilde\si\big(\aNd_A\frac{(\aNd^A\tilde\si)}{\tilde \si}+\frac{(\aNd_A\tilde\si)}{\tilde \si}\aNd^A\big)\\[3mm]
&=&-\frac{2}{d}(\al\tilde\si) \aNd_{\sX}
+\frac{(\aNd^A\tilde\si)(\aNd_A\tilde\si)-\frac1d \tilde \si (\al\tilde\si)}{\tilde \si}\big(d+2\aNd_{\sX}-2\big)\\[1mm]
&&-(\al\tilde\si)+\frac{(\aNd^A\tilde\si)(\aNd_A\tilde\si)}{\tilde \si}\\[3mm]
&=&
\frac{(\aNd^A\tilde\si)(\aNd_A\tilde\si)-\frac2d \tilde \si (\al\tilde\si)}{\tilde \si}\big(d+2\aNd_{\sX}-1\big)\, .
\end{eqnarray*}
Here the first line used the identity $[\aNd_{\sX},\log\tilde\s]=1$
while the second equality relies on the expression for the ambient
scale tractor $\I_A=\aNd_{\I}\tilde\si-\frac1d \aX_A \al\tilde\si$.
From this we recognize that the last the line of the above equals
$\frac{\I^2}{\tilde\si}(d+2\aNd_{\sX}-1)$, along $\cG$, and have therefore,
remembering that $y=-\frac1{I^2}I\cdot D$, established the following result.
\begin{proposition}\label{toadd2}
Suppose that $(M,c,\sigma)$ is such that $I^2$ is nowhere vanishing.
Then on any section of a weighted tractor bundle
$$
[y,\log x]
=-x^{-1}\ (h-1)\, .
$$
\end{proposition}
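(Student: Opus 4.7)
The plan is to lift the commutator to the Fefferman--Graham ambient picture, where the three operators in play admit clean tensorial realisations. Specifically, along $\cG$, the operator $\I\cdot\aD$ in~\nn{IdotDamb} lifts $I\cdot D$, the homogeneous function $\tilde\sigma$ lifts $\sigma=x$ (so $\log\tilde\sigma$ lifts $\log x$), $\aNd_\sX$ acts as the weight operator $\w$, and $\I^2=\I^A\I_A$ restricts to $I^2$. Since $\aD_A$ is tangential along $\cG$, whatever identity we establish among ambient operators acting on homogeneous tensor fields descends verbatim to the corresponding tractor identity.

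First, I compute $[\I\cdot\aD,\log\tilde\sigma]$ using Leibniz and the two elementary facts $[\aNd_\sX,\log\tilde\sigma]=1$ (since $\tilde\sigma$ is homogeneous of weight $1$) and
$$\I_A = \aNd_A\tilde\sigma-\tfrac{1}{d}\aX_A\,\al\tilde\sigma.$$
The $\aNd_\I(d+2\aNd_\sX-2)$ term contributes $2\aNd_\I+\tfrac{\aNd_\I\tilde\sigma}{\tilde\sigma}(d+2\aNd_\sX-2)$. The $-\tilde\sigma\,\al$ term contributes $-\al\tilde\sigma+\tfrac{(\aNd^A\tilde\sigma)(\aNd_A\tilde\sigma)}{\tilde\sigma}-2\aNd_{(\aNd\tilde\sigma)}$, using the standard commutator of a Laplacian with $\log$ of a function. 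The first-order gradient pieces combine to $2(\aNd_\I-\aNd_{(\aNd\tilde\sigma)})=-\tfrac{2}{d}(\al\tilde\sigma)\aNd_\sX$, which reassembles with the remaining scalar terms into the single expression
$$\frac{(\aNd^A\tilde\sigma)(\aNd_A\tilde\sigma)-\tfrac{2}{d}\tilde\sigma\,\al\tilde\sigma}{\tilde\sigma}\bigl(d+2\aNd_\sX-1\bigr).$$
The fraction is precisely $\I^2/\tilde\sigma$, since $\I^AI_A=(\aNd^A\tilde\sigma)(\aNd_A\tilde\sigma)-\tfrac{2}{d}\tilde\sigma\,\al\tilde\sigma$ along $\cG$. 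This is the computation displayed immediately before the Proposition; so along $\cG$ we obtain $[\I\cdot\aD,\log\tilde\sigma]=\tfrac{\I^2}{\tilde\sigma}(d+2\aNd_\sX-1)$, which translates to the tractor identity $[I\cdot D,\log\sigma]=\tfrac{I^2}{x}(h-1)$.

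It remains to pass from $I\cdot D$ to $y=-\tfrac{1}{I^2}I\cdot D$. The factor $I^2$ is a weight-zero scalar function (a section of $\ce[0]$), hence commutes with $\log x$. Therefore
$$[y,\log x]=-\tfrac{1}{I^2}[I\cdot D,\log x]=-\tfrac{1}{I^2}\cdot\tfrac{I^2}{x}(h-1)=-x^{-1}(h-1),$$
as required. The one genuinely non-trivial step is the cancellation in the ambient computation that yields the factor $\I^2$; this is where the definition of the scale tractor enters crucially. Everything else is bookkeeping and the dictionary between the ambient and tractor formalisms, which has been set up earlier in the section.
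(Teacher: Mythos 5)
Your proposal is correct and follows essentially the same route as the paper: the paper's proof \emph{is} the ambient computation of $[\I\cdot \aD,\log\tilde\si]$ displayed immediately before the Proposition, yielding $\frac{\I^2}{\tilde\si}(d+2\aNd_{\sX}-1)$ along $\cG$, followed by division by $-I^2$ (which, being a weight-zero scalar, commutes with $\log x$). Your intermediate steps — the contribution of each term of $\I\cdot\aD$, the recombination of the gradient pieces via $\I_A=\aNd_A\tilde\si-\frac1d\aX_A\al\tilde\si$, and the identification of the prefactor with $\I^2/\tilde\si$ using $\h(\X,\X)=0$ along $\cG$ — match the paper's calculation.
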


\subsection{The almost Einstein setting and
  Theorem~\ref{tanein}} \label{aefg} Here we consider the case
$(M^d,c,\si)$ almost Einstein, so $I_A=\frac{1}{d}D_A\si$ is
parallel. Let us first consider the case of $d$ odd.  The homogeneous
degree 1 function $\asi$ on $\cG$, corresponding to $\si$, can be
extended off~$\cG$ smoothly to a homogeneous degree 1 function on
$\aM$ that also satisfies $\al \asi=0$ formally to all
orders~\cite{GJMS}. In this case, which we henceforth assume, $\I_A$
is parallel on~$\aM$~\cite{Goal}.  (Reference~\cite{Goal} treats the
case of Riemannian signature $(M,c)$, and $I^2=1$, there is little
difference in moving to general signature and constant $I^2\neq 0$.)
 
Let us assume then that the ambient manifold $\aM$ is equipped with a 
  parallel
vector $\I^A=\frac 1d \aD^A \tilde \si$, of non-zero length, and the zero locus
$\tilde{\Sigma}=\mathcal{Z}(\asi)$ of $\asi=\aX^A \I_A$ is
non-empty. Note that $\aNd \tilde{\sigma}$ is nowhere vanishing on
$\tilde{\Sigma}$, which is thus necessarily a hypersurface. By homogeneity,
$\tilde{\Sigma}\cap \cG=\pi^{-1} (\Sigma )$, where $\Sigma=
\mathcal{Z}(\si)$.

Let $\tilde{f}_0$ be a section of a tensor bundle on $\aM$ which is
homogeneous of weight $\frac{k-n}{2}$, where $n=d-1$. We wish to
consider $(\I\cdot\aD)^k \tilde f_0$ along $\tilde{\Sigma}$. By the
lift of the calculations from Section~\ref{tanS} we conclude that
$(\I\cdot\aD)^k$ acts tangentially on $\tilde{f}_0$, along
$\tilde{\Sigma}\cap \cG$. In fact it is easily verified that $\tilde
\si$, $-\frac{1}{\I^2}\I\cdot\aD $ and $(d+2\aNd_{\sX})$ generate an
$\frak{sl}(2)$ (compatible in the obvious way with $\frak{g}$) 
and that $(\I\cdot\aD)^k$ acts tangentially on
$\tilde{f}_0$, along $\tilde{\Sigma}$.
 Thus to simplify
calculations we extend smoothly $\tilde{f}_0$ smoothly off
$\tilde{\Sigma}$ by the equation $\I^A\aNd_A \tilde{f}_0=0$. 
  It is easily
verified that (for any weight $w_0$) this is consistent with the
homogeneity assumption $\aNd_{\sX} \tilde{f}_0=w_0 \tilde{f}_0$.

For $\tilde{f}_0$ of any weight $w_0$, and satisfying $\I^A\aNd_A
\tilde{f}_0=0$  we have 
\begin{eqnarray*}
(\I\cdot\aD)^k \tilde f_0&=&-(\I\cdot \aD)^{k-1} (\tilde\sigma \al \tilde f_0)
\\[2mm]&=&- \tilde \si (\I\cdot \aD)^{k-1}\tilde\al  f_0-\I^2(k-1)(d+2w_0-k-2) (\I\cdot \aD)^{k-2} \al \tilde f_0\, .
\end{eqnarray*}
On the other hand using that $\I$ is parallel, we have that
$[\I^A\aNd_A,\al]$~$=$~$0$, on tensor fields over $\aM$. These 
 observations yield the following useful identity.
\begin{lemma}\label{alem} 
\begin{eqnarray} 
\!(\I\cdot \aD)^{k-2j} \al^j \tilde f_0 \!&\!\!=\!\!&\!\! -\tilde \sigma (\I \cdot \aD)^{k-2j-1} \al^{j+1} \tilde f_0\nonumber\\[2mm]
&&\!\!-\I^2 (k-2j-1) (d+2w_0-k-2j-2) (\I.\aD)^{k-2j-2} \al^{j+1} \tilde f_0 ,\nonumber
\end{eqnarray}
where $w_0$ is the homogeneity weight of $\tilde f_0$.
\end{lemma}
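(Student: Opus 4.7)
My plan is to view the lemma as an immediate iteration of the base-case identity that is displayed (and proved) in the line immediately preceding the statement, namely
\begin{equation*}
(\I\cdot\aD)^{k} \tilde f_0 \;=\; -\tilde\sigma (\I\cdot\aD)^{k-1} \al \tilde f_0 - \I^2 (k-1)(d+2w_0-k-2)(\I\cdot\aD)^{k-2}\al \tilde f_0,
\end{equation*}
which holds for any homogeneity weight $w_0$ tensor $\tilde f_0$ satisfying $\I^A\aNd_A \tilde f_0=0$. The general statement is precisely this formula with the substitutions $\tilde f_0 \leadsto \al^j \tilde f_0$, $k \leadsto k-2j$, and $w_0 \leadsto w_0 - 2j$. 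So the task reduces to verifying that these substitutions are legitimate.

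The first step is to check that $\al^j \tilde f_0$ lies in the kernel of $\I^A \aNd_A$. This is where the almost Einstein hypothesis enters: since $\I$ is parallel on $\aM$, the directional derivative $\I^A \aNd_A$ commutes with each $\aNd_B$ and hence with the Laplacian $\al=\aNd^A\aNd_A$. By induction on $j$ then $[\I^A \aNd_A,\al^j]=0$, so $\I^A\aNd_A(\al^j \tilde f_0)=\al^j(\I^A\aNd_A \tilde f_0)=0$ as required. The second step is to track the weight: because $\al$ lowers homogeneity weight by $2$, the tensor $\al^j \tilde f_0$ is homogeneous of weight $w_0 - 2j$, and this is the weight that must be inserted in place of $w_0$ in the base-case coefficient.

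Performing the substitution, the second coefficient becomes
\begin{equation*}
(k-2j-1)\bigl(d+2(w_0-2j)-(k-2j)-2\bigr)=(k-2j-1)(d+2w_0-k-2j-2),
\end{equation*}
matching the claimed identity exactly; the first term $-\tilde\sigma (\I\cdot\aD)^{k-2j-1}\al^{j+1}\tilde f_0$ is also evident from the substitution. No further computation is needed.

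There is no real obstacle here; the lemma is essentially a bookkeeping exercise packaging the base case for later iteration. The only substantive fact being used beyond that base identity is $\aNd \I =0$, and this is exactly the almost Einstein assumption that was introduced at the beginning of this subsection. Had $\I$ not been parallel, the commutator $[\I^A\aNd_A,\al]$ would produce additional curvature corrections and the clean iterated identity would fail; it is precisely this commutation that makes the induction cost-free.
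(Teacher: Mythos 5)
Your proposal is correct and follows essentially the same route as the paper: the displayed $j=0$ computation (valid under the standing hypothesis $\I^A\aNd_A\tilde f_0=0$) is promoted to general $j$ by substituting $\al^j\tilde f_0$ for $\tilde f_0$, using $[\I^A\aNd_A,\al]=0$ (a consequence of $\aNd\I=0$, which also kills the curvature contractions $\I^A R_{AB}{}^C{}_D$) to preserve the kernel condition, and tracking that $\al$ drops the homogeneity by $2$ so the coefficient becomes $(k-2j-1)(d+2w_0-k-2j-2)$. Your write-up is, if anything, slightly more explicit than the paper's about the weight bookkeeping and the commutator justification.
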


From this we obtain the following result.
\begin{proposition}\label{GJMSprop}
 For $\tilde{f}_0$ any tensor field, of homogeneity weight
  $w_0=\frac{k-d+1}{2}$, along $\tilde{\Sigma}$ we have
$$
(\I\cdot \aD)^k\tilde f_0=
\left\{
\begin{array}{cc}
\big[(k-1)!!\big]^2
(\I^2 \al_{\tilde{\Sigma}})^{\frac k2}\tilde f_0 + { O}(\tilde \sigma)\, , & k \mbox{ even,}\\[2mm]
{ O}(\tilde \si)\, ,& k \mbox{ odd.}
\end{array}
\right. ~,
$$
where $\al_{\tilde{\Sigma}}$ is the induced intrinsic Laplacian of 
$\tilde{\Sigma}$ coupled to the Levi-Civita connection of $\aM$. 
\end{proposition}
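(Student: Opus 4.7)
The plan is to iterate Lemma~\ref{alem} repeatedly, exploiting a decisive simplification that occurs at the critical weight $w_0 = (k-d+1)/2$, and then to identify the terminating expression intrinsically along $\tilde\Sigma$.

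First I substitute the weight $w_0 = (k-d+1)/2$ into the coefficient in Lemma~\ref{alem}: one finds $d+2w_0-k-2j-2 = -2j-1$, so modulo terms proportional to $\tilde\sigma$ the recursion reads
\begin{equation*}
(\I\cdot \aD)^{k-2j}\al^j \tilde f_0 \equiv \I^2 (k-2j-1)(2j+1)\, (\I\cdot \aD)^{k-2j-2}\al^{j+1}\tilde f_0\, .
\end{equation*}
Starting from $j=0$, I iterate upward. For even $k = 2m$, the iteration runs through $j = 0, \ldots, m-1$ and terminates at $\al^{k/2}\tilde f_0$ with accumulated coefficient
$$\I^{2m}\prod_{j=0}^{m-1}(2m-2j-1)(2j+1) = \I^k \big[(k-1)!!\big]^2\, ,$$
since the two factors $(2j+1)$ and $(2m-2j-1)$ independently reshuffle to $(2m-1)!! = (k-1)!!$. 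For odd $k = 2m+1$, the iteration instead terminates at $(\I\cdot \aD)\al^{(k-1)/2}\tilde f_0$. Using the form~\eqref{IdotDamb} of $\I\cdot\aD$ together with the fact that $\aNd_{\I}\tilde f_0 = 0$ is preserved under $\al$ (see below), this equals $-\tilde\sigma\, \al^{(k+1)/2}\tilde f_0$, hence is $O(\tilde\sigma)$, giving the odd case immediately.

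For the even case it remains to identify $\al^{k/2}\tilde f_0$ along $\tilde\Sigma$ with $\al_{\tilde\Sigma}^{k/2}\tilde f_0$. Because $\I$ is parallel, $R_{ABCD}\I^D = [\aNd_A,\aNd_B]\I_C = 0$, and the pair-symmetry of the ambient Riemann tensor gives also $R_{ABCD}\I^A = 0$; consequently $[\aNd_{\I},\aNd_A] = 0$ and $[\aNd_{\I},\al] = 0$ on every tensor field. Combined with the hypothesis $\aNd_{\I}\tilde f_0 = 0$, this forces $\aNd_{\I}\al^j\tilde f_0 = 0$ for all $j \geq 0$. Since $\aNd\I = 0$ also kills the would-be mean-curvature correction, the ambient Laplacian decomposes, on fields annihilated by $\aNd_{\I}$, as $\al = \al_{\tilde\Sigma} + (\I^2)^{-1}(\aNd_{\I})^2$, whose second summand then vanishes. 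Therefore $\al^{k/2}\tilde f_0 \equiv \al_{\tilde\Sigma}^{k/2}\tilde f_0$ modulo $\tilde\sigma$, and because $\I^2$ is a nonzero constant commuting with $\al_{\tilde\Sigma}$, the coefficient $\I^k = (\I^2)^{k/2}$ combines to deliver the advertised $\big[(k-1)!!\big]^2(\I^2\al_{\tilde\Sigma})^{k/2}\tilde f_0$.

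The main obstacle is the final identification of the ambient Laplacian with $\al_{\tilde\Sigma}$ when $\tilde f_0$ is a genuine tensor rather than a scalar: one must check that the tangential part of the ambient Levi-Civita connection restricted to $\tilde\Sigma$ is precisely the connection coupling $\al_{\tilde\Sigma}$ in the statement of the proposition, which ultimately relies on the second fundamental form of $\tilde\Sigma$ in $\aM$ vanishing---a direct consequence of $\I = \aNd\tilde\sigma$ being parallel. Once this geometric fact is secured, the remainder is the combinatorial telescoping of Lemma~\ref{alem} made possible by the fortuitous simplification of its weight-dependent coefficient at the critical weight.
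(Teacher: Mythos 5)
Your argument is correct and is essentially the paper's own proof: the paper likewise iterates Lemma~\ref{alem} (where the critical weight makes the coefficient $d+2w_0-k-2j-2=-2j-1$), telescopes the double factorials, kills the odd case via the vanishing factor $k-2j-1$ at $j=\frac{k-1}{2}$, and then identifies $\al$ with $\al_{\tilde\Sigma}$ along $\tilde\Sigma$ using $\al=\al_{\tilde\Sigma}+\frac{1}{\I^2}\I^A\I^B\aNd_A\aNd_B$ together with $[\aNd_{\I},\al]=0$; you merely write out the induction the paper calls ``obvious.'' The only point worth making explicit is that $\aNd_{\I}\tilde f_0=0$ is not a hypothesis of the proposition but a normalisation one may impose without loss of generality precisely because $(\I\cdot\aD)^k$ acts tangentially along $\tilde\Sigma$, as the paper's proof records in its first line.
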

\begin{proof}
Since $(\I\cdot \aD)^k$ acts tangentially there is no loss of generality in assuming that $\I^A\aNd_A \tilde{f}_0=0$. Then 
 from the Lemma and an obvious induction we obtain 
$$
(\I\cdot \aD)^k\tilde f_0=
\left\{
\begin{array}{cc}
\big[(k-1)!!\big]^2
(\I^2 \al)^{\frac k2}\tilde f_0 + { O}(\tilde \sigma)\, , & k \mbox{ even,}\\[2mm]
{ O}(\tilde \si)\, ,& k \mbox{ odd.}
\end{array}
\right.
$$
But using that along $\tilde{\Sigma}$ we have $\al=
\al_{\tilde{\Sigma}} + \frac{1}{\I^2}\I^A\I^B\aNd_A\aNd_B $ and using
again that $[\I^A\aNd_A,\al]$~$=$~$0$ the result follows immediately.
\end{proof}

We see that for $k$ odd, on an almost Einstein manifold $P_k$ is the
zero operator on $\Sigma$, as claimed in Proposition~\ref{zero}. The statement
there concerning the leading order term of the operator is also an easy
consequence of the calculation above.

Now for simplicity let us assume $I^2=1$.  
As established in
\cite[Theorem 6.3]{Goal} the induced Ricci metric
$\h_{\tilde{\Sigma}}$ on $\tilde{\Sigma}$ is Ricci flat and
$(\tilde{\Sigma},\h_{\tilde{\Sigma}})$ gives a Fefferman--Graham
ambient metric to all orders for the conformal structure
$(\Sigma,c_\Sigma)$. 

Now consider the case where $f_0$ is a density in $\ce[\frac{k-n}{2}]$
and $\tilde{f}_0$ the corresponding homogeneous function on $\aM$. On
the Fefferman--Graham space $(\tilde{\Sigma},\h_{\tilde{\Sigma}})$ the
powers $ \al_{\tilde{\Sigma}}^{\frac{k}{2}}$, for even $k$, are
tangential along $\cG \cap \tilde\Sigma$; for $k\leq n$ these give the
GJMS operators, by definition~\cite{GJMS}.  It follows at once that
for $f_0$ a density in $\ce[\frac{k-n}{2}]$ we have
$$
(-\frac{1}{\I^2}\I\cdot \aD)^k f_0
=(-1)^k\big[(k-1)!!\big]^2 \mathcal{P}_k \tilde f_0\, , \quad k\in \{2,4,\cdots ,n/2 \}
$$ 
where $\mathcal{P}_k$ is the order $k$ 
GJMS operator. This establishes Theorem~\ref{tanein} for $d\geq 3$ odd. 

For $d\geq 4$ even all the above goes through, except that the usual
construction as in~\cite{FGast}, for the Fefferman--Graham ambient
metric is obstructed at finite order, as stated earlier. So also is
the ``harmonic'' extension of $\asi$ off $\cG$, we may obtain for
example that $0=\al \asi=\al^2 \asi= \cdots = \al^{d/2} \asi$ along~$\cG$, but at
the next order the problem is potentially obstructed (and
$\al^{d/2+1}$ depends on $c$ beyond the order of the obstruction).
However by a straightforward counting of derivatives it is 
verified that we recover the usual GJMS operators to at least the
order claimed in Theorem~\ref{tanein}.


\begin{thebibliography}{XX}






%\bibitem{Albin} P.\ Albin, {\em Renormalizing Curvature Integrals on
%    Poincare-Einstein Manifolds}, Adv.\ Math.\  {\bf 221} (2009), 
%  140--169.





%\bibitem{AlekSb} D.\ Alekseevski, {\em Groups of conformal
%transformations of Riemannian spaces}, Math.\ USSR Sbornik, {\bf 18}
%(1972), 285--301.



%\bibitem{AM} S.\ Alexakis, and R.\ Mazzeo, {\em Renormalized area and
%    properly embedded minimal surfaces in hyperbolic 3-manifolds},
 %arXiv:0802.2250

\bibitem{AdSCFTreview}
  O.\ Aharony, S.S.\ Gubser, J.M.\ Maldacena {\it et al.},
  {\em Large $N$ field theories, string theory and gravity},
  Phys.\ Rept.\  {\bf 323}, 183-386 (2000).
  arXiv:hep-th/9905111.

\bibitem{AG-BGops} E.\ Aubry, C.\ Guillarmou, {\em Conformal harmonic
    forms, Branson-Gover operators and Dirichlet problem at infinity},
arXiv:0808.0552.

%\bibitem{AndL2} M.\ Anderson, {\em $L\sp 2$ curvature and volume
%renormalization of AHE metrics on 4-manifolds},  Math.\ Res.\ Lett.\  {\bf 8}
%(2001), no. 1-2, 171--188.


%\bibitem{AndReg} M.T.\ Anderson, {\em Boundary regularity, uniqueness
%    and non-uniqueness for AH Einstein metrics on 4-manifolds}, Adv.\
%  Math.\ {\bf 179} (2003), 205--249.


%\bibitem{AndAdS} M.T.\ Anderson, {\em Geometric aspects of the AdS/CFT
 %   correspondence}, in: AdS/CFT correspondence: Einstein metrics and
%  their conformal boundaries, 1--31, IRMA Lect. Math. Theor. Phys., 8,
 % Eur. Math. Soc., Z\"urich, 2005.

%\bibitem{AndPresc} M.T.\ Anderson, {\em Einstein metrics with
%    prescribed conformal infinity on 4-manifolds}, GAFA, to appear,
%  arXiv:math/0105243.

%\bibitem{Armcomplete} S.\ Armstrong, {\em Definite signature conformal
%    holonomy: a complete classification}, J.\ Geom.\ Phys.\  {\bf 57} (2007),
 %  2024--2048.

\bibitem{BEG} T.N.\ Bailey, M.G.\ Eastwood, and A.R.\ Gover, {\em
    Thomas's structure bundle for conformal, projective and related
    structures}, Rocky Mountain J.\ Math.\ {\bf 24} (1994),
  1191--1217.

%\bibitem{Besse} A.L.\ Besse, Einstein manifolds, Ergebnisse der
%  Mathematik und ihrer Grenzgebiete (3), 10, Springer-Verlag, Berlin,
%  xii+510pp, 1987.


%\bibitem{B} O.\ Biquard, M\' etriques d'Einstein asymptotiquement
%sym\' etriques,
%(French) [Asymptotically symmetric Einstein metrics]
%Ast\' erisque, No. 265 (2000), vi+109 pp.

\bibitem{tomsharp}
T.P.~Branson, {\em Sharp inequalities, the functional determinant, and the complementary series},
Trans. Amer. Math. Soc. {\bf 347} (1995), 3671--3742.




\bibitem{BrGoOps} T. Branson, A.R. Gover, {\em Conformally invariant
    non-local operators}, Pacific J. Math. {\bf 201} (2001), 19--60.

\bibitem{BrGodeRham} T.~Branson, A.R.~Gover, {\em Conformally
    invariant operators, differential forms, cohomology and a
    generalisation of Q-curvature}, Comm. Partial Differential
  Equations {\bf 30} (2005), 1611--1669.

\bibitem{BrGoorigins}  T.~Branson, A.R.~Gover, {\em Origins, applications and generalisations of the $Q$-curvature}, 
Acta Appl.\ Math., {\bf 102} (2008), 131–146. 

%\bibitem{Brinkmann} H.W. Brinkmann, {\em Einstein spaces which are mapped 
%conformally on each other}. Math.\ Ann.\ {\bf 94}, (1925) 119--145.

%\bibitem{Cald} D.\ Calderbank, {\em M\"obius structures and
%    two-dimensional Einstein-Weyl geometry}, J.\ Reine Angew.\ Math.\
%  {\bf 504} (1998), 37--53. 

\bibitem{Sagnotti}
A.~Campoleoni, D.~Francia, J.~Mourad and A.~Sagnotti, 
{\em Unconstrained Higher Spins of Mixed Symmetry. I. Bose Fields,} 
Nucl. Phys. B {\bf 815}, (2009) 289--367,  arXiv:0810.4350.



%\bibitem{Capinf} A.\ \v{C}ap, {\em Infinitesimal Automorphisms and
 %   Deformations of Parabolic Geometries}, J.\ Eur.\ Math.\ Soc.\ {\bf
 %   10}, (2008) 415-437.

%\bibitem{CapGoluminy} A.\ \v Cap, and A.R.\ Gover, 
%{\em Tractor bundles for irreducible parabolic geometries},  Global
%  analysis and harmonic analysis (Marseille-Luminy, 1999), 129--154,
%  S\'emin.\ Congr., 4, Soc.\ Math.\ France, Paris, 2000.




\bibitem{CapGoTAMS} A.\ \v Cap, and A.R.\ Gover, {\em Tractor calculi
    for parabolic geometries}, Trans.\ Amer.\ Math.\ Soc.\ {\bf 354}
  (2002), 1511--1548.

\bibitem{CapGoamb} A.\ \v Cap, and A.R.\ Gover, {\em Standard
     tractors and the conformal ambient metric construction},  Ann.\
     Global Anal.\ Geom.\  {\bf 24} (2003), 231--295.
     
     
   \bibitem{CapGoirred} A.\ \v Cap, and A.R.\ Gover, {\em Tractor
       bundles for irreducible parabolic geometries.  Global analysis
       and harmonic analysis} S\'emin. Congr. {\bf 4}, 129, Soc. Math.
     France 2000.

%   \bibitem{CapGoF} A.\ \v Cap, and A.R.\ Gover, {\em CR-Tractors and
%       the Fefferman Space},  Indiana University Mathematics Journal, {\bf 57}
%  (2008), 2519--2570, \quad arXiv:math/0611938

%\bibitem{CQY} A.\ Chang, J.\ Qing, and P.\ Yang, {\em Renormalized
%    volumes for conformally compact Einstein manifolds}, (Russian)
%  Sovrem. Mat. Fundam. Napravl.  {\bf 17} (2006), 129--142;
%  translation in J. Math. Sci. (N. Y.)  {\bf 149} (2008), 1755--1769.
%  \quad arXiv:math/0512376.


%\cite{Cherney:2010xh}
\bibitem{Cherney:2010xh}
  D.~Cherney, E.~Latini and A.~Waldron,
  {\em Quaternionic Kaehler Detour Complexes \& N=2 Supersymmetric Black Holes,}
  Commun.\ Math.\ Phys.\  {\bf 302}, (2011) 843--873, 
  arXiv:1003.2234.
  %%CITATION = CMPHA,302,843;%%
%\cite{Cherney:2009md}

\bibitem{Cherney:2009md}
  D.~Cherney, E.~Latini and A.~Waldron,
  {\em Generalized Einstein Operator Generating Functions,}
  Phys.\ Lett.\  B {\bf 682}, (2010) 472--475, 
  arXiv:0909.4578.
  %%CITATION = PHLTA,B682,472;%%
%\cite{Cherney:2009mf}

\bibitem{Cherney:2009mf}
  D.~Cherney, E.~Latini and A.~Waldron,
  {\em BRST Detour Quantization,}
  J.\ Math.\ Phys.\  {\bf 51}, (2010) 062302, 
  arXiv:0906.4814.
  %%CITATION = JMAPA,51,062302;%%
%\cite{Cherney:2009vg}

\bibitem{Cherney:2009vg}
  D.~Cherney, E.~Latini and A.~Waldron,
  {\em (p,q)-form Kahler Electromagnetism,}
  Phys.\ Lett.\  B {\bf 674}, (2009) 316--318, 
  arXiv:0901.3788.
  %%CITATION = PHLTA,B674,316;%%

\bibitem{cherrier} P.\ Cherrier, {\em Probl\`emes de Neumann non
lin\'eaires sur les vari\'et\'es riemanniennes}, J.\ Funct.\ Anal.\
{\bf 57} (1984), 154--206.

%\cite{de Boer:1999xf}
\bibitem{de Boer:1999xf}
  J.~de Boer, E.~P.~Verlinde and H.~L.~Verlinde,
  {\em On the holographic renormalization group,}
  JHEP {\bf 0008}, (2000) 003,

%\bibitem{DML} A.\ Derdzinski, and G.\ Maschler, {\em Local
% classification of conformally-Einstein K\" ahler metrics in higher
% dimensions}, Proc.\ London Math.\ Soc.\ (3) {\bf 87} (2003),
% 779--819. 

%\bibitem{DM} A.\ Derdzinski, and G.\ Maschler, {\em Special
%    K\"ahler-Ricci potentials on compact K\"ahler manifolds}, J.\ Reine
%  Angew.\ Math., {\bf 593} (2006), 73--116.


\bibitem{Diaz}
D.~Diaz, {\em Polyakov formulas for GJMS operators from AdS/CFT}
JHEP {\bf 0807} (2008), 103.


\bibitem{de Haro}
de Haro, S., Skenderis, K. and Solodukhin, S. N.,
{\em Holographic Reconstruction of Spacetime and Renormalization in the AdS/CFT Correspondence},
Commun. Math. Phys. {\bf 217} (2001), 595--622.

%\bibitem{Esrni} Eastwood, M.G.,\ {\em Notes on conformal differential
 % geometry},  
 % Supp.\ Rend.\ Circ.\ Matem.\ Palermo, Ser.\ II, Suppl.,\ {\bf 43},
 % (1996), 57--76.


\bibitem{FGast} C.\ Fefferman, and C.R.\ Graham, {\em Conformal
    invariants} in: The mathematical heritage of \'{E}lie Cartan (Lyon,
  1984).  Ast\' erisque 1985, Numero Hors Serie, 95--116. 

\bibitem{FGrQ}C.\ Fefferman, and C.R.\ Graham, {\em $Q$-curvature and
    Poincar\'e metrics}, Math.\ Res.\ Lett.\ {\bf 9} (2002), 139--151.


\bibitem{FGrnew} C.\ Fefferman, and C.R.\ Graham, The Ambient Metric,
  Princeton University Press, to appear. \quad arXiv:0710.0919.

\bibitem{FH} C.\ Fefferman, and K.\ Hirachi,  {\em Ambient metric construction of Q-curvature in conformal and CR geometries},
Math. Res. Lett. {\bf 10} (2003), 819--832.


%\bibitem{Fe2} J.\ Ferrand, {\em The action of conformal
%    transformations on a Riemannian manifold}, Math.\ Ann., {\bf 304}
 % (1996), 277--291.



%\bibitem{Gallot} S.\ Gallot, {\em \'{E}quations diff\'{e}rentielles
 %   caract\'{e}ristiques de la sph\`{e}re}, Ann.\ Sci.\ \'{E}cole Norm.\ Sup.\
 %   (4) {\bf 12} (1979),  235--267.




%\bibitem{GoSrni99} A.R. Gover, {\em  Aspects of parabolic
%  invariant theory}, Rend.\ Circ.\ Mat.\ Palermo (2) Suppl.\  No. 59 (1999),
%  25--47.

%\bibitem{Goinvariant}
%A.R.~Gover, {\em Invariant theory and calculus for conformal geometries}, 
%Adv. Math. {\bf 163} (2001), 206--257.

\bibitem{Giombi}
S. Giombi and X. Yin,
{\em Higher Spin Gauge Theory and Holography: The Three-Point Functions},
JHEP {\bf 1009}  (2010), 115.


\bibitem{GoPrague} A.R. Gover, {\em Almost conformally Einstein
    manifolds and obstructions}, in Differential geometry and its
  applications, 247--260, Matfyzpress, Prague, 2005. \quad
  arXiv:math/0412393


\bibitem{powerslap} A.R. Gover, {\em Laplacian operators and $Q$-curvature
  on conformally Einstein manifolds}, Math.\ Ann. {\bf 336} (2006),
  311--334. 

\bibitem{GoIP} A.R. Gover, {\em Conformal Dirichlet-Neumann Maps and
    Poincar\'e-Einstein Manifolds}, SIGMA Symmetry Integrability Geom.\
  Methods Appl.\  {\bf 3} (2007), Paper 100, 21 pp.

\bibitem{Goal} A.R. Gover, {\em Almost Einstein and
    Poincar\'e-Einstein manifolds in Riemannian signature},   J.\
    Geometry and Physics, {\bf 60} (2010), 182--204. \quad   arXiv:0803.3510


%\bibitem{GLsub} A.R. Gover, and F.\ Leitner, {\em A sub-product
%  construction of Poincare-Einstein metrics}, International J.\ Math.,
 % to appear. \quad  arXiv:math/0608044

\bibitem{GLW}
A.R.~Gover, E.~Latini and A.~Waldron,
{\em Poincar\'e--Einstein holography for forms via conformal geometry in the bulk},
arXiv:1205.3489.

\bibitem{GoNur}
A.R.~Gover and P.~Nurowski,
{\em Obstructions to conformally Einstein metrics in n dimensions},
J. Geom. Phys. {\bf 56} (2006),  45--484. 

\bibitem{GoPetCMP} A.R.\ Gover, and L.\ Peterson, {\em Conformally
    invariant powers of the Laplacian, $Q$-curvature, and tractor
    calculus} Comm.\ Math.\ Phys.\  {\bf 235} (2003),  339--378. 

\bibitem{GoPetobstrn} 
A.R.~Gover, L.J.~Peterson, Lawrence,
{\em The ambient obstruction tensor and the conformal deformation complex},
Pacific J. Math. {\bf 226} (2006), 309--351. 



\bibitem{GSHI}
A.R.~Gover, A.~Shaukat and A.~Waldron, {\em
Weyl Invariance and the Origins of Mass},
Phys. Lett. {\bf B675} (2009), 93-97.

\bibitem{GSHII}
A.R.~Gover, A.~Shaukat and A.~Waldron, {\em Tractors, mass, and Weyl invariance}
 Nucl. Phys. {\bf B812} (2009), 424--455.


%\bibitem{GoSilKill} A.R.\ Gover, and J.\ \v Silhan, {\em The conformal
%    Killing equation on forms -- prolongations and applications}, {
%    Differential Geometry and Its Applications}, {\bf 26} (2008), 244--266.

\bibitem{GrVol} C.R.~Graham, {\em Volume and area renormalizations for
  conformally compact Einstein metrics}, Rend. Circ. Mat. Palermo (2) Suppl. No. {\bf 63} (2000), 31--42.

\bibitem{GJMS}
C.R.~Graham, R.~Jenne, Ralph, L.~Mason and G.~Sparling,
{\em Conformally invariant powers of the Laplacian. I. Existence}, 
J. London Math. Soc. (2) {\bf 46} (1992), 557--565.


%\bibitem{GrH1} C.R.\ Graham, and K.\ Hirachi, {\em The ambient
%    obstruction tensor and $Q$-curvature},  AdS/CFT correspondence:
%    Einstein metrics and their conformal boundaries, 59--71, IRMA
%    Lect.\ Math.\ Theor.\ Phys., 8, Eur. Math. Soc., Zürich, 2005.
    

\bibitem{GrJ} C.R.\ Graham, and A.\ Juhl, 
{\em Holographic formula for $Q$-curvature},
Adv.\ Math.\ {\bf 216} (2007),  841--853. 



\bibitem{GrL} C.R.\ Graham, and J.M.\ Lee, {\em Einstein metrics with
    prescribed conformal infinity on the ball}, Adv.\ Math.\ {\bf 87}
  (1991), 186--225.

\bibitem{GW} C.R.\ Graham, and E.\ Witten, {\em Conformal anomaly of
    submanifold observables in AdS/CFT correspondence}, Nuclear Phys.
  B 546 (1999), no. 1-2, 52--64.

\bibitem{GrZ} C.R.\ Graham, and M.\ Zworski, {\em Scattering matrix in
    conformal geometry}, Invent.\ Math.\ {\bf 152} (2003), 89--118.


\bibitem{Gubser}
  S.S.\ Gubser, I.R.\ Klebanov, A.M.\ Polyakov,
  {\em Gauge theory correlators from noncritical string theory},
  Phys.\ Lett.\  {\bf B428}, 105-114 (1998).
  arXiv:hep-th/9802109.

%\bibitem{Guill} C.\ Guillarmou, {\em Generalized Krein formula and
%    determinants for even dimensional Poincare-Einstein manifolds},
%  arXiv:math/0512173.

\bibitem{GuilN} C.\ Guillarmou, Fr\'ed\'ric Naud, {\em Wave decay on
    convex co-compact hyperbolic manifolds}, Comm.\ Math.\ Phys.\ {\bf
    287} (2009), 489--511.



\bibitem{Henningson}
  M.\ Henningson, K.\ Skenderis,
  {\em The Holographic Weyl anomaly},
  JHEP {\bf 9807}, 023 (1998),
  arXiv:hep-th/9806087.

\bibitem{HPT} P. Hislop, P. Perry and S. Tang
  {\em CR-invariants and the scattering operator for complex manifolds with
  boundary},   Anal. PDE {\em 1} (2008), 197--227.

\bibitem{Juhl}
A. Juhl, 
Families of conformally covariant differential operators, Q-curvature and holography, 
Progress in Mathematics, {\bf 275}, Birkh\"auser Verlag  2009. 

%\bibitem{KobNom} S.\ Kobayashi and K.\ Nomizu, Foundations of
%  differential geometry, Vol I, Interscience Publishers, a division of
%  John Wiley \& Sons, New York-Lond on 1963 xi+329 pp.

%\bibitem{KRI} W.\ K\"uhnel and H.-B.\ Rademacher, {\em Essential
 %   conformal fields in pseudo-Riemannian geometry},  J.\ Math.\ Pures
 %   Appl.\ (9) {\bf 74} (1995),  453--481.

%\bibitem{KRII} W.\ K\"uhnel and H.-B.\ Rademacher, {\em Essential
 %   conformal vector fields in pseudo-Riemannian geometry. II}, J.\
%  Math.\ Sci.\ Univ.\ Tokyo, {\bf 4} (1997), 649--662.

\bibitem{Klebanov}
I. R.~Klebanov and A. M. Polyakov, {\em AdS dual of the critical O(N) vector model,}
Phys. Lett. {\bf B 550} (2002), 213.

\bibitem{LeBrunH}
C.R.~LeBrun, {\em H-space with a cosmological constant}, Proc. Roy. Soc. London Ser. {\bf A
380} (1982), 171--185.


%\bibitem{L} J.M.\ Lee, {\em Fredholm operators and Einstein metrics on
%    conformally compact manifolds}, Mem.\ Amer.\ Math.\ Soc.\ {\bf
%    183} (2006),  vi+83 pp. 

%\bibitem{Lspec} J.M.\ Lee, 
%{\em The spectrum of an asymptotically hyperbolic Einstein manifold}, 
%Comm.\ Anal.\ Geom.\ {\bf 3} (1995),  253--271. 


%\bibitem{LeitNorm}  F.\ Leitner, 
%{\em Conformal Killing forms with
%    normalisation condition}, Rend.\ Circ.\ Mat.\ Palermo (2) Suppl.
 % No. 75 (2005), 279--292, \quad math.DG/0406316.


%\bibitem{LMZ} F.\ Leitner, {\em Leitner, Felipe On transversally
%    symmetric pseudo-Einstein and Fefferman-Einstein spaces}, Math.\
%  Z.\ {\bf 256} (2007), 443--459.

%\bibitem{LeitNCK}  F.\ Leitner, 
%{\em Normal Conformal Killing Forms},  arXiv:math/0406316.



\bibitem{Mal} J.\ Maldacena {\em The large $N$ limit of superconformal
    field theories and supergravity},  Adv.\ Theor.\ Math.\ Phys.\  {\bf 2}
    (1998), 231--252.

\bibitem{Manvelyan}
R. Manvelyan, K. Mkrtchyan and Ruben Mkrtchyan
{\em Conformal invariant powers of the Laplacian, Fefferman-Graham ambient metric and Ricci gauging},
Phys.Lett. {\bf B657} (2007), 112-119.

  \bibitem{Ma-hodge} R.~Mazzeo, {\em The Hodge cohomology of a
      conformally compact metric}, J.\ Differential Geom., {\bf 28}
    (1988), 309--339.

\bibitem{Ma-unique} R.~Mazzeo, {\em Unique continuation at infinity and
    embedded eigenvalues for asymptotically hyperbolic manifolds},
    Amer.\ J.\ Math.,  {\bf 113} (1991), 25--45.

  \bibitem{MaMe} R.~Mazzeo and R. ~Melrose, {\em Meromorphic extension
      of the resolvent on complete spaces with asymptotically constant
      negative curvature}, J. Funct. Anal. {\bf 75} (1987), 260--310.


  \bibitem{MSV} R.~Melrose, A.\  S\'a Barreto, A.\ Vasy, {\em Asymptotics
      of solutions of the wave equation on de Sitter-Schwarzschild
      space}, arXiv:0811.2229

%\bibitem{MP} R.\ Mazzeo, and F.\ Pacard, {\em Maskit combinations of
%    Poincar\'e-Einstein metrics}, Adv.\ Math.\ {\bf 204} (2006),
%  379--412.

%\bibitem{Mazzeo-Taylor} R.\ Mazzeo, and M.\ Taylor, {\em
%Curvature and uniformization},
%Israel J.\ Math.\ {\bf 130} (2002), 323--346. 

%\bibitem{ONeil}   B.\ O'Neill, Semi-Riemannian geometry. With
%applications to relativity. Pure and Applied Mathematics, 103. Academic
%Press, Inc., New York, 1983.



\bibitem{Papadimitriou}
  I.~Papadimitriou, K.~Skenderis,
  {\em AdS/CFT correspondence and geometry},
  arXiv:hep-th/0404176.
  
\bibitem{ot} R.\ Penrose, W.\ Rindler, Spinors and
  space-time. Vol. 1. Two-spinor calculus and relativistic
  fields. Cambridge Monographs on Mathematical Physics. Cambridge
  University Press, Cambridge, 1984. 

\bibitem{Perry} P.A.\ Perry,  {\em The Laplace operator on a
  hyperbolic manifold. II. Eisenstein series and the scattering
  matrix},  J.\ Reine Angew.\ Math.\  {\bf 398} (1989), 67--91.

\bibitem{Sasaki} Sasaki, S., {\em On the spaces with normal
    conformal connexions whose groups of holonomy fix a point or a
    hypersphere II},  Jap.\ J.\ Math.\  {\bf 18}, (1943) 623--633.


 \bibitem{SH}
  A.~Shaukat,  {\em Unit invariance as a unifying principle of physics}, Thesis (Ph.D.)--University of California, Davis, 2010. ISBN: 978-1124-02577-3
  
  \bibitem{SHII}
  A.~ Shaukat and A.~Waldron, {\em Weyl's gauge invariance: conformal geometry, spinors, supersymmetry, and interactions}, Nucl. Phys. {\bf B829} (2010), 28--47.

\bibitem{T} T.Y.\ Thomas, {\em  On conformal geometry},
  Proc.\ Natl.\ Acad.\ Sci.\ USA
  {\bf 12}, 352--359 (1926)

\bibitem{Vasy} A.\ Vasy, {\em The wave equation on asymptotically de
    Sitter-like spaces}, Adv.\ Math., {\bf 223} (2010), 49--97.

%\bibitem{Wang}  X.\ Wang, 
%{\em On conformally compact Einstein manifolds},
%Math.\ Res.\ Lett.\ {\bf 8} (2001), 671--688. 

\bibitem{Witten} E.\ Witten {\em  Anti de Sitter space and holography},  
Adv.\  Theor.\  Math.\  Phys.\ {\bf 2}  (1998),  
    253--291.

%\bibitem{Yoshi} Y.\ Yoshimatsu, {\em On a theorem of Alekseevskii 
%concerning conformal transformations},
%J.\ Math.\ Soc.\ Japan,  {\bf 28} (1976), 278--289. 


\end{thebibliography}
\end{document}